\newcommand{\cC}{\mathcal{C}}
\newcommand{\cH}{\mathcal{H}}
\newcommand{\cK}{\mathcal{K}}
\newcommand{\cM}{\mathcal{M}}
\newcommand{\cS}{\mathcal{S}}
\newcommand{\ft}{\mathfrak{t}}
\newcommand{\R}{\mathbb R}
\newcommand{\Z}{\mathbb Z}
\newcommand{\genus}{\mathrm {genus}}
\newcommand{\Ent}{\mathrm {Ent}}
\newcommand{\length}{\mathrm{length}}
\newcommand{\dist}{\mathrm{dist}}
\renewcommand{\index}{\mathrm{index}}
\newcommand{\diam}{\mathrm{diam}}
\renewcommand{\tilde}{\widetilde}
\newcommand{\rank}{\mathrm{rank}}
\newcommand{\link}{\mathrm{link}}
\newcommand{\x}{\times}
\newcommand{\ins}{\mathrm{in}}
\newcommand{\out}{\mathrm{out}}
\newcommand{\sphere}{\mathrm{sphere}}
\newcommand{\neck}{\mathrm{neck}}
\newcommand{\loc}{\text{loc}}
\newtheorem{thm}{Theorem}[section]
\newtheorem{cor}[thm]{Corollary}
\newtheorem{prop}[thm]{Proposition}
\newtheorem{lem}[thm]{Lemma}
\newtheorem{conj}[thm]{Conjecture}
\theoremstyle{definition}
\newtheorem{defn}[thm]{Definition}
\newtheorem{exmp}[thm]{Example}
\newtheorem{rmk}[thm]{Remark}
\newtheorem{oqn}[thm]{Question}
\title{Genus one singularities in mean curvature flow}
\author{Adrian Chun-Pong Chu}
\author{Ao Sun}
\address{The University of Chicago, Department of Mathematics, Eckhart Hall,
	5734 S University Ave,
	Chicago, IL, 60637
	\\
	Current Address: Cornell University, Malott Hall, 212 Garden Ave, Ithaca, NY 14853}
\email{cc2938@cornell.edu}
\address{Lehigh University, Department of Mathematics, Chandler-Ullmann Hall, Bethlehem, PA 18015}
\email{aos223@lehigh.edu}
\begin{document}
	\maketitle

	\begin{abstract}
		We show that for certain one-parameter families of initial conditions in $\mathbb R^3$, when we run mean curvature flow, a genus one singularity must appear in one of the flows. Moreover, such a singularity is robust under perturbation of the family of initial conditions. This contrasts sharply with the case of just a single flow.
		As an application, we construct an embedded, genus one self-shrinker with entropy lower than a shrinking doughnut.
	\end{abstract}
	
	\section{Introduction}\label{sect_intro}
	
	Mean curvature flow (MCF) is the most rapid process to decrease the area of a surface. With an initial motivation from applied science, this geometric evolution equation has gained much interest recently due to its potential for studying the geometry and topology of surfaces embedded in three-manifolds. As a nonlinear geometric heat flow, MCF may have singularities, which may lead to changes in the geometry and topology of the surfaces.
	
	The blow-up method, pioneered by Huisken \cite{Huisken90}, Ilmanen \cite{Ilmanen95_Sing2D}, and White \cite{White97_Stratif}, shows that the singularities are modeled by a special class of surfaces called \emph{self-shrinkers}. They satisfy the equation $\vec{H}+\vec{x}^\perp/2=0$. 
	Determining the possible singularity models that can arise in an arbitrary MCF is a challenging problem. With the convexity assumption, Huisken \cite{Huisken86} proved that the singularities must be modeled by spheres. With the mean convexity assumption, White \cite{White97_Stratif, White00, White03} proved that the singularities must be modeled by spheres and cylinders. 
	However, in the absence of curvature assumptions, the question of which type of singularities must arise in MCF remains widely open. In this paper, we find a condition that guarantees the appearance of a singularity modeled by a genus one self-shrinker. To the best of our knowledge, this is the first resultthat produces a singularity, that appears in a non-self-shrinking flow and is modeled by a self-shrinker of non-zero genus.

	\begin{figure}[h]
		\centering
		\makebox[\textwidth][c]{\includegraphics[width=5.5in]{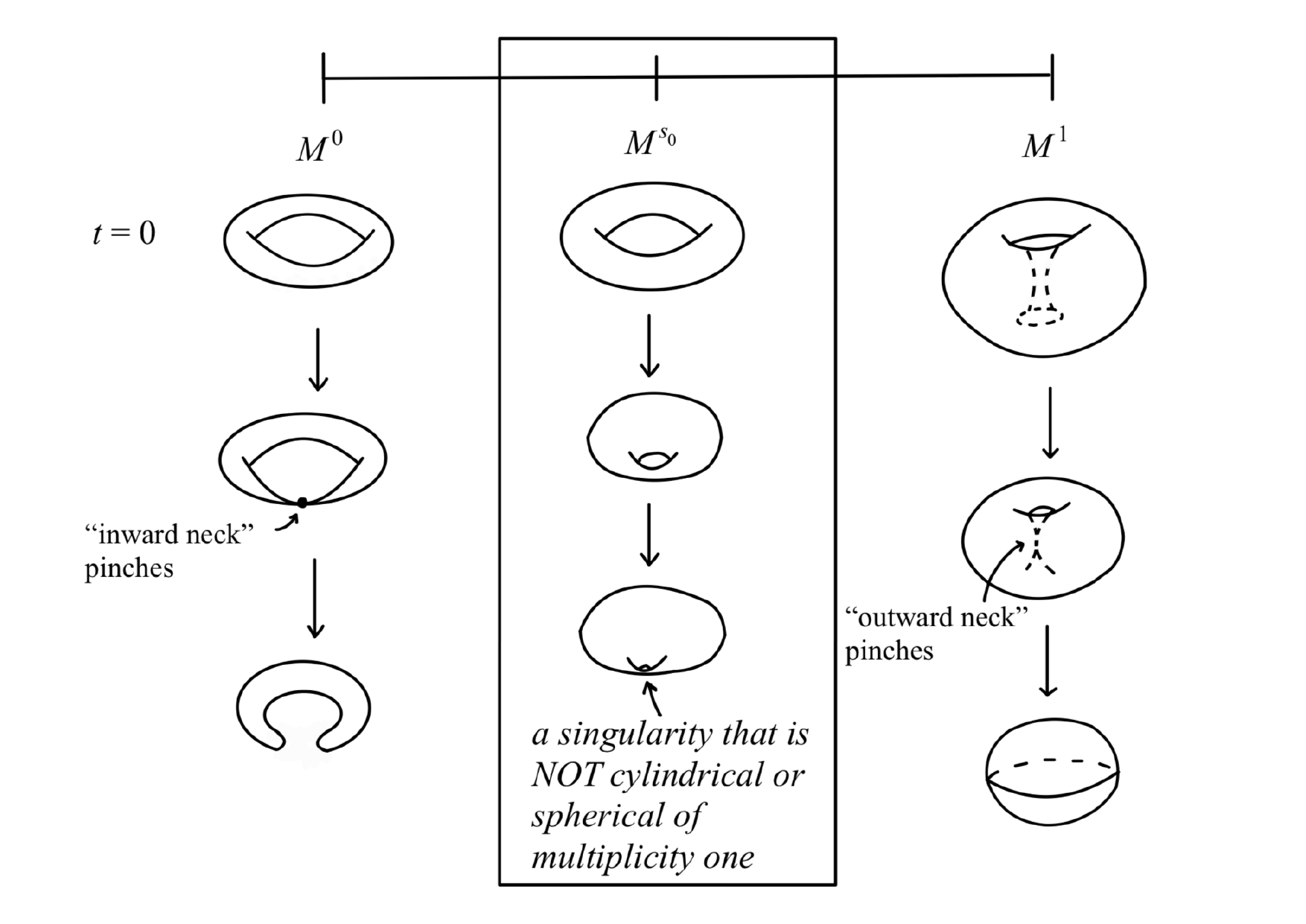}}
		\caption{}
		\label{fig:first_pic}
	\end{figure}

	Let us first explain the heuristics, which involves an interpolation argument. In Figure \ref{fig:first_pic}, we have a one-parameter family $\{M^s\}_{s\in[0,1]}$ of tori in the top row. Suppose that the initial torus $M^0$ has a thin ``inward neck,'' which will eventually pinch under the MCF. On the other hand, the final torus $M^1$ has a thin ``outward neck'' in the middle, which will also pinch under MCF. Then, there should exist a critical value $s_0\in[0,1]$ such that for the torus $M^{s_0}$,  both the inward and outward necks pinch under MCF, giving rise to a genus one singularity.

	The following is our main theorem. We will provide a precise definition of ``inward (or outward) torus neck will pinch" later in Definition \ref{defn_inward_neck_pinch}.

	\begin{thm}\label{thm:main1}
		Let $\{M^s\}_{s\in [0,1]}$ be a smooth family of tori in $\R^3$ such that for the MCF  starting from $M^0$ (resp. $M^1$), the inward (resp. outward) torus neck will pinch. Then there exists $s_0\in [0,1]$ such that the MCF starting from  $M^{s_0}$ would develop a singularity that is not multiplicity one cylindrical or multiplicity one spherical.
	\end{thm}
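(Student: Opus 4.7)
The plan is to argue by contradiction. Suppose that for every $s\in[0,1]$ the MCF starting from $M^s$ develops only multiplicity one cylindrical and multiplicity one spherical singularities. Under this \emph{generic singularity} hypothesis, the flow should admit a canonical continuation (for instance as a level set flow, or via Brakke flow with uniqueness) with good topological control: each singular time strictly increases Euler characteristic by $2$ per neckpinch, so starting from a torus, the first singular time either performs an \emph{inward} neckpinch (converting the torus into a topological sphere) or an \emph{outward} neckpinch (converting the torus into two topological spheres). Here the distinction between inward and outward should be recoverable from Definition~\ref{defn_inward_neck_pinch} by looking at which connected component of $\R^3\setminus M^s_t$ (the bounded ``inside'' or the unbounded ``outside'') changes topology at the pinch.

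Granting this, I would define
\[
A=\{s\in[0,1] : \text{the flow from } M^s \text{ undergoes an inward torus neck pinch}\},
\]
\[
B=\{s\in[0,1] : \text{the flow from } M^s \text{ undergoes an outward torus neck pinch}\},
\]
using the definition from Section~2 of the paper. The hypothesis gives $0\in A$ and $1\in B$, and I would show $A\cap B=\emptyset$: under the generic-singularity assumption, a single flow cannot realize both pinch types, because that would force a singularity modeled on a genus-one shrinker (the two necks either meet at one spacetime point, or after the first pinch the remaining component is a sphere and can produce no further torus neck).

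The crux is then to prove $A$ and $B$ are both \emph{open} in $[0,1]$, yielding the desired contradiction with the connectedness of $[0,1]$. For this I would use the stability theory of cylindrical singularities: if the flow from $M^{s_*}$ has only multiplicity one cylindrical and spherical singularities, then by work of Colding--Minicozzi on the rigidity and uniqueness of cylindrical tangent flows, combined with Brakke regularity and pseudolocality, nearby flows $M^s_t$ (for $s$ close to $s_*$) converge smoothly to $M^{s_*}_t$ away from the singular set, and the singular set varies continuously. Consequently the topological effect of the first pinch — inward versus outward — is locally constant in $s$. This propagates through subsequent generic singular times as well, so membership in $A$ (or $B$) is an open condition.

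The main obstacle will be this openness step, and specifically handling the case where $M^{s_*}$ has several simultaneous pinches or a long, delicate chain of singularities: one needs a quantitative continuous dependence of the generic MCF on initial data, strong enough to track the \emph{type} (inward vs outward) of every torus neckpinch. I expect this to rely on the one-sided minimization / mean convex neighborhood theorems of Choi--Haslhofer--Hershkovits (and the non-fattening of level set flow through generic singularities), which guarantee that a cylindrical singularity at spacetime $(x_0,t_0)$ is surrounded by a spacetime neighborhood on which the flow is a smooth standard neckpinch, a structure which persists under $C^\infty$ perturbation of initial data.
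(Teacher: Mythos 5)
Your high-level skeleton (contradiction, partition $[0,1]=A\sqcup B$, connectedness) matches the paper's, but there are two substantive gaps in your route, and I'll flag both.

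\textbf{Openness versus closedness.}
You propose to show $A$ and $B$ are \emph{open}, appealing to a ``stability theory of cylindrical singularities'' so that the type (inward/outward) of every pinch persists under perturbation of initial data. You correctly anticipate that the hard part is handling simultaneous pinches and chains of singularities, but that difficulty is real and you do not resolve it: as $s\to s_*$, singularities can merge, change order, or sit on a positive-dimensional set (e.g.\ a circle of inward neck singularities), and no off-the-shelf theorem guarantees that ``which pinch happens first and what type it is'' varies continuously. The paper sidesteps this entirely by proving $A$ and $B$ are \emph{closed} instead. The mechanism is a one-sided semicontinuity: for each homology class $c_0\in H_1$ of the complement, the termination time satisfies $\liminf_i \ft^{\cM^i}(c_0)\geq\ft^{\cM}(c_0)$ when $M^i(0)\to M(0)$ (Proposition~\ref{prop_termination_time_liminf}), which follows simply from Hausdorff convergence of the spacetime tracks. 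Combined with the fact that the termination time $T^s=\min\{\ft(a_0),\ft(b_0)\}$ coincides with the time at which the genus of $M^s(t)$ drops (Theorem~\ref{thm_terminate_time_change_genus}), and a.e.-time $C^\infty$ convergence of $M^{s_i}(t)\to M^s(t)$ (Proposition~\ref{lem_ae_time_smooth_conv}), one gets $T^{s_i}\to T^s$, and then $T^s=\liminf T^{s_i}=\liminf\ft^{\cM^{s_i}}(a_0)\geq\ft^{\cM^s}(a_0)$ forces $s\in A$. This $\liminf$ argument is strictly weaker (hence more robust) than the two-sided continuity you would need for openness, and it avoids tracking individual singularities altogether.

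\textbf{The dichotomy $A\cap B=\emptyset$.}
You assert that a single flow from a torus cannot undergo both an inward and an outward torus pinch ``because that would force a genus-one shrinker,'' but this is a heuristic, not a proof, and in particular it does not rule out the two pinches occurring at the \emph{same time} at distinct points. The paper handles this with considerable machinery: it first shows that a class that terminates must break at an actual cylindrical singularity (Theorem~\ref{thm_exist_break_point}, via the refined canonical-neighborhood description of necks), then proves $\ft(a_0)\neq\ft(b_0)$ by exhibiting, from the breaking inward neck, an explicit outside loop linked with $a_0(t)$ that survives past $\ft(a_0)$ (Proposition~\ref{prop_cannot_terminate_same_time}), and finally proves that once the earlier class dies the other becomes trivial, via an Angenent-torus avoidance argument (Proposition~\ref{prop_only_one_terminate}). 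None of this is captured by the Euler-characteristic bookkeeping you describe. More broadly, the homology-termination framework of Definition~\ref{defn_inward_neck_pinch} (which you reference but only loosely engage with — your surrogate ``which component changes topology at the pinch'' is not obviously equivalent) is precisely what makes the sets $A,B$ well-defined and the semicontinuity provable; without it the argument does not close.
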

	
	Note that, in precise terms, by MCF we actually refer the level set flow (see \S \ref{sect_prelim}). In fact, before the flow encounter a genus one singularity, it is possible that it passed through some cylindrical singularities or spherical singularities.
	We also remark that Brendle \cite{Brendle16_genus0} proved that the only genus $0$ self-shrinkers are the spheres and the cylinders. In contrast, there are many higher genus self-shrinkers, as constructed in \cite{Angenent92_Doughnut, Nguyen14_Shrinker, KapouleasKleeneMoller18_DesingShrinker, Moller11_ClosedShrinker,  SunWangZhou20_MinmaxShrinker}, among others.
	
	Now, immediately, we can exclude the possibility of multiplicity if the \emph{entropy} of each torus $M^s$ is less than $2$. The entropy of a surface $\Sigma$ was defined by Colding-Minicozzi \cite{ColdingMinicozzi12_generic}:
	\[\Ent(\Sigma):=\sup_{x_0\in\R^3,t_0>0}(4\pi t_0)^{-1}\int_{\Sigma} e^{-\frac{|x-x_0|^2}{4t_0}}.\]
	
	\begin{cor}\label{cor_less_than_2}
		In the setting of Theorem \ref{thm:main1}, if each initial torus $M^s$ has entropy less than $2$, then at the singularity concerned, every tangent flow is given by a multiplicity one, embedded, genus one self-shrinker. 
	\end{cor}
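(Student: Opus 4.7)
The plan is to combine Theorem \ref{thm:main1} with entropy monotonicity and Brendle's classification of embedded genus zero self-shrinkers in $\R^3$. Fix the singularity produced by Theorem \ref{thm:main1} and let $\Sigma$ denote an arbitrary tangent flow at that space-time point; we must show that $\Sigma$ is given by a multiplicity one, embedded, genus one self-shrinker.

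First I would control the multiplicity. By Huisken's monotonicity formula, the Gaussian density at the singular point is at most $\Ent(M^{s_0}) < 2$, so $\Ent(\Sigma) < 2$. Since every self-shrinker in $\R^3$ has entropy at least one, a self-shrinker of multiplicity $m \ge 2$ would have entropy at least $2$; hence $\Sigma$ must have multiplicity one. Embeddedness of $\Sigma$ then follows from embeddedness of the level set flow of $M^{s_0}$ up to its singular time, together with the standard fact that tangent flows of embedded smooth flows are embedded.

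Next I would pin down the genus. Because we are at a singular space-time point, White's regularity theorem rules out $\Sigma$ being a plane; by Theorem \ref{thm:main1}, $\Sigma$ is neither a sphere nor a cylinder. Brendle's theorem \cite{Brendle16_genus0} says that the only embedded, smooth, multiplicity one self-shrinkers of genus zero in $\R^3$ are the plane, the sphere, and the cylinder, so $\Sigma$ must have genus at least one. For the matching upper bound, one argues that the genus of smooth time slices of the level set flow is non-increasing: it is constant through smooth evolution and can only drop across the multiplicity one cylindrical or spherical singularities encountered earlier in the flow. Since $M^{s_0}$ is a torus, every smooth time slice has genus at most one; a Brakke regularity argument comparing a nearby smooth slice to a rescaled copy of $\Sigma$ then yields $\genus(\Sigma) \le 1$.

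The main obstacle is making this genus-upper-bound step precise within the level set flow framework, since the flow may pass through several cylindrical or spherical singularities before reaching the one guaranteed by Theorem \ref{thm:main1}. Each prior singularity must be interpreted as a classical neck-pinch or vanishing sphere so that it decreases, rather than creates, genus; White's regularity theorem together with the explicit structure of the cylindrical and spherical singularity models should supply the needed bookkeeping.
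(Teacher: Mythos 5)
Your overall strategy matches the paper's: use the entropy bound together with Huisken's monotonicity formula to kill multiplicity, use Brendle's classification plus the exclusion of sphere/cylinder/plane to get genus $\geq 1$, and use the fact that the initial surface is a torus to get genus $\leq 1$. However, you have a genuine gap exactly where you flag it: you never actually establish that the tangent flow is a \emph{smooth}, \emph{embedded} self-shrinker with genus at most one, and your proposed route to the genus upper bound --- ``genus of time slices is non-increasing, then compare a nearby smooth slice to a rescaled copy of $\Sigma$ via Brakke regularity'' --- is not a proof. Brakke regularity gives locally smooth graphical convergence away from a small singular set, but it does not by itself prevent topology from being lost in the limit (e.g.\ handles concentrating at scales intermediate between the ambient scale and the blow-up scale, or escaping to infinity if $\Sigma$ is non-compact). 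Turning that heuristic into a theorem is precisely the content of Ilmanen's singularity-structure result \cite{Ilmanen95_Sing2D}, which the paper cites at this step: it asserts in one stroke that every tangent flow of a two-dimensional flow in $\R^3$ is a smooth, embedded, self-shrinking flow $\{\sqrt{-t}\,m\Sigma'\}_{t<0}$ with $\genus(\Sigma')$ bounded by the genus of the initial surface and integer multiplicity $m$. With that theorem in hand, your entropy argument forces $m=1$, and then Brendle (together with White's local regularity to rule out the plane at a singular point) forces $\genus(\Sigma')=1$, exactly as in the paper. Your embeddedness step has the same issue: ``tangent flows of embedded smooth flows are embedded'' is not applicable here because the flow has already passed through earlier singularities, so embeddedness of the blow-up limit is again part of what Ilmanen's theorem supplies, not an elementary fact. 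In short: replace the ``Brakke regularity bookkeeping'' paragraph with a citation to \cite{Ilmanen95_Sing2D} and the argument closes.
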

	
	Recall that the tangent flow represents a specific blow-up limit of a MCF at a singularity, as discussed in \S \ref{sect_notation_setting}.
	By employing Huisken's monotonicity formula \cite{Huisken90}, Ilmanen \cite{Ilmanen95_Sing2D}, and White \cite{White97_Stratif} proved that the tangent flow must be a self-shrinker with multiplicity.
	
	Let us now explicitly provide a family of tori that satisfies the assumption of Corollary \ref{cor_less_than_2}. Consider the rotationally symmetric, compact, genus one self-shrinker in $\mathbb{R}^3$ constructed by Drugan-Nguyen \cite{DruganNguyenShrinkingDonut}, which we will denote by $\mathbb{T}$. It is worth noting that both $\mathbb{T}$ and the Angenent torus \cite{Angenent92_Doughnut} are referred to as {\it shrinking doughnuts}, and they may be the same. It was shown that $\mathbb{T}$ has entropy strictly less than $2$ \cite{DruganNguyenShrinkingDonut}, while Berchenko-Kogan \cite{Berchenko-Kogan21_Angenenet_torus_entropy} provided numerical evidence that the Angenent torus has an entropy of approximately $1.85$.

	\begin{thm}\label{thm:main2}
		Let $\{M^s\}_{s\in [0,1]}$ be a smooth family of tori in $\R^3$ that are sufficiently close in $C^\infty$ to the shrinking doughnut $\mathbb T$, with $M^0$ strictly inside $\mathbb T$ while $M^1$ strictly outside. Then there exists $s_0\in [0,1]$ such that the MCF starting from  $M^{s_0}$ would develop a singularity at which every tangent flow is given by a multiplicity one, embedded, genus one self-shrinker.
	\end{thm}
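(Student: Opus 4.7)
The plan is to reduce Theorem~\ref{thm:main2} to Corollary~\ref{cor_less_than_2} by verifying its two hypotheses for the family $\{M^s\}$: namely, that each $M^s$ has entropy strictly less than $2$, and that the MCF from $M^0$ (resp.\ $M^1$) develops an inward (resp.\ outward) torus neck pinch in the sense of Definition~\ref{defn_inward_neck_pinch}.

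\textbf{Entropy bound.} The entropy $\Ent$ is continuous in $C^\infty$ on closed embedded surfaces, because the Gaussian-weighted integrand is smooth and the defining supremum is attained on a compact set of scales and basepoints. Since $\Ent(\mathbb T)<2$ by \cite{DruganNguyenShrinkingDonut}, shrinking the $C^\infty$-neighborhood of $\mathbb T$ suffices to guarantee $\Ent(M^s)<2$ uniformly in $s\in[0,1]$.

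\textbf{Endpoint neck pinches.} The real content lies in verifying the endpoint neck pinch conditions. Since $M^0$ is strictly inside $\mathbb T$, the avoidance principle traps the level set flow from $M^0$ strictly inside the self-similarly shrinking flow $\mathbb T_t=\sqrt{1-t/T}\,\mathbb T$; in particular the flow of $M^0$ becomes extinct on or before the extinction time $T$ of $\mathbb T$. To upgrade this set-theoretic containment to an inward torus neck pinch, I would analyze the rescaled MCF based at the origin: because $\mathbb T$ is unstable as a critical point of the Colding--Minicozzi entropy functional, with a rotationally symmetric unstable eigenfunction corresponding to deforming the hole of the doughnut, an $M^0$ that is close to $\mathbb T$ but lies on its inner side will first shadow $\mathbb T$ under the rescaled flow and then drift inward along this unstable mode until the central hole closes strictly before extinction. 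The case of $M^1$ is symmetric: the rescaled flow drifts outward along the same unstable direction until the outer meridian pinches. Alternatively, one can interpose between $\mathbb T$ and $M^0$ (resp.\ $M^1$) explicit rotationally symmetric tori whose inward (resp.\ outward) neck pinches can be verified directly from the ODE for their profile curves, and then transfer the pinch to $M^0$ (resp.\ $M^1$) via avoidance.

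\textbf{Main obstacle.} The delicate step is precisely this implication ``strictly inside (resp.\ outside) $\mathbb T$ $\Longrightarrow$ inward (resp.\ outward) neck pinch.'' It must deal with potentially special configurations such as $M^0=\lambda\mathbb T$ with $\lambda<1$, which is strictly inside $\mathbb T$ but whose flow is purely self-shrinking with no neck pinch in the usual sense; these cases directly furnish the multiplicity-one embedded genus one tangent flow demanded by Theorem~\ref{thm:main2} and are harmless. Away from them, the rescaled-flow drift along the one-dimensional rotationally symmetric unstable mode of $\mathbb T$, justified via invariant manifold theory for the rescaled MCF, should force the desired neck pinch for general $C^\infty$-close $M^0,M^1$. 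Once both endpoint pinches are established, Theorem~\ref{thm:main1} supplies a non-cylindrical non-spherical singularity at some $s_0\in[0,1]$, and the uniform entropy bound lets Corollary~\ref{cor_less_than_2} identify every tangent flow there as a multiplicity-one embedded genus one self-shrinker, completing the proof.
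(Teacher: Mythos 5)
Your reduction to Corollary~\ref{cor_less_than_2}, and your treatment of the entropy bound by continuity of $\Ent$ in $C^\infty$, both match the paper. The real issue is the endpoint neck pinch step, where your approach differs from the paper's and has a genuine gap.

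Recall that by Definition~\ref{defn_inward_neck_pinch}, ``the inward torus neck of $M^0$ will pinch'' is a purely \emph{homological} condition: the generator $a_0\in H_1(W_\ins^{\cM^0}(0))$ terminates in the sense of Definition~\ref{defn_termination}. You instead read it as a statement about a classical cylindrical singularity, and then try to produce one by invariant-manifold dynamics for the rescaled flow near $\mathbb T$. This is both unnecessary and incomplete as written: nothing in the paper justifies the ``shadow, then drift along $\phi_0$, then the hole closes before extinction'' picture for the level set flow, and even if a neck pinch occurred somewhere, you would still need to argue that it causes $a_0$ (rather than, say, $b_0$) to terminate. You also correctly notice that $M^0=\lambda\mathbb T$ ($\lambda<1$) is a problem case for your dynamical heuristic, but your resolution (``it directly furnishes the tangent flow'') side-steps rather than repairs the hypothesis you are trying to verify. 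Finally, the proposed alternative of interposing explicit rotationally symmetric tori and ``transferring the pinch via avoidance'' is not elaborated, and transferring a neck pinch between nested flows is exactly the kind of claim that needs the homological framework to make sense.

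The paper's actual argument (Proposition~\ref{prop_perturb_shrinker_which_terminate_first}) closes exactly this gap, and it is both short and fully topological: by avoidance, $S^0(t)$ stays inside $\sqrt{-t}\,\mathbb T$, so $S^0(t)$ vanishes by some $\tilde T<0$ and $W^{\cS^0}_\ins[-1,\tilde T]\subset W^{\cS}_\ins[-1,\tilde T]$, the latter being the spacetime track of the inner solid torus of the self-similar flow. If $a_0$ did not terminate by $\tilde T$, some loop $\alpha_0\in a_0$ would bound a $2$-chain $A$ in $W^{\cS^0}_\ins[-1,\tilde T]$; rescaling each time slice of $A$ by $1/\sqrt{-t}$ places it inside the \emph{static} interior solid torus of $\mathbb T$, and projecting to space shows $\alpha_0$ is null-homologous there, contradicting that $a_0$ generates $\Z$. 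A similar argument (using that an $\epsilon$-neighborhood of $\sqrt{-T}\mathbb T$ becomes star-shaped when $\diam(\sqrt{-T}\mathbb T)<\epsilon$) handles the outward class for $M^1$. Note that this argument handles $M^0=\lambda\mathbb T$ without any special treatment. In short: avoidance plus the homological definition already does everything; you should replace the dynamical heuristic with the chain-rescaling argument.
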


	The idea of Theorem \ref{thm:main2} can be traced back to the work of Lin and the second author in \cite{Lin-Sun2022_bifurcation}. In earlier work, Colding-Ilmanen-Minicozzi-White \cite{CIMW13_EntropyMinmzer} observed that one can perturb a closed embedded self-shrinker in $\R^3$ such that the MCF has only neck and spherical singularities. Lin and the second author observed a bifurcation phenomenon: Inward (resp. outward) perturbations cause the MCF pinch from inside (resp. outside). After we completed this manuscript, we were notified by the anonymous referee that the idea of Theorem \ref{thm:main1} has been discussed and explained orally by Edelen and White.
	
	It is also interesting to compare our results with the recent developments in generic MCF \cite{ColdingMinicozzi12_generic, CCMS20_GenericMCF, CCMS21_GenericMCF_LowEntropy, SunXue2021_initial_conical, SunXue2021_initial_closed, chodoshchoischulze2023mean, sun-generic-multi-1}: One can perturb a single MCF to avoid a singularity that is not spherical or cylindrical. In contrast, our results imply that for a certain one-parameter family of MCFs, a singularity that is modeled by a genus one shrinker remains robust under perturbations.

	It is natural to ask whether Theorem \ref{thm:main1} extends to surfaces with genus two or above. Actually, it would not: See a counterexample in Remark \ref{rmk_main_thm_fail}. Nevertheless, a similar theory might be established for a multi-parameter family of higher genus surfaces (see Question \ref{conj_higher_genus}).
	
	Let us now present several applications of the above theorems.
	\begin{thm}\label{thm_genus1_least_entropy}
		An embedded, genus one self-shrinker in $\R^3$ of the least entropy either is non-compact or has index $5$.
	\end{thm}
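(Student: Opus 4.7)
Let $\Sigma$ be an embedded, genus-one self-shrinker of least entropy; we may assume $\Sigma$ is compact (else the theorem's other alternative holds). Applying Theorem~\ref{thm:main2} to any admissible family near $\mathbb{T}$ produces a genus-one shrinker of entropy at most $\Ent(\mathbb{T})$, so $\Ent(\Sigma)\le\Ent(\mathbb{T})<2$. The plan is to prove $\ind(\Sigma)=5$ via two matching inequalities.

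For the lower bound, recall the stability operator $L=\Delta-\tfrac{1}{2}x\cdot\nabla+|A|^2+\tfrac{1}{2}$. The three translation eigenfunctions $\langle e_i,\nu\rangle$ (eigenvalue $\tfrac{1}{2}$) and the dilation eigenfunction $H$ (eigenvalue $1$) furnish four linearly independent unstable directions. A closed genus-one surface is neither sphere, plane, nor cylinder, so by Colding--Minicozzi's classification of $F$-stable self-shrinkers, $\Sigma$ is $F$-unstable and supplies a fifth unstable direction orthogonal to the trivial four; thus $\ind(\Sigma)\ge5$.

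For the upper bound, I argue by contradiction: suppose $\ind(\Sigma)\ge6$, so the $F$-index (modulo the four-dimensional trivial subspace) is at least $2$. First I would establish the analog of Theorem~\ref{thm:main2} with $\Sigma$ in place of $\mathbb{T}$: any $C^\infty$-close smooth family $\{N^s\}$ of tori with $N^0$ strictly inside and $N^1$ strictly outside $\Sigma$ satisfies the inward/outward-pinch hypothesis of Theorem~\ref{thm:main1}. The argument is the same maximum-principle one: the MCF of an inside perturbation is trapped inside the shrinking $\Sigma$ and must develop a singularity before time $1$, and since $\Ent(N^s)<2$, Corollary~\ref{cor_less_than_2} restricts that first singularity to a neck pinch of the correct side. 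Next, using the two-dimensional space of nontrivial $F$-unstable directions, I construct
\[ M^s \;=\; \exp_{\Sigma}\!\bigl(\delta\,[(2s-1)\mathbf{1}+c\phi]\,\nu\bigr),\qquad s\in[0,1], \]
with $\delta,c>0$ small, choosing $\phi$ inside the 2-dim unstable space so that the Colding--Ilmanen--Minicozzi--White entropy-reduction argument delivers $\Ent(M^s)<\Ent(\Sigma)$ for every $s$. Since $|c\phi|<1$, $M^0$ lies strictly inside and $M^1$ strictly outside $\Sigma$. Applying the extension of Theorem~\ref{thm:main2} to $\{M^s\}$ yields $s_0$ for which the MCF from $M^{s_0}$ develops a singularity modeled by a multiplicity-one embedded genus-one self-shrinker $\Sigma''$ with $\Ent(\Sigma'')\le\Ent(M^{s_0})<\Ent(\Sigma)$, contradicting minimality.

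The main obstacle is the construction of $\phi$. The ``inside-outside'' sweep $(2s-1)\mathbf{1}$ has nontrivial projection onto the dilation mode $H$ (so $\mathbf{1}$ is not orthogonal to the trivial subspace in Gaussian $L^2$), and its orthogonal remainder may lie partly in the stable direction of $L$, tending to \emph{increase} the $F$-functional. The two-dimensional unstable subspace is precisely what is needed to pick $\phi$ cancelling these stable contributions while leaving a strict entropy decrease for every $s\in[0,1]$; this is exactly where the extra unstable dimension is used, and matches the sharp conclusion $\ind(\Sigma)=5$.
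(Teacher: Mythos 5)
There is a genuine gap in your upper-bound argument, precisely at the point you flag as the ``main obstacle.'' Your family $M^s = \exp_\Sigma(\delta[(2s-1)\mathbf{1}+c\phi]\nu)$ drives the inside-to-outside interpolation with the constant function $\mathbf{1}$, which (as you note) is not orthogonal to the translation/dilation modes and may have a nontrivial stable component under the stability operator $L$. The proposed fix --- choosing a \emph{fixed} $\phi$ in the two-dimensional unstable subspace to ``cancel these stable contributions'' --- cannot work: the stable and unstable eigenspaces are $L^2$-orthogonal, so adding $c\phi$ does nothing to the stable component of $(2s-1)\mathbf{1}$, which at $s=0,1$ is at full magnitude. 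Nothing in the construction therefore rules out $\Ent(M^s)\ge\Ent(\Sigma)$ for $s$ near the endpoints, which is exactly what you need to exclude. Moreover, the $F$-functional second-variation heuristic isn't even the right mechanism: a perturbation with negative second variation of $F$ need not decrease the entropy (the supremum over basepoints/scales can jump), which is why the paper invokes a dedicated result (\cite[Theorem 0.15]{ColdingMinicozzi12_generic}) that only applies to variations orthogonal to the four trivial directions.

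The paper's construction sidesteps $\mathbf{1}$ entirely. It uses the rotation $M^s(0)=\Sigma+\epsilon(-\cos(s\pi)\phi_0+\sin(s\pi)\phi_1)\mathbf{n}$, where $\phi_0>0$ is the principal (hence one-signed) eigenfunction and $\phi_1$ is a second unstable eigenfunction, both orthogonal to translations and dilation. Positivity of $\phi_0$ makes the endpoints $\mp\epsilon\phi_0\mathbf{n}$ land strictly inside/outside $\Sigma$ automatically --- no constant function required --- and the interpolating circle never leaves the span of $\{\phi_0,\phi_1\}$, so \cite[Theorem 0.15]{ColdingMinicozzi12_generic} gives $\Ent(M^s(0))<\Ent(\Sigma)$ for \emph{every} $s$. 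This is exactly where the assumption $\ind(\Sigma)\ge 6$ enters: with only index $5$ there is a single nontrivial unstable direction $\phi_0$, and the straight-line path $s\mapsto(2s-1)\epsilon\phi_0$ would pass through $\Sigma$ itself at $s=1/2$, where the entropy does not strictly drop. The second unstable direction $\phi_1$ is precisely what lets the path circle around the origin in function space while staying entropy-decreasing. Your lower bound $\ind(\Sigma)\ge 5$ and the appeal to Theorem~\ref{thm:main2}/Corollary~\ref{cor_less_than_2} once the family is in hand both match the paper.
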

	
	\begin{rmk}
		In \cite{BuzanoNguyenSchulz+2025+35+52}, Buzano-Nguyen-Schulz used equivariant min-max method to construct a number of noncompact shrinkers with nontrivial topology and symmetry. In particular, they constructed a genus $1$ shrinker with dihedral symmetry, which is the first example of a non-compact genus one shrinker. They also used numerical simulation to show that such a genus one shrinker has Gaussian area less than the Angenent torus, and they conjectured that the index of such a shrinker is $5$. Combining with our Theorem \ref{thm_genus1_least_entropy}, it is plausible  that  Buzano-Nguyen-Schulz's example  has the least entropy among the genus one shrinkers.
	\end{rmk}
	
	Note that the existence of an entropy minimizer among all embedded, genus $g$ self-shrinkers in $\R^3$, with a fixed $g$, was proved by Sun-Wang \cite{SunWang2020compactness}.
	
	\begin{thm}\label{thm_eternal}
		There exists an ancient MCF through cylindrical and spherical singularities $\{M(t)\}_{t<0}$ in $\R^3$ such that:
		\begin{itemize}
			\item As $t\to-\infty$, $\frac{1}{\sqrt{-t}} M(t)\to \mathbb T$ smoothly.
			\item As $t\to 0$, $M(t)$ hits a singularity at which every tangent flow is given by a multiplicity one, embedded, genus one self-shrinker of lower entropy than $\mathbb T$.
		\end{itemize}
	\end{thm}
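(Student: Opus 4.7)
The plan is to construct $\{M(t)\}_{t<0}$ as a subsequential integral Brakke flow limit of MCFs furnished by Theorem \ref{thm:main2}, applied to one-parameter families that approximate rescaled copies of $\mathbb{T}$ at increasingly negative times. This parallels the unstable-manifold constructions of ancient flows emanating from unstable shrinkers (cf.\ \cite{CCMS20_GenericMCF}) combined with the bifurcation viewpoint of Lin--Sun \cite{Lin-Sun2022_bifurcation}.

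First I would fix a smooth normal variation $\varphi$ on $\mathbb{T}$ such that, for small $\delta>0$, $\mathbb{T}-\delta\varphi$ lies strictly inside $\mathbb{T}$ and its MCF develops an inward torus neck pinch, while $\mathbb{T}+\delta\varphi$ lies strictly outside and develops an outward torus neck pinch; such $\varphi$ exists by the bifurcation analysis of \cite{Lin-Sun2022_bifurcation} applied along an unstable mode of $\mathbb{T}$ corresponding to a non-symmetric deformation. For a sequence $T_k\to\infty$ and small $\delta>0$ to be calibrated, consider the family
\[
M_k^{s,\delta}\ :=\ \sqrt{T_k}\,\bigl(\mathbb{T}+(2s-1)\,\delta\,\varphi\bigr),\qquad s\in[0,1],
\]
of tori at time $-T_k$. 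After rescaling by $1/\sqrt{T_k}$ the family is $C^\infty$-close to $\mathbb{T}$ with endpoints strictly inside and strictly outside, so Theorem \ref{thm:main2} (together with the scale invariance of MCF) produces $s_k(\delta)\in[0,1]$ for which the MCF $\{\Sigma_k^{\delta}(t)\}_{t\geq -T_k}$ starting from $M_k^{s_k(\delta),\delta}$ first develops a multiplicity-one embedded genus-one singularity at some point $(x_k^{\delta},t_k^{\delta})$; every earlier singularity is multiplicity-one cylindrical or spherical by Corollary \ref{cor_less_than_2}, using $\mathrm{Ent}(\mathbb{T})<2$ and $\delta$ small. I then calibrate $\delta_k>0$ so that, in the rescaled flow $(-t)^{-1/2}\Sigma_k^{\delta_k}(t)$, the first exit time from a fixed $C^2$-neighborhood of $\mathbb{T}$ of radius $\delta_0>0$ equals the prescribed physical time $t=-1$; this is possible by continuous dependence, as the exit time sweeps from $-\infty$ up to $0$ as $\delta$ decreases to $0$. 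Translating space-time so that $(x_k^{\delta_k},t_k^{\delta_k})=(0,0)$ and applying Ilmanen's compactness to the uniformly entropy-bounded sequence $\Sigma_k^{\delta_k}$, I extract a subsequential ancient integral Brakke flow $\{M(t)\}_{t<0}$, passing only through cylindrical and spherical singularities.

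The backward asymptotic $(-t)^{-1/2}M(t)\to\mathbb{T}$ as $t\to-\infty$ then follows from the exit-time calibration together with linearization of the rescaled MCF around $\mathbb{T}$. By construction $M(t)\not\equiv\sqrt{-t}\,\mathbb{T}$, since the rescaled surface at $t=-1$ differs from $\mathbb{T}$ by exactly $\delta_0$ in $C^2$. At $t=0$ the limit develops a singularity whose tangent flow $\Sigma_\infty$ is a multiplicity-one embedded genus-one self-shrinker by upper semicontinuity of Gaussian density and Corollary \ref{cor_less_than_2}. If $\mathrm{Ent}(\Sigma_\infty)=\mathrm{Ent}(\mathbb{T})$, then the equality case of Huisken's monotonicity \cite{Huisken90} forces $M$ to be self-similar centered at $(0,0)$, hence equal to $\{\sqrt{-t}\,\mathbb{T}\}$, contradicting nontriviality; thus $\mathrm{Ent}(\Sigma_\infty)<\mathrm{Ent}(\mathbb{T})$. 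The main obstacle is the exit-time calibration, namely establishing continuous and surjective dependence of the exit time on $\delta$ while the intermediate parameter $s_k(\delta)$ is only implicitly selected by Theorem \ref{thm:main2}; this is the standard delicate step in constructing nontrivial ancient flows from unstable shrinkers, to be handled by an intermediate value argument combined with pseudolocality estimates near $\mathbb{T}$.
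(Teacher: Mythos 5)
Your high-level strategy overlaps with the paper's in spirit, but the technical route is genuinely different and, as written, has a gap at exactly the place you flag yourself. The paper does not construct the ancient flow by hand: it invokes Liu's index bound $\ind(\mathbb T)\geq 7$ to get two negative eigenfunctions $\phi_0,\phi_1$ of the Jacobi operator orthogonal to the translation/scaling modes, then cites Choi--Mantoulidis \cite[Theorem 1.6]{ChoiMantoulidis22AncientGradientFlows} to obtain, off the shelf, a one-parameter family of smooth ancient \emph{rescaled} MCFs $\{\tilde M^s(\tau)\}_{\tau\le 0}$ emanating from $\mathbb T$, with the endpoint flows exiting inward and outward respectively. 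The backward asymptotic $\tilde M^s(\tau)\to\mathbb T$ is part of that theorem's conclusion, not something to be re-derived. After unrescaling, the paper simply applies Theorem~\ref{thm:main2} to the time-zero slices $\{\tilde M^s(0)\}_s$. Your proposal instead tries to reconstruct the ancient flow as a Brakke limit of finite-time flows launched from $\sqrt{T_k}(\mathbb T+(2s-1)\delta\varphi)$, calibrating $\delta_k$ by an intermediate value argument on the exit time. The calibration is the crux and is not justified: Theorem~\ref{thm:main2} is purely an existence result, so the map $\delta\mapsto s_k(\delta)$ has no canonical (let alone continuous) selection, and hence neither does the exit time; your claim of "continuous dependence" is precisely what needs a proof, and the paper's entire reason for invoking Choi--Mantoulidis is to avoid re-proving this. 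Likewise, "the backward asymptotic follows from linearization" compresses a genuinely nontrivial step (convergence of the rescaled flow to $\mathbb T$ from mere confinement to a $C^2$-neighborhood requires a \L ojasiewicz-type or unstable-manifold argument, which is what Choi--Mantoulidis provides). Two smaller remarks: your straight-line family $\mathbb T+(2s-1)\delta\varphi$ passes through $\mathbb T$ at $s=1/2$, whereas the paper's semicircular family $\Sigma+\epsilon(-\cos(s\pi)\phi_0+\sin(s\pi)\phi_1)\mathbf n$ is chosen precisely so that every member has entropy strictly below $\mathrm{Ent}(\mathbb T)$; and you do not record the index input from Liu that underlies the whole construction. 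On the positive side, your closing argument for the strict entropy drop via the equality case of Huisken's monotonicity (self-similarity forcing $M(t)=\sqrt{-t}\,\mathbb T$, contradicting nontriviality) is correct and in fact makes explicit a step the paper leaves implicit.
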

	
	In fact, Theorem \ref{thm_eternal} remains valid even with $\mathbb{T}$ replaced by any other closed, embedded, rotationally symmetric, genus one shrinker (if they indeed exist), and the same proof will hold.
	
	Recalling that the rotationally symmetric shrinker $\mathbb{T}$ must have index of at least $7$, as shown by Liu \cite{Liu2016_index_shrinker}, we can deduce the following corollary from Theorem \ref{thm_genus1_least_entropy} and \ref{thm_eternal}.
	\begin{cor}
		There exists an embedded, genus one self-shrinker in $\R^3$ with entropy lower than $\mathbb T$.
	\end{cor}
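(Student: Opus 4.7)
The statement is essentially immediate once we have Theorem \ref{thm_eternal} in hand, and my plan is to make this explicit. Theorem \ref{thm_eternal} furnishes an ancient mean curvature flow $\{M(t)\}_{t<0}$ whose terminal singularity as $t\to 0$ admits, at every tangent flow, a \emph{multiplicity one, embedded, genus one self-shrinker} $\Sigma$ with $\Ent(\Sigma)<\Ent(\mathbb T)$. I would simply take this $\Sigma$ and declare it to be the desired shrinker: it is embedded and of genus one by construction, and the strict entropy bound is already baked into the statement of Theorem \ref{thm_eternal}. So the core of the argument is the single step of extracting a tangent flow at the terminal time and citing the hypotheses verbatim.

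To justify invoking Theorem \ref{thm_genus1_least_entropy} alongside Theorem \ref{thm_eternal} (as the paragraph preceding the corollary does), I would add the following structural observation. By Sun--Wang \cite{SunWang2020compactness}, an entropy minimizer $\Sigma^*$ exists among embedded genus one self-shrinkers in $\R^3$. Theorem \ref{thm_genus1_least_entropy} then forces $\Sigma^*$ to be non-compact or to have index $5$; Liu's bound \cite{Liu2016_index_shrinker}, which gives index of $\mathbb T$ at least $7$, together with compactness of $\mathbb T$, rules out $\Sigma^*=\mathbb T$. The strict inequality $\Ent(\Sigma^*)\le\Ent(\Sigma)<\Ent(\mathbb T)$ then upgrades this to: \emph{every} embedded genus one entropy minimizer has entropy strictly less than $\mathbb T$, and is either non-compact or of index $5$. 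I do not anticipate any real obstacle here, since all the analytic and geometric work has been done in Theorems \ref{thm_eternal} and \ref{thm_genus1_least_entropy}; the only thing to be careful about is checking that the existence of a minimizer (Sun--Wang) indeed applies in this multiplicity-one embedded setting, which is standard.
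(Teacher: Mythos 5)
Your proposal is correct and follows exactly the route the paper indicates: Theorem~\ref{thm_eternal} already produces a multiplicity one, embedded, genus one self-shrinker of entropy strictly less than $\mathbb T$ as a tangent flow, and this is precisely the object the corollary asserts to exist. Your supplementary paragraph correctly explains the role of Theorem~\ref{thm_genus1_least_entropy} together with Liu's index bound and Sun--Wang's existence of a minimizer, though it is not strictly needed to obtain the stated corollary.
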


	Finally, the three self-shrinkers in $\R^3$ with the lowest entropy are the plane, the sphere, and the cylinder (\cite{CIMW13_EntropyMinmzer, BernsteinWang17_small_entropy}). Notably, all three of them are rotationally symmetric. Kleene-M\o ller \cite{KM_rotational} proved that all other rotationally symmetric smooth embedded self-shrinkers are closed with genus $1$. 
	
	Now,
	the space of smooth embedded self-shrinkers in $\R^3$ with entropy less than some constant $\delta<2$ is known to be compact in the $C_\text{loc}^\infty$ topology (see \cite{lee2021compactness}). Together with the rigidity of the cylinder as a self-shrinker by \cite{ColdingIlmanenMinicozzi15}, there exists a smooth embedded self-shrinker that minimizes entropy among all smooth embedded self-shrinkers with entropy larger than that of the cylinder.
	
	\begin{cor}\label{cor_fourth}
		A smooth embedded self-shrinker in $\R^3$ with the fourth lowest entropy is not rotationally symmetric.
	\end{cor}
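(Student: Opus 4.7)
The plan is a short proof by contradiction that chains together the classification of rotationally symmetric shrinkers, Liu's index lower bound, and the results established earlier in this paper. Suppose that some smooth embedded self-shrinker $\Sigma\subset\R^3$ of fourth-lowest entropy is rotationally symmetric. (Such a $\Sigma$ exists: the paragraph preceding the corollary records that by the $C^\infty_{\loc}$ compactness \cite{lee2021compactness} together with the cylinder rigidity \cite{ColdingIlmanenMinicozzi15}, the infimum of entropy over shrinkers with entropy above the cylinder's is attained.) By Kleene--M\o ller \cite{KM_rotational}, every smooth embedded rotationally symmetric self-shrinker in $\R^3$ is either the plane, the sphere, the cylinder, or a closed genus one surface. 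Since the plane, sphere, and cylinder already occupy the three lowest-entropy positions by \cite{CIMW13_EntropyMinmzer, BernsteinWang17_small_entropy}, $\Sigma$ must be a closed, rotationally symmetric, genus one self-shrinker.

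The next step is to observe that $\Sigma$ attains the infimum of entropy over all smooth embedded genus one self-shrinkers in $\R^3$: every genus one shrinker has entropy strictly greater than that of the cylinder, while $\Sigma$ by definition has the least entropy among all embedded shrinkers whose entropy exceeds the cylinder's. Applying Theorem \ref{thm_genus1_least_entropy} then forces $\Sigma$ to be either non-compact or of index $5$, and since $\Sigma$ is closed, it must have index exactly $5$.

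The final step is to derive a contradiction via Liu \cite{Liu2016_index_shrinker}: every rotationally symmetric, closed, genus one self-shrinker in $\R^3$ has index at least $7$, which is incompatible with the index $5$ conclusion above. Hence the fourth-lowest-entropy smooth embedded self-shrinker cannot be rotationally symmetric. I do not foresee any substantive obstacle; the proof is essentially an organizational assembly of Theorems \ref{thm_genus1_least_entropy} and \ref{thm_eternal} with the unnamed corollary above, and the only point worth double-checking is that the fourth-lowest entropy is genuinely strictly below $2$ so that \cite{lee2021compactness} applies --- a fact which is itself guaranteed by the corollary exhibiting a genus one shrinker with entropy strictly below that of $\mathbb T$.
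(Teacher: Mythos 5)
Your proof is correct, and it takes a genuinely different (and arguably more economical) route than the paper's. The paper invokes Theorem \ref{thm_eternal} with $\mathbb T$ replaced by $\Sigma$ (checking that Liu's index bound and the entropy bound below $2$ let the ancient rescaled flow construction of Choi--Mantoulidis go through), obtains a genus one shrinker of strictly smaller entropy, and contradicts the fourth-lowest assumption directly against the Bernstein--Wang classification of the three lowest entropies. You instead observe that, by the same Bernstein--Wang fact, $\Sigma$ must already be the entropy minimizer among genus one shrinkers, so Theorem \ref{thm_genus1_least_entropy} forces it (being compact by Kleene--M\o ller) to have index exactly $5$, which collides head-on with Liu's index $\geq 7$ bound. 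Both arguments ultimately rest on the same machinery (Theorem \ref{thm:main1} and Corollary \ref{cor_less_than_2}, packaged once as Theorem \ref{thm_genus1_least_entropy} and once as Theorem \ref{thm_eternal}), but your version avoids re-invoking the ancient-flow existence theorem and is essentially a one-line application of Theorem \ref{thm_genus1_least_entropy} after the bookkeeping; in that sense it is cleaner. The one place to be slightly careful, and you flag it correctly, is verifying that $\Sigma$ is an entropy minimizer among genus one shrinkers rather than just the fourth-lowest overall --- this needs the Bernstein--Wang fact that every genus one shrinker has entropy strictly above the cylinder's, which you invoke appropriately.
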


	\subsection{Main ideas: Change in homology under MCF}\label{sect_topo_mcf}

	The major challenge of this paper is to introduce some new concepts to rigorously state and prove the interpolation argument we outlined on page 1 and Figure \ref{fig:first_pic}. Particularly, it is crucial to describe the topological change of the surfaces more precisely. Let $\cM=\{M(t)\}_{t\geq 0}$ be a MCF in $\R^3$, where the initial condition $M(0)$ is a closed, smooth, embedded surface. Since we would allow $M(t)$ to have singularities and thus change its topology,  $\cM$ is, more precisely, a \emph{level set flow}. {\it In this paper, we often use the phrases MCF and level set flow interchangeably.}
	
	It is known that the topology of $M(t)$ simplifies over time. In \cite{White95_WSF_Top}, White focused on describing the complement $\R^3\backslash M(t)$ (instead of $M(t)$ itself), and how it changes over time. For example, he showed $\rank (H_1(\R^3\backslash M(t)))$ is non-increasing in $t$, where $H_1$ denotes the first homology group in $\Z$-coefficients. Therefore, heuristically, the topology can only be destroyed but not created during the evolution of the surface.
	
	In this paper, we will further describe this phenomenon by {\it keeping track of which elements of the initial homology group $H_1(\R^3\backslash M(0))$ are destroyed, and how they are destroyed.} To illustrate, let us use the flow depicted in Figure \ref{fig:genus2} as an example.
	
	\begin{figure}[h]
		\centering
		\makebox[\textwidth][c]{\includegraphics[width=6in]{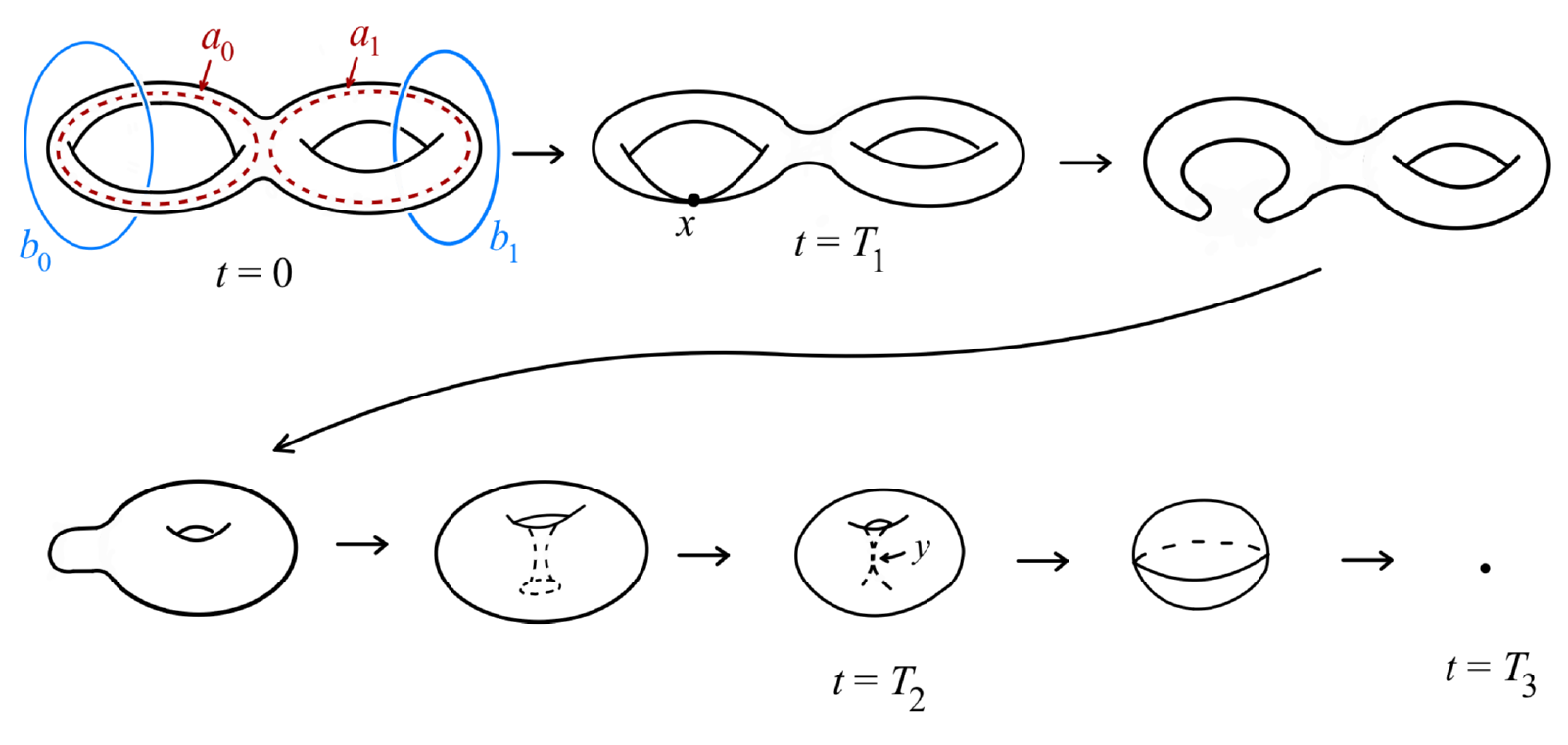}}
		\caption{}
		\label{fig:genus2}
	\end{figure}
	
	\subsubsection{Heuristic observation}
	Let us begin by providing some heuristic observations regarding Figure \ref{fig:genus2}. We will elaborate on them more precisely shortly. We fix four elements of $H_1(\R^3\backslash M(0))$ at time $t=0$, as shown in the figure. Note that $a_0$ and $a_1$ are in the bounded region {\it inside} the genus two surface $M(0)$, whereas $b_0$ and $b_1$ are in the region {\it outside} $M(0)$.
	\begin{enumerate}
		\item At time $t=T_1$, $a_0$ is ``broken" by the cylindrical singularity $x$  of the flow. As a result, for later time $t>T_1$, $a_0$ no longer exists. Apparently, it ``terminates'' at time $T_1$.
		\item\label{item_heuristic_survive} On the other hand, $a_1$, $b_0$, and $b_1$ all can survive through time $T_1$. For example, for $b_0$, we can clearly have a {\it continuous} family of  loops, $\{\beta_t\}_{t\geq 0}$, where $[\beta_0]=b_0$ and  each $\beta_t$ is a loop {\it outside} the surface $M(t)$. In this sense, $b_0$ will survive for all time, although it becomes {\it trivial} after time $T_1$.
		\item As for $b_1$, although it survives through $t=T_1$, it will terminate at $t=T_2$, when it is broken by the cylindrical singularity $y$.
	\end{enumerate}
	Let us now provide precise descriptions of these observations.
	
	\subsubsection{Three new concepts} 
	To our knowledge, these concepts are new, but they seem natural in the context of geometric flows. We believe these concepts may hold independent interest as well.
	
	To set up, for any two times $t_1<t_2$, let us  consider the {\it complement} of the spacetime track of the flow within the time interval $[t_1,t_2]$:
	$$W[t_1,t_2]:=\bigcup_{t\in [t_1,t_2]}(\R^3\backslash M(t))\x\{t\}\subset \R^3\x [t_1,t_2].$$
	In order to discuss the ``termination'' of an element $c_0\in H_1(\R^3\backslash M(0))$ under the flow, we first need to \emph{relate  elements of $H_1(\R^3\backslash M(0))$ and elements of $H_1(\R^3\backslash M(t))$} at some later time $t>0$.
	
	\begin{quote}
		{\bf Homology descent.} (Definition \ref{defn_order}.) Given two elements $c_0 \in H_1(\R^3\backslash M(0))$ and   $c \in H_1(\R^3\backslash M(t))$ with $t>0$, we say that  {\it $c$ descends from $c_0$}, and denote 
		$$c_0\succ c,$$
		if the following holds: 
		For every representative $\gamma_0\in c_0$ and $\gamma\in c$, if we view them as subsets
		$$\gamma_0\subset (\R^3\backslash M(0))\x\{0\},\;\; \gamma\subset (\R^3\backslash M(t))\x\{t\},$$
		then they bound some singular 2-chain $\Gamma\subset W[0,t]$, i.e. 
		$\gamma_0-\gamma=\partial\Gamma.$ (See Figure \ref{fig::descent}.)
	\end{quote}
	\begin{figure}[h]
		\centering
		\makebox[\textwidth][c]{\includegraphics[width=3.5in]{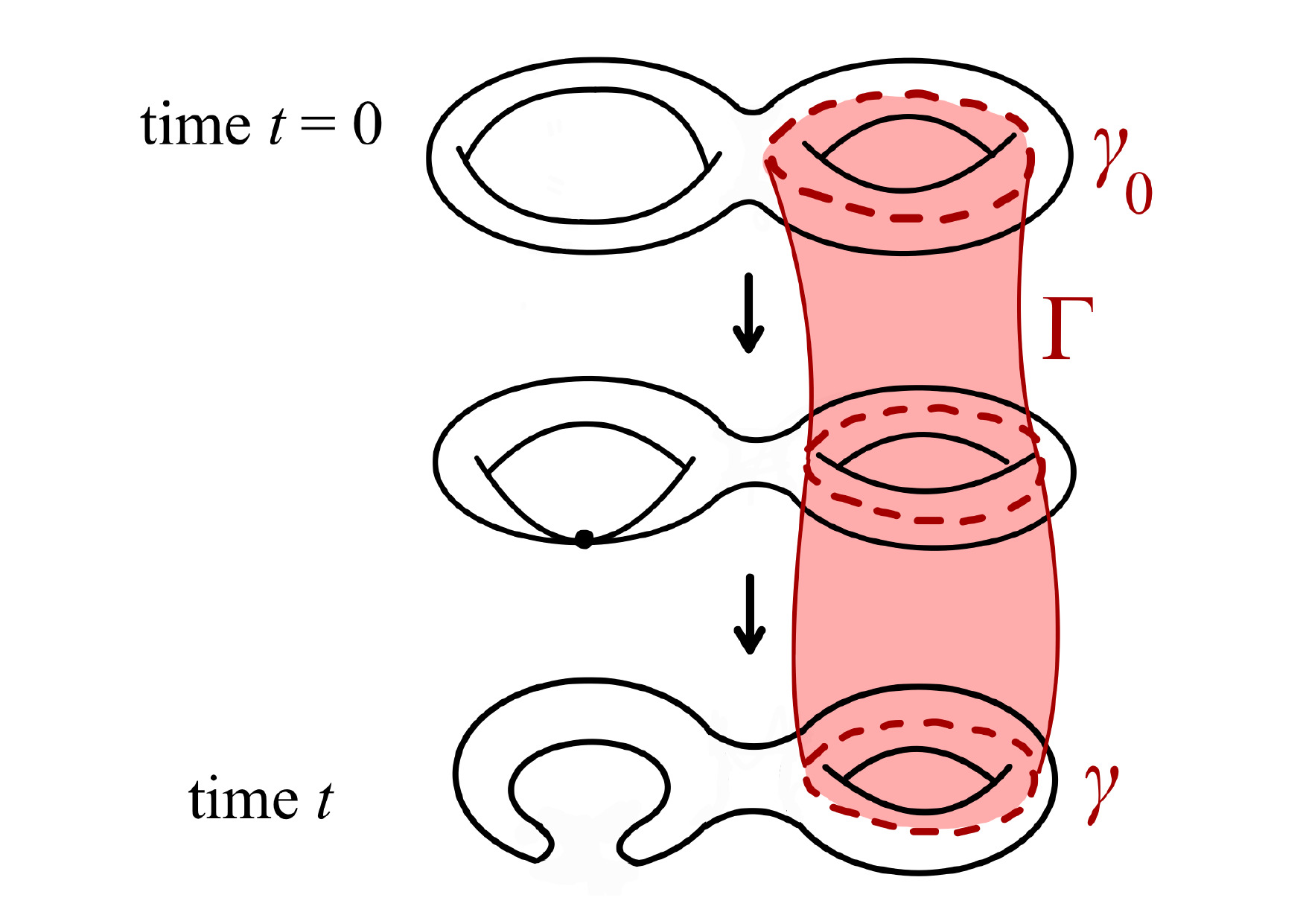}}
		\caption{}
		\label{fig::descent}
	\end{figure}
	
	As we will prove, the above notion satisfies some desirable properties. For example, given a $c_0 \in H_1(\R^3\backslash M(0))$, the element $c \in H_1(\R^3\backslash M(t))$  described above, if exists, turns out to be {\it unique}. Consequently, we denote this unique element as $c_0(t)$. 
	
	This enables us to further define:
	\begin{quote}
		{\bf Homology termination.} (Definition \ref{defn_termination}.) Let $c_0\in H_1(\R^3\backslash M(0))$. If
		$$\mathfrak{t}(c_0):=\sup\{t\geq 0:c_0\succ c \textrm{ for some }c\in H_{1}(\R^3\backslash M(t))\}$$
		is finite, then we say that $c_0$ {\it terminates at time} $\mathfrak{t}(c_0)$.
	\end{quote}
	For instance, in Figure \ref{fig:genus2}, we observe that $a_0$ terminates at time $T_1$, and $b_1$ terminates at time $T_2$. However, $b_0$ \emph{never} terminates, despite the fact that $b_0(t)$ becomes trivial for $t>T_1$. Similarly, $a_1$ also \emph{never} terminates, even though $a_1(t)$ becomes trivial for $t>T_2$. Note that $a_1$ would not terminate at time $T_3$: For any $t>T_3$, any loop in $\mathbb R^3\backslash M(t)=\R^3$  would bound a disc in $\mathbb R^3$, so it follows easily that for any loop $\gamma_0\in a_1$ and  loop $\gamma\subset \R^3\times\{t\}$, $\gamma_0-\gamma$ would bound some 2-dimensional chain in the complement of the spacetime track.
	
	Finally, we can describe what ``$a_0$ breaks at a cylindrical singularity $x$" means.
	\begin{quote}
		{\bf Homology breakage.} (Definition \ref{defn_breakage}.)
		Let $c_0\in H_1(\R^3\backslash M(0))$,  $T> 0$, and $x\in M(T)$. Suppose  the following holds:
		\begin{itemize}
			\item For each $t\in [0,T)$, {\it the} element $c_0(t)\in H_1(\R^3\backslash M(t))$ (such that $c_0\succ c_0(t)$) exists.
			\item For every neighborhood $U\subset\R^3$ of $x$, for each $t<T$ sufficiently  close  to $T$, every element of $c_0(t)$ intersects $U$.
		\end{itemize}
		Then we say that $c_0$ {\it breaks at}  $(x,T)$. (See Figure \ref{fig::breakage}.)
	\end{quote}
	\begin{figure}[h]
		\centering
		\makebox[\textwidth][c]{\includegraphics[width=1.5in]{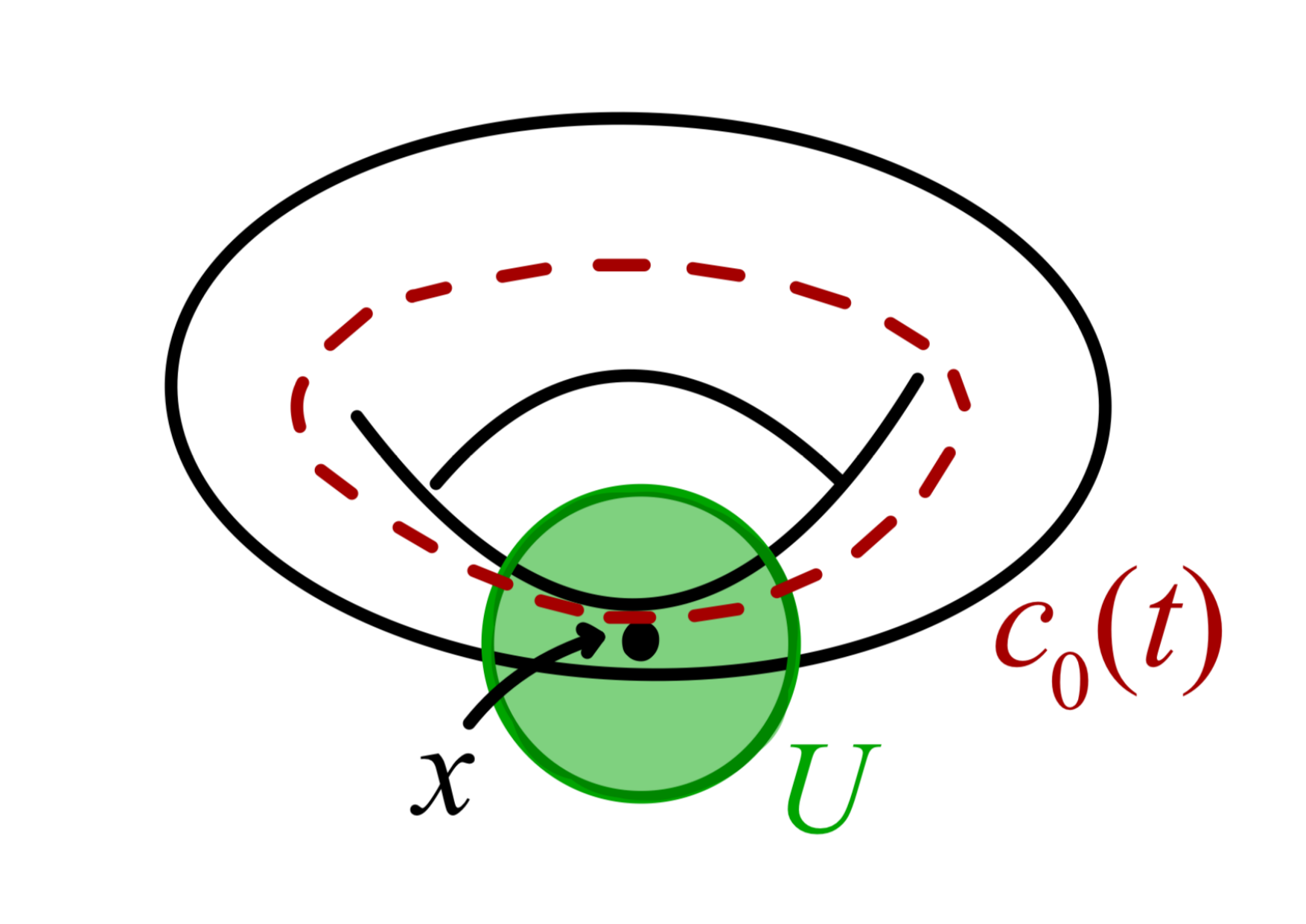}}
		\caption{The picture at time $t$, for all $t<T$ sufficiently close to $T$.}
		\label{fig::breakage}
	\end{figure}
	
	For example, in Figure \ref{fig:genus2}, $a_0$ breaks at $(x,T_1)$, while $b_1$ breaks at $(y,T_2)$.
	
	As we will see, these three new concepts are quite useful and satisfy several nice properties. Here are a few examples:
	\begin{itemize}
		\item A homology class cannot break at a regular point, nor a spherical singularity of the flow (Proposition \ref{prop_no_reg} and \ref{prop_no_sphere}).
		\item If the initial condition $M(0)$ is a closed surface of non-zero genus, then some initial homology class must terminate at finite time (Remark \ref{rmk_any_genus}).
		\item Suppose $\{M(t)\}_{t\geq 0}$ is a MCF with only spherical and cylindrical singularities. If a homology class terminates at some time $T$, then it must break at $(x,T)$ for some cylindrical singularity $x\in M(T)$ (Theorem \ref{thm_exist_break_point}).
	\end{itemize}
	These properties are all crucial in proving the main theorems.
	
	Finally, let us provide a precise definition of  ``inward (or outward) torus neck will pinch'' in Theorem \ref{thm:main1}.
	\begin{defn}\label{defn_inward_neck_pinch}
		Given a torus $M$ in $\R^3$, let $a_0$ (resp. $b_0$) be a generator of the first homology group of the interior (resp. exterior) region of $M$, which is isomorphic to $\Z$ (see Figure \ref{fig:a0b0}).    
		We say that the {\it inward (resp. outward) torus neck of $M$ will pinch} if $a_0$ (resp. $b_0$) will terminate under MCF.
	\end{defn}
	\begin{figure}[h]
		\centering
		\makebox[\textwidth][c]{\includegraphics[width=1.7in]{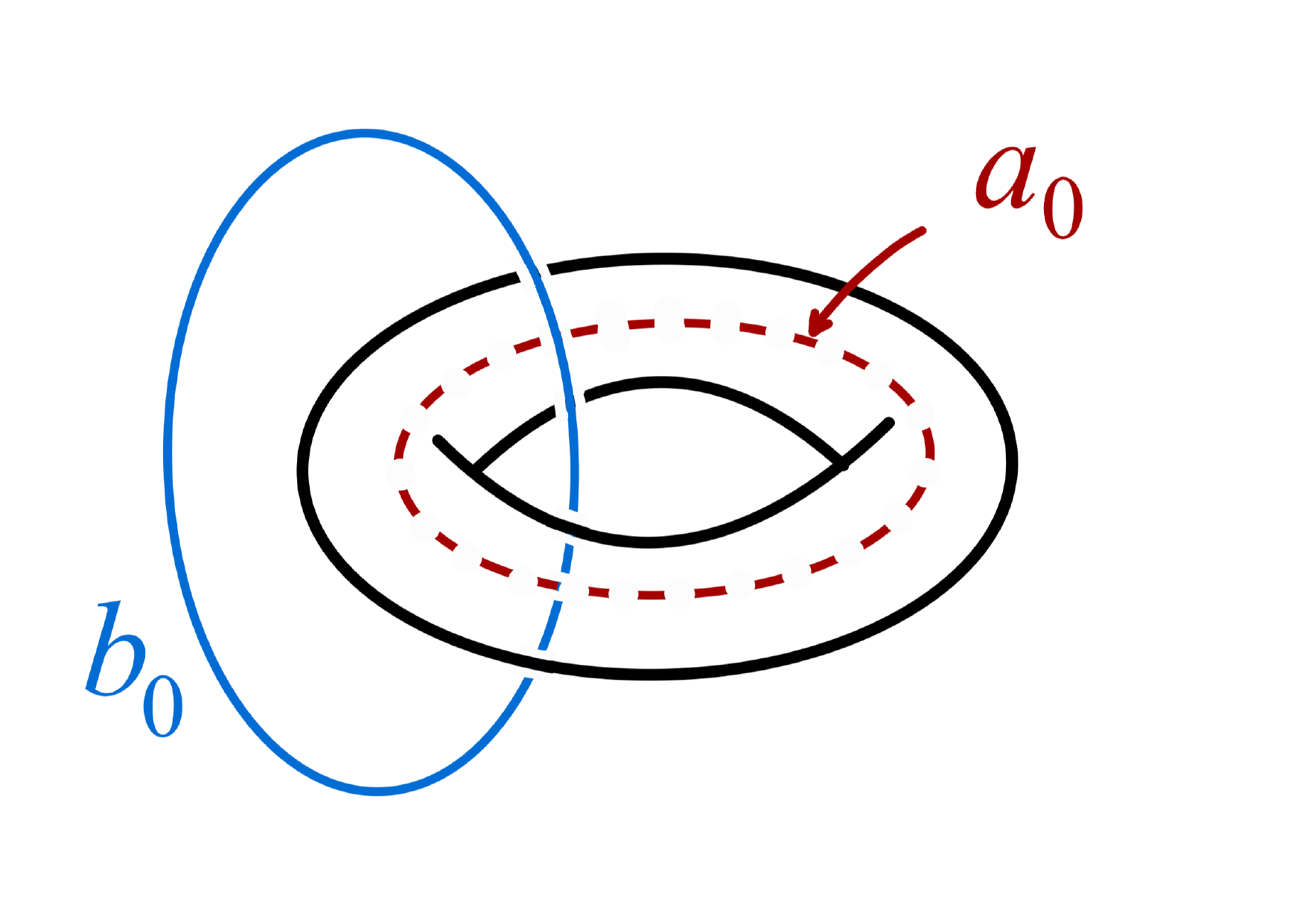}}
		\caption{}
		\label{fig:a0b0}
	\end{figure}
	
	Clearly, $a_0$ (and $b_0$) is unique up to a sign, and the above notion is independent of which sign we choose.

	\subsection{Structure of cylindrical singularities} Once we establish the topological concepts to keep track of the homology classes under the MCF, another challenge arises: We need to understand what happens to these homology classes as the MCF encounters the cylindrical singularities.
	
	Intuitively, a cylindrical singularity is just like a neck, and as we approach the singular time, the neck pinches as in Figure \ref{fig:first_pic}. However, the actual situation can be much more complicated. For example, consider the MCF of the boundary of a tubular neighborhood of a rotationally symmetric $S^1$ in $\R^3$. It will shrink to a singular set that is a rotationally symmetric $S^1$, where each singular point is cylindrical, but it does not look like a neck pinching.
	
	First, one has the partial regularity of the singular set of cylindrical singularities, studied by White \cite{White97_Stratif} and Colding-Minicozzi \cite{CM15_Lojasiewicz, ColdingMinicozzi16_SingularSet}. This allows us to control the singular set. We can establish the compactness of the singular set of cylindrical singularities that are inward (or outward), and know that they only appear for a zero-measured set of time. 
	
	Another important theory is the mean convex neighborhood theory of cylindrical singularities by Choi-Haslhofer-Hershkovits \cite{ChoiHaslhoferHershkovits18_MeanConvNeighb}, and a generalized version by Choi-Haslhofer-Hershkovits-White \cite{ChoiHaslhoferHershkovitsWhite22_AncientMCF}. In these works, they classified the possible limit flows at a cylindrical singularity. As a consequence, they derived a canonical neighborhood theorem at a cylindrical singularity, which describes the local behavior of the MCF.
	
	We will study the local behavior of MCF at cylindrical singularities based on these two theories. Nevertheless, the particular local behavior we need to understand does not directly come from \cite{ChoiHaslhoferHershkovits18_MeanConvNeighb, ChoiHaslhoferHershkovitsWhite22_AncientMCF}. We present these relevant results in \S\ref{SS:MCF through cylindrical and spherical singularities}.

	\subsection{Outline of proofs}
	\subsubsection{Theorem \ref{thm:main1} } 
	We will prove by contradiction. 
	For each $s\in [0,1]$, let $\cM^s=\{M^s(t)\}_{t\geq 0}$ be the MCF (more precisely, a level set flow) with $M^s(0)=M^s$ as its initial condition. Let $a_0$ (resp. $b_0)$ be a generator of the first homology group of the inside (resp. outside) region of each torus $M^s$ (recall Definition  \ref{defn_inward_neck_pinch}). Assuming that Theorem  \ref{thm:main1} were false, $\cM^s$ would be a MCF through cylindrical and spherical singularities for each $s$. This flow is unique and well-defined by  Choi-Haslhofer-Hershkovits \cite{ChoiHaslhoferHershkovits18_MeanConvNeighb}. Next, we show that for each $s$, either $a_0$ or $b_0$ will terminate, but {\it not both}. This claim relies on the fact, mentioned above, that if a homology class will terminate, it must break at a neck singularity. This crucial fact is established based on the mean convex neighborhood theorem and the canonical neighborhood theorem by Choi-Haslhofer-Hershkovits-White \cite{ChoiHaslhoferHershkovits18_MeanConvNeighb, ChoiHaslhoferHershkovitsWhite22_AncientMCF}. 
	
	Thus, we can partition $[0,1]$ into a disjoint union $A\sqcup B$, where $A$ is the set of $s$ for which $a_0$ will terminate, and $B$ is the set of $s$ for which $b_0$ will terminate. Furthermore, we will show that $A$ and $B$ are both closed sets. Recall that we are given $0\in A$ and $1\in B$. Since $[0,1]$ is a connected interval, this leads to a contradiction.
	
	\subsubsection{Theorem \ref{thm:main2}} 
	We can apply Theorem \ref{thm:main1} to prove Theorem \ref{thm:main2}, provided that we can show the inward torus neck will pinch (i.e., $a_0$ will terminate) for the starting flow ($s=0$), and the outward torus neck will pinch (i.e., $b_0$ will terminate) for the ending flow ($s=1$).
	To prove, for instance, that $a_0$ will terminate for the starting flow, we recall that $M^0(0)$ lies strictly inside the shrinker  $\Sigma$. Then we will run MCF to these two surfaces and \emph{use the avoidance principle, which states that the distance between the two surfaces will increase, to conclude that $a_0$ must terminate.}
	
	\subsubsection{Theorem \ref{thm_genus1_least_entropy}}
	Let $\Sigma$ be an embedded, genus one shrinker with the least entropy. Suppose by contradiction that it is compact with index at least $6$. 
	Disregarding the four (orthogonal) deformations induced by translation and scaling, there are still two other deformations that decrease the entropy, one of which is the one-sided deformation given by the first eigenfunction of the Jacobi operator. Thus, we can construct a one-parameter family of tori with entropy less than $\Sigma$, such that the starting torus is inside $\Sigma$, and the ending torus is outside $\Sigma$.
	Then, as in the proof of Theorem \ref{thm:main2}, we apply Theorem \ref{thm:main1} to obtain \emph{another} genus one shrinker with less entropy than $\Sigma$. This contradicts the definition of $\Sigma$.
	
	\subsubsection{Theorem \ref{thm_eternal}}
	According to Liu \cite{Liu2016_index_shrinker}, the shrinking doughnut $\mathbb{T}$ has an index of at least $7$. Consequently, based on the result of Choi-Mantoulidis \cite{ChoiMantoulidis22AncientGradientFlows}, there exists a one-parameter family of ancient \emph{rescaled} MCF originating from $\mathbb T$ that decreases the entropy. As before, we can apply Theorem \ref{thm:main1} to immediately obtain the desired genus one, self-shrinking tangent flow with lower entropy.
	
	\subsection{Open questions}
	
	We propose several open problems. The first one is motivated by generic MCF and min-max theory. 
	
	\begin{conj}
		There exists an embedded, genus one, index $5$ self-shrinker in $\R^3$ that is the  ``second most generic" one.
	\end{conj}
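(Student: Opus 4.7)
The plan is to realize such a shrinker as the output of a one-parameter min-max procedure built on Theorem~\ref{thm:main1}, interpreting the phrase ``second most generic'' as referring to critical points of the entropy functional detected by a sweepout with exactly one essential parameter. The index decomposition $5 = 4 + 1$ is consistent with this heuristic: four unstable directions come from the canonical translations and scaling, and one additional essential unstable direction is supplied by the min-max sweepout.

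First, I would fix a reference compact embedded genus one shrinker (say $\mathbb{T}$) and consider the class $\Pi$ of smooth one-parameter families $\{M^s\}_{s\in[0,1]}$ of embedded tori satisfying the boundary conditions of Theorem~\ref{thm:main2}, that is, $M^0$ strictly inside and $M^1$ strictly outside $\mathbb{T}$. Define the min-max width
\[
W := \inf_{\{M^s\}\in\Pi}\ \sup_{s\in[0,1]}\Ent(M^s).
\]
For any minimizing sequence of sweepouts, Theorem~\ref{thm:main2} produces, for each sweepout, a critical parameter whose MCF develops a multiplicity one embedded genus one self-shrinking tangent flow. I would then extract a limit across the sequence using the compactness of embedded shrinkers with entropy less than $2$ \cite{lee2021compactness, SunWang2020compactness}, together with Brendle's rigidity \cite{Brendle16_genus0} ruling out genus $0$ limits, to obtain an embedded genus one shrinker $\Sigma$ with $\Ent(\Sigma)=W$.

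The remaining task is to verify $\textrm{index}(\Sigma) = 5$. For the upper bound, one argues by contradiction: if $\Sigma$ admitted a sixth independent unstable eigenfunction of the Jacobi operator beyond the four canonical ones, then combining this eigenfunction with an optimal sweepout (after suitable gluing with the canonical deformations) would produce a new sweepout in $\Pi$ of strictly smaller maximum entropy, contradicting the definition of $W$. For the lower bound, Theorem~\ref{thm_genus1_least_entropy} applied to the least entropy embedded genus one shrinker, together with the entropy bound $W < \Ent(\mathbb{T})$ inherited from Theorem~\ref{thm_eternal}, should force $\Sigma$ to be compact and of index at least $5$.

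The main obstacle, and the reason this conjecture remains open, is setting up the min-max theory rigorously in the MCF setting and establishing the index upper bound. Unlike the minimal surface context, where Almgren-Pitts and Marques-Neves provide a robust framework relating sweepout dimension to Morse index, no comparable machinery is yet available for the entropy functional on self-shrinkers; in particular, extracting a shrinker limit from a min-max sequence of MCFs requires passing through singularity analysis rather than a direct variational argument. A secondary difficulty is ruling out non-compact minimizers such as the Buzano-Nguyen-Schulz example, which may require extending the homological termination and breakage theory of \S\ref{sect_topo_mcf} to track genus through sequences of MCFs whose limit shrinker could be non-compact.
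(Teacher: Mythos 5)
This statement is labeled as a \textbf{conjecture} in the paper — it is an open problem, and the paper offers no proof, only the remark that Theorem~\ref{thm_genus1_least_entropy} is ``evidence of a very `local' version'' of it. You correctly recognize this and honestly flag where the gaps are, so there is no disagreement about the status. That said, a few aspects of your sketch deserve pushback.

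First, you are working with a different notion of ``second most generic'' than the one the paper actually defines. The paper's definition is dynamical: $\Sigma$ is second most generic if \emph{every} one-parameter family of initial conditions can be perturbed so that the resulting MCFs encounter only cylindrical, spherical, or $\Sigma$-modeled singularities. You substitute a min-max interpretation (the entropy-width of sweepouts with one essential parameter). These are plausibly related, but they are not the same, and a rigorous proof would need to bridge them: an entropy minimizer among genus one shrinkers appearing as tangent flows need not automatically satisfy the perturbation-stability property, and conversely. This is an additional missing ingredient beyond the min-max machinery itself.

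Second, your compactness step is backwards. You invoke Theorem~\ref{thm_genus1_least_entropy} to ``force $\Sigma$ to be compact and of index at least $5$,'' but that theorem says the opposite: the least-entropy genus one shrinker is either \emph{non-compact} or compact of index $5$. Combined with the remark in the introduction that the non-compact Buzano--Nguyen--Schulz example is numerically expected to have Gaussian area below the Angenent torus, the theorem actually strengthens the worry that your min-max limit is non-compact, rather than ruling it out. (The index lower bound $\geq 5$ for any closed non-spherical shrinker is already due to Colding--Minicozzi, as cited in the paper's proof of Theorem~\ref{thm_genus1_least_entropy}; you do not need $W<\Ent(\mathbb{T})$ for it.)

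Third, the index upper bound argument — that a sixth unstable eigenfunction would produce a cheaper sweepout — is the heart of the matter and is currently only a heuristic. In the minimal-surface min-max setting, turning this heuristic into a theorem took substantial work (Marques--Neves and successors) and relied on a variational structure and regularity theory that have no analogue yet for the entropy functional on shrinkers realized as singularity models of parametrized flows. Your proposal names this obstruction, which is appropriate; but it is worth being explicit that \emph{none} of the three steps (extracting a limit shrinker, establishing compactness, bounding the index above by $5$) is currently within reach, so the proposal is better read as a plausible roadmap than as a proof with identified gaps.
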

	
	We say a self-shrinker $\Sigma$ is the ``second most generic", after the generic ones (the cylinder and the sphere), in the following sense:
	Suppose we have a one-parameter family of embedded surfaces $\{M^s\}_{s\in [0,1]}$ in $\R^3$. Then, we can perturb {\it this family} such that when we run MCF for every $M^s$, every singularity is either cylindrical, spherical, or modeled by $\Sigma$. 
	
	Note that Theorem \ref{thm_genus1_least_entropy} and its proof can be seen as evidence of a very ``local'' version of this conjecture: They say that any closed, embedded, genus one self-shrinker with an index of at least $6$ is not the second most generic.
	
	Now, we note that  Theorem \ref{thm:main1} does not hold for initial conditions with genus greater than one, see Remark \ref{rmk_main_thm_fail}. 
	
	\begin{oqn}\label{conj_higher_genus}
		Can  Theorem \ref{thm:main1} be generalized to the higher genus case, possibly by considering higher parameter families of initial conditions?
	\end{oqn}
	
	Finally, notice that many concepts that we introduce in this paper heavily rely on the extrinsic structure of mean curvature flow.
	
	\begin{oqn}
		Can the concepts of homology descent, homology termination, and homology breakage be adapted to the setting of Ricci flow?
	\end{oqn}
	
	\subsection{Organizations.} In \S \ref{sect_prelim}, we will introduce the preliminary materials, including a refined canonical neighborhood theorem. In \S \ref{sect_general_results}, we will define the concepts of homology descent, homology termination, and homology breakage, and prove some relevant basic propositions. In \S \ref{sect_different_termination_time}, we focus on the case of MCF through cylindrical and spherical singularities, with torus as the initial condition.
	In \S \ref{sect_proof}, we prove the main theorems.
	
	\subsection*{Acknowledgement} We would like to thank Professor Andr\'e Neves for all the fruitful discussions and his constant support. We are grateful to Zhihan Wang for the valuable conversations. And the first author would also like to thank Chi Cheuk Tsang for the helpful discussions. We are also grateful to anonymous referees for many helpful comments and suggestions, especially the work by Edelen and White. We want to thank Professor Reto Buzano for bringing our attention to their work \cite{BuzanoNguyenSchulz+2025+35+52}.

	\section{Preliminaries}\label{sect_prelim}
	In \S \ref{sect_prelim} we will set up the language and provide the necessary background to define MCF through cylindrical and spherical singularities. 
	
	The classical \emph{mean curvature flow} is a family of hypersurfaces $\{M(t)\}_{t\in[0,T)}$ in $\R^{n+1}$ satisfying the equation
	\begin{equation}
		\partial_t x=\vec H(x),
	\end{equation}
	where $x$ is the position vector and $\vec H$ is the mean curvature vector. When the hypersurface is not $C^2$, we can not define the mean curvature flow using this PDE, and we need to use some weak notions to define the flow.
	\subsection{Weak solutions of MCF}
	Throughout this paper, we will focus on two different types of weak solutions of MCF. One is a set-theoretic weak solution defined by the {\it level set flow}, and another one is a geometric measure theoretic weak solution called {\it Brakke flow}. Readers interested in detailed discussions of level set flows can refer to \cite{EvansSpruck91, Ilmanen92_LSF}, while those interested in Brakke flow can refer to \cite{Brakke78, Ilmanen94_EllipReg}.
	
	The level set flow equation is a degenerate parabolic equation
	\begin{equation}\label{eq:LSF}
		\partial_t u=\Delta u-\left(\frac{D^2u(Du,Du)}{|Du|^2}\right).
	\end{equation}
	Suppose $M(0)$ is a closed hypersurface in $\R^{n+1}$, then if $u(\cdot,t)$ solves \eqref{eq:LSF} with $M(0)=\{x\in\R^{n+1}:u(\cdot,0)=0\}$, then $M(t):=\{x\in\R^{n+1}:u(\cdot,0)=0\}$ can be viewed as a weak solution to MCF. In particular, when $M(t)$ is smooth, this weak solution coincides with the classical solution of MCF.
	
	The level set flow was introduced by Osher-Sethian in \cite{OsherSethian88}. Chen-Giga-Goto \cite{ChenGigaGoto91_LSF} and Evans-Spruck \cite{EvansSpruck91} introduced the viscosity solutions to equation \eqref{eq:LSF}, and these solutions are Lipschitz. Throughout this paper, when we refer to a {\it level set function} or a solution to the level set flow equation, we mean a viscosity solution to equation \eqref{eq:LSF}.
	
	The set-theoretic solution of a MCF will be called the {\it level set flow} or {\it biggest flow}. These notions are used by Ilmanen \cite{Ilmanen92_LSF} and White \cite{White95_WSF_Top, White00, White03}. The term ``biggest flow'' is used to avoid ambiguity when dealing with weak solutions for noncompact flows. 
	Such a weak solution may have a nonempty interior. In this case, we say the level set flow \emph{fattens}.
	
	\bigskip
	
	Brakke flow is defined using geometric measure theory. Let $X$ be a complete manifold without boundary. The Brakke flow is a family of Radon measures $\{\mu_t\}_{t\geq 0}$, such that for any test function $\phi\in C_c^2(X)$ with $\phi\geq 0$,
	\[
	\limsup_{s\to t}\frac{\mu_s(\phi)-\mu_t(\phi)}{s-t}\leq \int(-\phi H^2+\nabla^\perp\cdot\vec{H})d\mu_t,
	\]
	where $\vec{H}$ is the mean curvature vector of $\mu_t$ whenever $\mu_t$ is rectifiable and has $L^2$-mean curvature in the varifold sense. Otherwise, the right-hand side is defined to be $-\infty$. 
	
	In general, the Brakke flow starting from a given initial data is not unique. We will be interested in unit regular cyclic integral Brakke flows. For detailed discussions on these notions, we refer the readers to \cite{White09_CurrentsVarifolds}. The existence of such a flow starting from a smooth surface is guaranteed by Ilmanen's elliptic regularization, see \cite{Ilmanen94_EllipReg}. These flows have a well-established compactness theory.
	
	\subsection{Setting and notations}\label{sect_notation_setting}
	Let $M(0)$ be a closed smooth $n$-dimensional hypersurface in $\R^{n+1}$ that bounds a compact set $K_\ins(0)$. Let $K_\out(0)=\overline{\R^{n+1}\backslash K_\ins(0)}$. Now, denote by $$\{M(t)\}_{t\geq 0},\{K_\ins(t)\}_{t\geq 0},\textrm{ and }\{K_\out(t)\}_{t\geq 0}$$  respectively the level set flow (i.e. the biggest flow)
	with initial condition $M(0),K_\ins(0)$, and $K_\out(0)$. Then we define their {\it spacetime tracks}
	\begin{align*}
		\mathcal M&=\{(x,t):x\in M(t),t\geq 0\},\\
		\cK_\ins&= \{(x,t):x\in K_\ins(t),t\geq 0\},\\
		\cK_\out&= \{(x,t):x\in K_\out(t),t\geq 0\}.
	\end{align*}
	We then define the {\it inner flow} of $M(0)$,
	$$M_\ins(t)=\{x:(x,t)\in \partial\cK_\ins\}$$
	and the {\it outer flow} of $M(0)$,
	$$M_\out(t)=\{x:(x,t)\in \partial\cK_\out\}.$$
	\begin{lem}\label{lem_relate_Kin_func}
		Let $u:\R^{n+1}\x[0,\infty)\to\R$ be a level set function of $\cM$, with $u(\cdot,0)\leq 0$ on $K_\ins(0)$. Then 
		$$\R^{n+1}\backslash K_\ins(t)=\{x:u(x,t)>0\},\;\;\R^{n+1}\backslash K_\out(t)=\{x:u(x,t)<0\}.$$
	\end{lem}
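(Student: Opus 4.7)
The plan is to reduce the lemma to the classical characterization of the biggest flow of a compact set: for any continuous function $v_0$ with $\{v_0\le 0\}$ equal to the given initial set, the biggest flow at time $t$ equals $\{x:v(x,t)\le 0\}$, where $v$ is the viscosity solution of \eqref{eq:LSF} with $v(\cdot,0)=v_0$ (see Chen-Giga-Goto \cite{ChenGigaGoto91_LSF} and Evans-Spruck \cite{EvansSpruck91}). Applied once to $K_\ins(0)$ and once, via the sign-flip symmetry of \eqref{eq:LSF}, to $K_\out(0)$, this will give both identities after taking complements.

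The first step is to pin down the sign of $u(\cdot,0)$ off of $M(0)$. As a level set function of $\cM$, one has $\{u(\cdot,0)=0\}=M(0)=\partial K_\ins(0)$. Combined with the hypothesis $u(\cdot,0)\le 0$ on $K_\ins(0)$, this forces $u(\cdot,0)<0$ on the interior of $K_\ins(0)$. The open set $\R^{n+1}\setminus K_\ins(0)$ is connected (it is the unbounded complement of a compact domain with connected boundary in $\R^{n+1}$), and the continuous function $u(\cdot,0)$ has no zeros there, so it has a constant sign; if that sign were negative, then $\{u(\cdot,0)=0\}$ would be strictly smaller than $M(0)$, contradicting that $u$ is a level set function of $\cM$. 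Hence $u(\cdot,0)>0$ on $\R^{n+1}\setminus K_\ins(0)$, so $\{u(\cdot,0)\le 0\}=K_\ins(0)$ and $\{u(\cdot,0)\ge 0\}=K_\out(0)$.

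The second step is to apply the classical characterization twice. Since $u$ is a viscosity solution of \eqref{eq:LSF} with $\{u(\cdot,0)\le 0\}=K_\ins(0)$, the biggest flow of $K_\ins(0)$ coincides with $\{x:u(x,t)\le 0\}$, giving $K_\ins(t)=\{u(\cdot,t)\le 0\}$. Equation \eqref{eq:LSF} is invariant under $u\mapsto -u$, so $-u$ is a viscosity solution with $\{-u(\cdot,0)\le 0\}=K_\out(0)$; the same theorem gives $K_\out(t)=\{x:-u(x,t)\le 0\}=\{u(\cdot,t)\ge 0\}$. Taking complements of these two identities yields the two stated equalities.

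I do not anticipate a serious obstacle beyond careful bookkeeping of signs. The only delicate point is the use of $\{u(\cdot,0)=0\}=M(0)$, which relies on interpreting \textquotedblleft level set function of $\cM$\textquotedblright{} as an equality (not merely an inclusion) between the zero set of $u$ and the spacetime track; without this, the hypothesis $u(\cdot,0)\le 0$ on $K_\ins(0)$ could fail to separate the two sides of $M(0)$. Under the standard convention, the argument reduces to invoking the existence and uniqueness theorem of \cite{ChenGigaGoto91_LSF,EvansSpruck91}, together with the well-known fact that the biggest flow is independent of the particular initial representative chosen.
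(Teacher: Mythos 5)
Your overall strategy coincides with the paper's: reduce the claim to the fact that a level set function whose zero set equals a given compact set represents that set's biggest flow, applied once for $K_\ins$ and once (using the odd symmetry of \eqref{eq:LSF}) for $K_\out$. The ``classical characterization'' you invoke --- that $\{v(\cdot,t)\le 0\}$ is the biggest flow of $\{v_0\le 0\}$ --- is exactly what the paper derives via Ilmanen's relabelling lemma (\cite[Lemma 3.2]{Ilmanen92_LSF}), composing $u$ with $\Phi$ (where $\Phi(x)=x$ for $x>0$ and $\Phi(x)=0$ otherwise), so that the zero set of $\Phi\circ u$ becomes the sublevel set $\{u\le 0\}$; the Chen--Giga--Goto and Evans--Spruck uniqueness results concern zero sets, and the passage from zero sets to sublevel sets is precisely that relabelling step. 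So structurally the two proofs are the same.

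Two of your steps, however, do not hold up. First, in Step~1 you argue that $u(\cdot,0)>0$ on $\R^{n+1}\setminus K_\ins(0)$ because otherwise $\{u(\cdot,0)=0\}$ would be strictly smaller than $M(0)$. That inference is incorrect: if $u(\cdot,0)<0$ on both the interior of $K_\ins(0)$ and on $\R^{n+1}\setminus K_\ins(0)$, the zero set is still exactly $\partial K_\ins(0)=M(0)$, so no contradiction with $u$ being a level set function of $\cM$ arises. (Such a $u$ would have $\{u(\cdot,t)>0\}\equiv\emptyset$; the equality $\{u(\cdot,0)\le 0\}=K_\ins(0)$ is not a formal consequence of the stated hypothesis alone --- it relies on the usual normalization that a level set function of a compact hypersurface is positive near infinity, which the paper's proof also takes for granted when it ``notes $v(\cdot,0)=0$ precisely on $K_\ins(0)$''.) Your attempt to pin this down is welcome, but the argument as written does not establish it. Second, you apply ``the same theorem'' to $K_\out(0)$, but $K_\out(0)$ is non-compact, and uniqueness of the biggest flow of a non-compact set is not automatic. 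The paper flags this explicitly and resolves it by citing Ilmanen's result that the biggest flow is unique when all the other level sets are compact. Your proposal silently applies the compact-set statement to a non-compact set, so the second identity is not justified as written.
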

	\begin{proof}
		For the first claim, we let $\Phi:\R\to\R$ by $\Phi(x)=x$ if $x>0$ and $\Phi(x)=0$ otherwise. By the relabelling lemma (\cite[Lemma 3.2]{Ilmanen92_LSF}), $v:=\Phi\circ u$ also satisfies the level set equation. Noting $v(\cdot,0)=0$ precisely on $K_\ins(0)$, which is compact, we know by the uniqueness of level set flow that $v$ is a level set function of $\cK_\ins$. Hence, 
		$$\R^{n+1}\backslash K_\ins(t)=\{x:u(x,t)>0\}.$$
		
		The second claim is similar. We let $\Psi:\R\to\R$ by $\Psi(x)=x$ if $x<0$ and $\Psi(x)=0$ otherwise. Then $v=\Psi\circ u$ satisfies the level set equation by the relabelling lemma, and $\{x:u(x,t)\geq 0\}=\{x:v(x,t)=0\}$, which is non-compact. Nevertheless, by Ilmanen \cite{Ilmanen92_LSF}, because any level sets other than $K_{\out}$ are compact, $\{x:v(x,t)=0\}$ is the biggest flow, which is unique. Then the second claim will follow.
	\end{proof}
	Finally, we denote
	$$W_\ins(t)=\R^{n+1}\backslash K_\out(t),\;\;W_\out(t)=\R^{n+1}\backslash K_\ins(t), \;\; W(t)=W_\ins(t)\cup W_\out(t).$$
	In fact, we will further define the spacetime track
	$$W_\ins[t_0,t_1]=\bigcup_{t\in [t_0,t_1]}W_\ins(t)\x \{t\},$$
	and we can similarly define $W_\out[t_0,t_1]$ and $W[t_0,t_1]$. The reason we care about these sets is that their topological changes are described by White \cite{White95_WSF_Top}, which will be crucial for us later. We remark that, when we need to specify the flow $\cM$, we will add a superscript $^\cM$ to the symbols: e.g. we will write $W^\cM_\ins(t)$ in place of $W_\ins(t)$.

	Let $(x,T)$ be a singularity of $\cM$, and $\lambda_j\to\infty$. Then any subsequential limit, in the sense of Brakke flow (see \cite[Section 7]{Ilmanen94_EllipReg}), of the rescaled flows
	$$\{\lambda_j(M(\lambda^{-2}_jt+T)-x)\}_{-\lambda^2_jT< t<0}$$
	is called a {\it tangent flow} at $(x,T)$. The tangent flow is unique if it is the shrinking cylinder or has only conical ends, by  Colding-Minicozzi \cite{CM15_Lojasiewicz} and Chodosh-Choi-Schulze \cite{ChodoshSchulze21_ConicalShrinkerUniqueness} 
	respectively.  Moreover, the convergence is in $C_\loc^\infty$ by Brakke's regularity theorem (see \cite{White05_MCFReg}).
	
	Now, following \cite{ChoiHaslhoferHershkovitsWhite22_AncientMCF}, we call $(x,T)$ an {\it inward neck singularity} of $\cM$ if, as $\lambda\to\infty$, the rescaled flows
	$$\{\lambda(K_\ins(\lambda^{-2}t+T)-x)\}_{-\lambda^2T< t<0}$$
	converge locally smoothly with multiplicity one
	to the  solid shrinking cylinder
	$$\{B^{n}(\sqrt{-2(n-1)t})\x\R\}_{t<0}$$
	up to rotation and translation.
	Similarly, we can define an {\it outward neck singularity}. If, instead, those rescaled flows converge with multiplicity one to the solid shrinking ball
	$$\{B^{n+1}(\sqrt{-2nt})\}_{t<0}$$
	up to translation,
	then we call $(x,T)$ an {\it inward spherical singularity.} We can again similarly  define an {\it outward spherical singularity}.

	\subsection{MCF through cylindrical and spherical singularities}\label{SS:MCF through cylindrical and spherical singularities}

	If every singularity of $\cM$ is a neck or a spherical singularity, then we call $\cM$ a {\it MCF through cylindrical and spherical singularities}. In this case, building on Hershkovits-White  \cite{HershkovitsWhite20_Nonfattening},    Choi-Haslhofer-Hershkovits-White showed  $M(t), M_\ins(t)$, and $M_\out (t)$ are all the same \cite[Theorem 1.19]{ChoiHaslhoferHershkovitsWhite22_AncientMCF}, i.e. fattening does not occur.

	Neck singularities are well-understood after the work of many researchers \cite{HuiskenSinestrari99_Acta, HuiskenSinestrari99_CVPDE, White00, White03, ShengWang09_SingMCF, WangXJ11_ConvexMCF, Andrews12_Noncollapsing, Brendle15_inscribed, CM15_Lojasiewicz, HaslhoferKleiner17, ADS19, ADS20, ChoiHaslhoferHershkovits18_MeanConvNeighb, ChoiHaslhoferHershkovitsWhite22_AncientMCF}, among others. In Theorem \ref{thm_canonical_nbd}, we will state the canonical neighborhood theorem of Choi-Haslhofer-Hershkovits-White \cite{ChoiHaslhoferHershkovitsWhite22_AncientMCF}. Using that, we obtain a more detailed topological description of neck singularities in Theorem \ref{thm_topological_canonical}.

	\begin{defn}\label{def_spacetime_canoncial_nbd}
		Let  $X=(x,T)$ be a regular point in a level-set flow $\cM$. Let $\lambda:=|{\bf H}(x)|$. Suppose  there exists an ancient  MCF  $\{\Sigma(t)\}$ that is, up to spacetime translation and parabolic rescaling,  one of the following:
		\begin{itemize}
			\item the shrinking sphere,
			\item the shrinking cylinder with axis $\ell$,
			\item the translating bowl with axis $\ell$,
			\item the ancient oval with axis $\ell$,
		\end{itemize} 
		such that: 
		For each  $t\in (-1/\epsilon^2,0]$ and inside $B_{1/\epsilon}(0)\subset \R^{n+1}$, 
		$$\lambda(M(\lambda^{-2}t+T)-x)\textrm{ and }\Sigma(t)$$
		are $\epsilon$-close in $C^{\lfloor{1/\epsilon}\rfloor}$.
		Then, we call $$\left(T-\frac 1{ \lambda^2\epsilon^2},T\right]\x B_{\frac 1{\lambda\epsilon}}(x)$$
		an {\it $\epsilon$-canonical neighborhood of $X$ with axis $\ell$}.
	\end{defn}
	
	We will also have a weaker definition, for  situations when we focus on a time slice:
	\begin{defn}\label{def_space_canoncial_nbd}
		Let  $x$ be a regular point in a subset $M$.  Let $\lambda:=|{\bf H}(x)|$. Suppose there exists a hypersurface $\Sigma$ that is, up to  translation and rescaling, a time slice of one of the following:
		\begin{itemize}
			\item the shrinking sphere,
			\item the shrinking cylinder with axis $\ell$,
			\item the translating bowl with axis $\ell$,
			\item the ancient oval with axis $\ell$,
		\end{itemize} 
		and such  that:  Inside $B_{1/\epsilon}(0)\subset \R^{n+1}$, 
		$\lambda(M-x)\textrm{ and }\Sigma$
		are $\epsilon$-close in $C^{\lfloor{1/\epsilon}\rfloor}$.
		Then, we call $B_{\frac 1{\lambda\epsilon}}(x)$
		an {\it $\epsilon$-canonical neighborhood of $x$ with axis $\ell$}.
	\end{defn}
	One can compare the above with the notion of $\epsilon$-canonical neighborhoods in 3-dimensional Ricci flow \cite[Lecture 2]{MorganFong10_ricci_flow_book}.
	
	\begin{thm}[Canonical neighborhood]\label{thm_canonical_nbd}
		Let  $(x,T)$ be a  neck singularity of a MCF through cylindrical and spherical singularities $\cM$, and $\ell$ be the axis of the cylindrical tangent flow at $(x,T)$. 
		Then for every $\epsilon>0$, there exists $\delta,\bar\delta>0$ such that every regular point of $\cM$ in $ B_{2\delta}(x)\x(T-\bar\delta ,T+\bar\delta )$ has an $\epsilon$-canonical neighborhood with axis $\ell$ in the sense of Definition \ref{def_spacetime_canoncial_nbd}.
	\end{thm}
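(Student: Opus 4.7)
The plan is a standard blow-up/compactness argument combined with the classification of ancient low-entropy mean-convex flows, with the added subtlety that one must track the axis. Argue by contradiction: suppose there exist $\epsilon>0$ and sequences $\delta_k,\bar\delta_k\to 0$ together with regular spacetime points $X_k=(x_k,t_k)\in B_{2\delta_k}(x)\times(T-\bar\delta_k,T+\bar\delta_k)$ of $\cM$ none of which admits an $\epsilon$-canonical neighborhood with axis $\ell$. Set $\lambda_k:=|\vec H(x_k,t_k)|$; by the usual pseudolocality/Brakke regularity considerations and the fact that $(x,T)$ is a singular point, one has $\lambda_k\to\infty$ (otherwise, a subsequence would converge smoothly with bounded curvature, producing a regular point at $(x,T)$).

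Next, parabolically rescale $\cM$ by the factor $\lambda_k$ around $X_k$ to obtain a sequence of unit-regular integral Brakke flows $\cM_k$ with $|\vec H|=1$ at the spacetime origin. Since $X_k\to(x,T)$, Huisken's monotonicity formula gives a uniform density bound, and the compactness theorem for integral Brakke flows (Ilmanen) furnishes a subsequential limit $\cM_\infty$ which is an ancient integral Brakke flow through the origin. The mean convex neighborhood theorem of Choi--Haslhofer--Hershkovits \cite{ChoiHaslhoferHershkovits18_MeanConvNeighb}, generalized in \cite{ChoiHaslhoferHershkovitsWhite22_AncientMCF}, applies in a spacetime neighborhood of $(x,T)$ and implies that on every such scale the flow is two-sided (noncollapsed) and, after restricting to one side, is mean-convex. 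Passing this property to the limit, $\cM_\infty$ is an ancient, noncollapsed, unit-regular, cyclic integral Brakke flow in $\R^{n+1}$; hence by the Brendle--Choi / Angenent--Daskalopoulos--Sesum classification (as formulated in \cite{ChoiHaslhoferHershkovitsWhite22_AncientMCF}), $\cM_\infty$ is, up to rigid motion and parabolic rescaling, one of the four models: shrinking sphere, shrinking cylinder, translating bowl, or ancient oval. Since convergence of unit-regular Brakke flows to a smooth ancient solution is smooth on compact subsets, this gives an $\epsilon$-closeness of $\cM_k$ to one of the four models on $B_{1/\epsilon}(0)\times(-1/\epsilon^2,0]$ for all $k$ large, contradicting the choice of $X_k$, \emph{provided the axis of the model coincides with $\ell$}.

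Matching the axis is the step I expect to be the main obstacle. The idea is to compare the limit model produced by the rescalings around $X_k$ with the cylindrical tangent flow at $(x,T)$. Since $X_k\to(x,T)$ and $\lambda_k\to\infty$, the spacetime neighborhoods used to define $\epsilon$-canonical neighborhoods around $X_k$ are dwarfed, after further rescaling, by the tangent-flow scale at $(x,T)$. Thus the limit model $\cM_\infty$ must sit inside an arbitrarily large portion of the cylindrical tangent flow with axis $\ell$; among the four model solutions, the only way to be locally well-approximated by a cylinder with axis $\ell$ on arbitrarily large spatial scales is to have the same axis $\ell$ (for the translating bowl and ancient oval, their asymptotic cylindrical ends determine the axis). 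Formally, one picks a sequence of scales $r_k\to\infty$ with $r_k\ll\lambda_k\bar\delta_k^{1/2}$ and compares the doubly-rescaled flow with the tangent flow using Huisken's monotonicity and uniqueness of the cylindrical tangent flow (Colding--Minicozzi \cite{CM15_Lojasiewicz}), forcing the axis of $\cM_\infty$ to be parallel to $\ell$. Together with the previous paragraph, this yields the required $\epsilon$-canonical neighborhood with axis $\ell$ and completes the contradiction.
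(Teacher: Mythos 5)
The paper's proof of this theorem is a one-line citation: it simply invokes \cite[Corollary~1.18]{ChoiHaslhoferHershkovitsWhite22_AncientMCF}, together with the remark that all limit flows at $(x,T)$ share the axis $\ell$ (citing p.~163 of the same paper for the latter). Your proposal instead re-derives the statement from scratch via blow-up and compactness; this is not a different \emph{mathematical} route so much as a reconstruction of the very argument that Choi--Haslhofer--Hershkovits--White use to establish Corollary~1.18, so the substance agrees even though the exposition diverges.

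Two points in your reconstruction are softer than they should be. First, your argument that $\lambda_k\to\infty$ is not correct as written: a bound on $|\vec H|$ along a sequence of points does not by itself control the full second fundamental form, and pseudolocality/Brakke regularity do not supply the missing estimate. The actual input is that the mean convex neighborhood theorem gives $\alpha$-noncollapsing and the Haslhofer--Kleiner-type local curvature estimate $|A|\le CH$ inside a spacetime neighborhood of the neck singularity, and \emph{then} a bound on $H$ along $X_k\to(x,T)$ would yield a uniform parabolic neighborhood with bounded geometry, contradicting singularity at $(x,T)$. (This is also precisely the step that bridges the gap between the hypothesis $H(X)\ge 1/\delta$ in Corollary~1.18 of CHHW and the unconditional statement of Theorem~\ref{thm_canonical_nbd}.) Second, your axis-matching paragraph is only a sketch, which you acknowledge; the clean statement you want is that every limit flow at $(x,T)$ has the same asymptotic cylindrical axis as the tangent flow, which is exactly what CHHW record on p.~163 and what the paper quotes. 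With those two repairs your argument is complete, but it is worth noting that they are not cosmetic: as written, the $\lambda_k\to\infty$ step is a genuine gap, while the axis step is an acknowledged omission.
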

	We used balls of radius $2\delta$ (instead of $\delta$): This is solely for the sake of notational convenience, so that it can be directly quoted in Theorem  \ref{thm_topological_canonical}.
	\begin{proof}
		This is from  \cite[Corollary 1.18]{ChoiHaslhoferHershkovitsWhite22_AncientMCF}. Note that all limit flows at $(x,T)$ have the same axis \cite[p.163]{ChoiHaslhoferHershkovitsWhite22_AncientMCF}.
	\end{proof}
	
	\subsection{Consequence of almost all time regularity}
	Recall that throughout this paper, a cylindrical singularity has tangent flow given by the cylinder $S^{n-1}\times\R$. By White's stratification of singular set of MCF (\cite{White97_Stratif, White03}), at almost every time, the time-slice of a MCF through cylindrical and spherical singularities is smooth.
	Based on this, in items (\ref{thm_item_transverse}) - (\ref{thm_item_each_cnn_comp_genus0}) of the following theorem, we will obtain a topologically more refined picture of neck-pinches. The shapes of the surfaces described in items (\ref{thm_item_transverse}) - (\ref{thm_item_each_cnn_comp_genus0}) are illustrated in Figure \ref{fig::item_36}.
	
	\begin{figure}[h]
		\centering
		\makebox[\textwidth][c]{\includegraphics[width=4in]{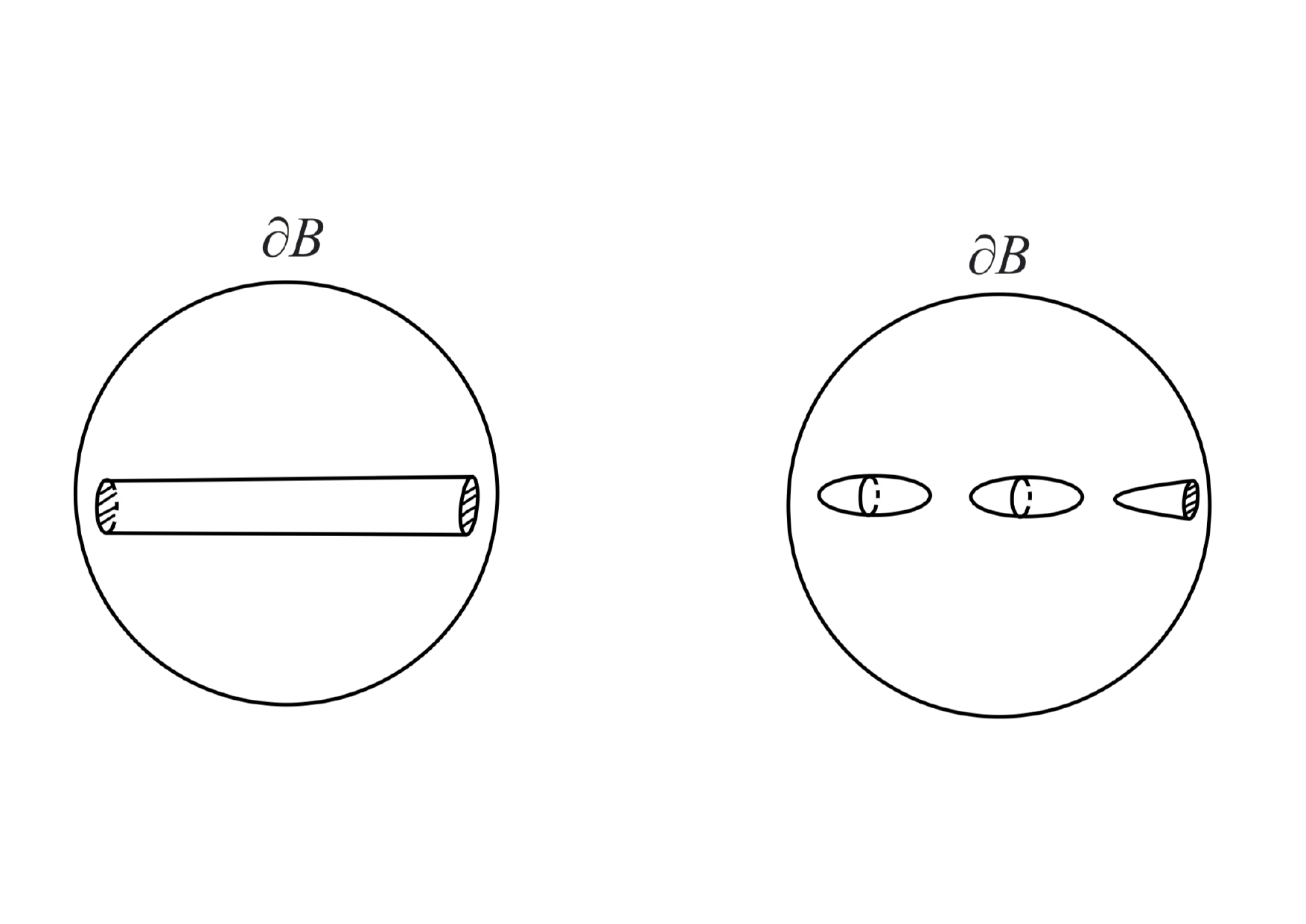}}
		\caption{}
		\label{fig::item_36}
	\end{figure}
	
	\begin{thm}\label{thm_topological_canonical} There exists a universal constant $R_0=R_0(n)$ with the following significance.
		Let  $(x,T)$ be an inward neck singularity of a MCF through cylindrical and spherical singularities $\cM$ in $\R^{n+1}$, and $\ell$ be the axis of the cylindrical tangent flow at $(x,T)$.
		For every $\delta_0>0$ and every $R>R_0$, there exists $\delta\in(0,\delta_0)$ and $\bar\delta>0$ such that:
		\begin{enumerate}
			\item \label{thm_item_is_colid_cylinder} Let $B=B_\delta(x)$. Then the set $M(T-\bar\delta)\cap B$
			\begin{itemize}
				\item is up to scaling and translation $\frac 1 {R}$-close  in $C^\infty$ to  the   cylinder ($\cong S^{n-1}\x\R$) in $B_R(0)$ with axis $\ell$ and radius $1$,
				\item and as a topological cylinder has  $K_\ins(T-\bar\delta)\cap B$ on its inside.
				\item $\bar\delta\to 0$ as $R\to\infty$.
			\end{itemize}
			\item \label{thm_item_convex} (Mean convex neighborhood) For every $T-\bar\delta <t_1<t_2<T+\bar\delta $, 
			$$K_\ins(t_2)\cap B\subset K_\ins(t_1)\backslash M(t_1).$$
		\end{enumerate}
		Moreover, there exists some countable dense set $J\subset [T-\bar\delta ,T+\bar\delta ]$  with $T-\bar \delta\in J$ such that we have for every $t\in J$:
		\begin{enumerate}\setcounter{enumi}{2}
			\item \label{thm_item_transverse}
			$M(t)$ is smooth, and  intersects $\partial B$ transversely.
			
			\item\label{thm_item_convex_loop} Each connected component of $K_\ins(t)\cap \partial B$ is a  convex $n$-ball in $\partial B$. 
			\item \label{thm_item_at_most_two_convex_disks} Denote the two connected components of $K_\ins(T-\bar\delta)\cap\partial B$ by $D_1$ and $D_2$. Then
			$M(t)\cap  D_i$ has at most one connected component for $i=1,2$. 
			\item \label{thm_item_each_cnn_comp_genus0}
			Let $K$ be a  connected component of $K_\ins(t)\cap B$. Then $K$ satisfies one of the following:
			\begin{itemize}
				\item $\partial K$ is a connected component of $M(t)\cap B$ that is a sphere.
				\item $\partial K$ consists of a connected component of $M(t)\cap B$ that is an $n$-ball and another ball on $\partial B$.
				\item $\partial K$ consists of a connected component of $M(t)\cap B$ that is a cylinder $\cong S^{n-1}\x(0,1)$ and two balls on $\partial B$.
			\end{itemize}
			
		\end{enumerate}
		And the case for outward neck singularities is analogous.
	\end{thm}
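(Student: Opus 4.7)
The plan is to extract items (1) and (2) directly from the canonical neighborhood theorem and the mean convex neighborhood theory, and to deduce items (3)--(6) via Sard-type and continuity arguments starting from the nearly-cylindrical picture at time $T-\bar\delta$. I would apply Theorem \ref{thm_canonical_nbd} with parameter $\epsilon=1/R$ at the inward neck singularity $(x,T)$, producing $\delta_1,\bar\delta_1>0$ so that every regular point of $\cM$ in $B_{2\delta_1}(x)\times(T-\bar\delta_1,T+\bar\delta_1)$ has an $\epsilon$-canonical neighborhood with axis $\ell$. Since the tangent flow at the inward neck singularity $(x,T)$ is the multiplicity-one shrinking cylinder with axis $\ell$, choosing $\bar\delta$ small enough so that the natural rescaling at time $T-\bar\delta$ matches the unit cylinder yields item (1); the ``inward'' hypothesis identifies the inside of that cylinder with $K_\ins$. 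Item (2) is the mean convex neighborhood conclusion of \cite{ChoiHaslhoferHershkovits18_MeanConvNeighb,ChoiHaslhoferHershkovitsWhite22_AncientMCF}, which applies precisely because each limit flow at $(x,T)$ is a shrinking cylinder, bowl, or ancient oval, all of which have strictly one-sided nested inside regions.

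For item (3), by White's stratification \cite{White97_Stratif,White03}, the set of times $t\in[T-\bar\delta,T+\bar\delta]$ at which $M(t)$ has a singular point in $B$ has Lebesgue measure zero; pick a countable dense subset $J_0$ of smooth times containing $T-\bar\delta$. For each $t\in J_0$, Sard's theorem applied to $y\mapsto |y-x|$ on the smooth hypersurface $M(t)$ gives a full-measure set of radii at which $M(t)$ meets the corresponding concentric sphere transversely; the countable intersection remains of full measure, so I may shrink $\delta$ within $(0,\delta_0)$ to guarantee transversality for every $t\in J_0$, and set $J:=J_0$.

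For items (4)--(6), I would argue inductively as $t$ varies in $J$ starting from $t=T-\bar\delta$. At $t=T-\bar\delta$, closeness to the unit cylinder forces $K_\ins(T-\bar\delta)\cap\partial B$ to be exactly two convex spherical caps $D_1,D_2$ and $M(T-\bar\delta)\cap B$ to be a single cylindrical component joining them. For $t\in J$ with $t>T-\bar\delta$, item (2) gives $K_\ins(t)\cap B\subset K_\ins(T-\bar\delta)$, so every connected component of $K_\ins(t)\cap\partial B$ lies inside some $D_i$; at each boundary point of $M(t)\cap\partial B$, the $\epsilon$-canonical neighborhood with axis $\ell$ realizes the surface locally as a nearly-planar cross-section of a cylinder, bowl, or ancient oval, all of which cut $\partial B$ transversely in a single convex cap. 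This proves item (4). For item (5), two connected components of $M(t)\cap D_i$ would bound two disjoint annular regions inside $D_i$ with contradictory $K_\ins$-membership forced by the one-sided mean convex nesting and the shared axis $\ell$. Finally, for item (6), given a connected component $K$ of $K_\ins(t)\cap B$, count the number $k\in\{0,1,2\}$ of caps on $\partial B$ contained in $\overline K$ using items (4)--(5); the canonical neighborhood structure of $M(t)\cap B$ combined with transversality, together with a projection along $\ell$, identifies the surface piece $\partial K\cap B$ as an $n$-sphere, an $n$-ball, or the cylinder $S^{n-1}\times(0,1)$ in the three respective cases.

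The main obstacle is the globalization in items (4)--(6): the canonical neighborhood theorem is a pointwise statement, whereas these items require a global topological classification over all of $B$. Configurations such as multiple nested or parallel pinches inside $B$ are a priori consistent with every pointwise model and must be excluded using the strict one-sided mean convex nesting together with the fact that every canonical neighborhood in the parabolic cylinder shares the axis $\ell$. The decisive bookkeeping device is the projection along $\ell$, which converts the axial rigidity of the local models into the claimed global statements about connected components.
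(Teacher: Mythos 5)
Items (1)--(4) of your proposal follow essentially the same route as the paper: canonical neighborhood theorem with $\epsilon = 1/R$, mean convex neighborhood theory, a.e.-time regularity plus Sard/transversality, and the local convexity of $K_\ins(t)\cap\partial B$ via the axial canonical models (the paper packages this last step as a quantitative lemma with a curvature bound $|A|>1/2$ on the canonical neighborhood, but the idea is the same).

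The genuine gap is in your argument for item (5). You assert that two connected components of $M(t)\cap D_i$ would "bound two disjoint annular regions inside $D_i$ with contradictory $K_\ins$-membership forced by the one-sided mean convex nesting and the shared axis $\ell$." But there is no such contradiction from topology or axial alignment alone: two disjoint convex caps inside $D_1$, both belonging to $K_\ins(t)$, are perfectly compatible with every pointwise canonical model sharing axis $\ell$ and with the mean convex nesting (think of a single connected region at time $T-\bar\delta$ that later bifurcates into two). Nothing in the local structure rules this out a priori. You correctly flag this as the "main obstacle" in your final paragraph, but the projection along $\ell$ does not resolve it—multiple parallel pinches all project consistently along $\ell$. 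What the paper actually does is considerably heavier: it defines $T_1$ as the supremum of times at which $M(t)\cap D_1$ is connected, shows that if $T_1<T+\bar\delta$ then $K_1:=\bigcap_{t<T_1}K_\ins(t)\cap D_1$ equals $K_\ins(T_1)\cap D_1$, proves that the set $K_1\setminus K_3$ (where $K_3$ is the union over times $t_i\downarrow T_1$) lies in $M(T_1)\cap D_1$ and must consist entirely of singular points (otherwise pseudolocality would keep $M(t)\cap D_1$ connected past $T_1$), then invokes the Colding--Minicozzi dimension bound on the singular set to conclude $M(T_1)\cap D_1$ has empty interior, and finally derives a contradiction via a largest-diameter-component argument. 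None of this machinery—pseudolocality, singular set dimension bounds, the density argument—appears in your sketch, and without it item (5) is not proved. Item (6) is also stated loosely (you need the Morse-theory-with-cross-sections argument, or something equivalent, to conclude the topological type of each component), but that one is more plausibly fillable from what you have.
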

	
	\begin{proof} We will just do the case of inward neck singularity.
		\part*{To obtain   (\ref{thm_item_is_colid_cylinder}) and (\ref{thm_item_convex}).}
		Let us first arbitrarily pick some $\epsilon,R>0$,  which we will further specify later. Let $\delta,\bar\delta>0$ be obtained from applying the canonical neighborhood theorem (Theorem \ref{thm_canonical_nbd}) to $(x,T)$ and $\epsilon$. We can decrease $\bar\delta$ such that it lies in the range $ (0,\delta_0)$.
		
		By possibly further decreasing $\delta,\bar\delta$, we can guarantee (\ref{thm_item_convex}) by  the mean convex neighborhood theorem of Choi-Haslhofer-Hershkovits-White \cite[Theorem 1.17]{ChoiHaslhoferHershkovitsWhite22_AncientMCF}. In fact, further decreasing $\delta,\bar\delta$,  we can by the definition of neck singularity assume that 
		$M(T-\bar\delta)\cap B_{2\delta}(x)$
		\begin{itemize}
			\item is, up to scaling and translation, $\frac 1 {R}$-close  in $C^\infty$ to  the   cylinder ($\cong S^{n-1}\x\R$) in $B_{2R}(0)$ with axis $\ell$ and radius $1$,
			\item and as a topological cylinder has $K_\ins(T-\bar \delta)\cap B_{2\delta}(x)$ on its inside.
		\end{itemize}In particular, (\ref{thm_item_is_colid_cylinder}) is fulfilled.
		
		\part*{To define $J$ and obtain (\ref{thm_item_transverse}).} Note that using \cite[Corollary 0.6]{ColdingMinicozzi16_SingularSet}, for some set $I_1\subset [T-\bar\delta ,T+\bar\delta ]$ of full measure,  $M(t)$ is smooth for all $t\in I_1$. Then (\ref{thm_item_transverse}) just follows from a standard transversality argument. Namely, for each $t\in I_1$, via the transversality theorem, $B_r(x)$ intersects $M(t)$ transversely for a.e. $r\in (\delta/2,\delta)$. Hence, for some countable dense subset  $J\subset I_1$ and some set $I_2\subset (\delta/2,\delta)$ of full measure, for all  $(t,r)\in J\x I_2$, $B_r(x)$ intersects $M(t)$ transversely. Hence, by slightly decreasing $\delta$, (\ref{thm_item_transverse}) can be fulfilled.

		\part*{To obtain   (\ref{thm_item_convex_loop}).}
		Let us first state a lemma, which gives us the constant $R_0$ we need.
		
		\begin{lem}\label{lem_intersection_sphere}
			There exist constants $R_0>2$, and $\epsilon_0,\epsilon_1>0$, all depending only on $n$, with the following significance. 
			\begin{itemize}
				\item Consider some ball $B_{2R_0}(x)$, and fix a diameter line $\ell$. Let $\cC\subset B_{2R_0}(x)$ be the solid cylinder with radius $2$ and axis $\ell$.
				\item Let  $x'$ be a regular point of some time-slice  $M(t)$ of a level set flow in $\R^{n+1}$, and $x'$ has an $\epsilon_0$-canonical neighborhood  with axis $\ell$.      
				\item Assume $x'\in B
				_{R_0}(x)$, $M(t)\cap B_{2R_0}(x)\subset \cC$.   
				\item  Let $S$ be a smooth  $n$-disc  properly embedded in $\cC$, with $\partial S$  lying on and transversely intersecting the cylindrical part of $\partial \cC$, and $x' \in S$, such that:
				\item $S$ is $\epsilon_1$-close in $C^\infty$ to some planar $n$-disc perpendicular to $\ell$. (See Figure \ref{fig::lem_intersect}.) 
			\end{itemize}
			
			Then we have:
			\begin{itemize}
				\item  If $M(t)$ intersects $S$ transversely at $x'$, then the connected component $D$ of $K_\ins(t)\cap S$ that contains $x'$ is a convex $n$-disc in  $S$, and $M(t)\cap D=\partial D$ with the  intersection being transverse.
				\item  If $M(t)$ does not intersect $S$ transversely at $x'$, then $D$ is just the point $x'$.
			\end{itemize}
		\end{lem}
		
		\begin{figure}
			\centering
			\makebox[\textwidth][c]{\includegraphics[width=3.5in]{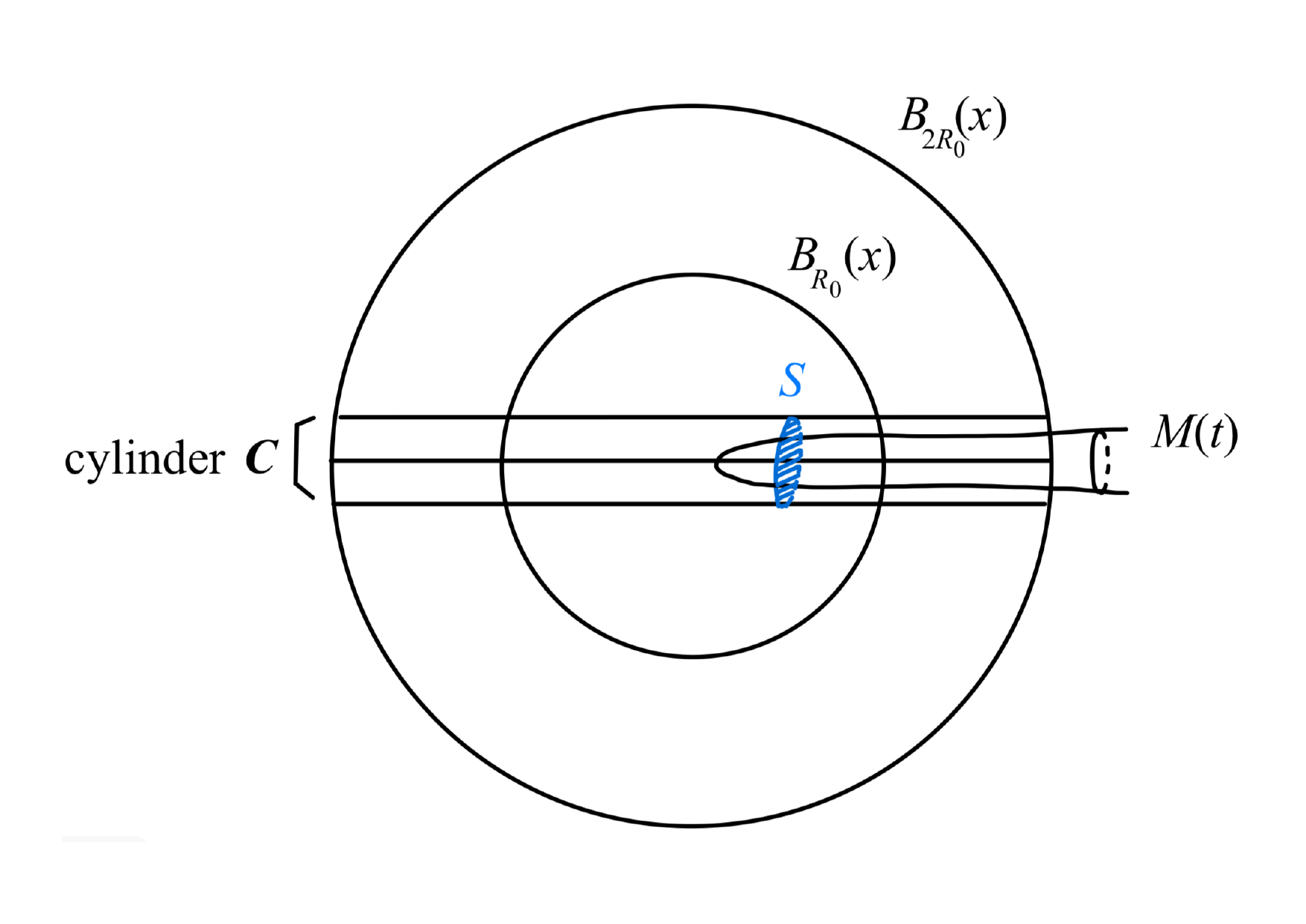}}
			\caption{}
			\label{fig::lem_intersect}
		\end{figure}
		
		\begin{proof}
			By an inspection of the geometry of the sphere, cylinder, bowl, and ancient oval, for all sufficiently large $R_0$ and small $\epsilon_0$, if   $M(t)\cap B_{2R_0}(x)\subset \cC$ then  $$M(t)\cap B_{2R_0}(x)\cap(\epsilon_0\textrm{-canonical neighborhood of }x')$$ has curvature $|A|>1/2$. Thus, if the smooth $n$-disc $S$ is sufficiently planar, the desired claim follows easily. 
		\end{proof}

		Now, we begin proving (\ref{thm_item_convex_loop}). Let us assume the $R,\epsilon$ we chose satisfy $R>R_0$ and $\epsilon<\epsilon_0$, with $R_0,\epsilon_0$ from the above lemma. By how we chose $R$ in the proof of (\ref{thm_item_is_colid_cylinder}) above, we can rescale $M(T-\bar\delta)$ by some factor $\lambda$  such that
		$$\lambda( M(T-\bar\delta)-x)\cap B_{2R}(0)$$ lies in the solid  cylinder $C\subset B_{2R}(0)$ with axis $\ell$ and radius $2$. Thus, by the mean convex neighborhood property (\ref{thm_item_convex}), for all $t\in(T-\bar\delta,T+\bar\delta)$,
		$$\lambda( M(t)-x)\cap B_{2R}(0)\subset C.$$ 
		
		Now, remember that we should focus on those $t\in J\subset (T-\bar\delta,T+\bar\delta)$.
		By  Theorem \ref{thm_canonical_nbd} and $\epsilon<\epsilon_0$, $M(t)$ has an $\epsilon_0$-canonical neighborhood with $\ell$, and so does $\lambda(M(t)-x)$ since the property is independent of scaling and translation.
		Let $S$ be a connected component of $\partial B_{R}(0)\cap C$.
		By increasing $R$, we can make  $S$ arbitrarily close to being planar. Hence, we can apply Lemma \ref{lem_intersection_sphere}. Then (\ref{thm_item_convex_loop}) follows immediately.

		\part*{To obtain (\ref{thm_item_at_most_two_convex_disks}).}
		We will just do the case for $D_1$. 
		Let
		$$T_1:=\sup\{t\in J:M(t)\cap  D_1 \textrm{ has only one connected component}\}.$$
		Note that $T_1>T-\bar\delta$ by (\ref{thm_item_is_colid_cylinder}) and $T-\bar\delta\in J$. To prove that $M(t)\cap  D_1$ has at most one connected component for each $t\in J$,  it suffices to prove that $T_1=T+\bar\delta$. Suppose the otherwise, i.e. $T_1<T+\bar\delta$ so that there exists a sequence in $J$, $t_1,t_2,...\downarrow T_1$, such that $M(t_i)\cap D_1$ contains at least two components.
		
		Now, let $$K_1=  \bigcap_{T-\bar\delta<t<T_1}K_\ins(t)\cap D_1,\;\;K_2=  K_\ins (T_1)\cap D_1,\;\; K_3=\bigcup_{i}K_\ins(t_i)\cap D_1.$$
		Note that $K_1\supset K_2\supset K_3$ by the mean convex neighborhood property (\ref{thm_item_convex}).
		\begin{prop}\label{prop_dense_in}
			$K_1$ is a convex $n$-ball in $\partial B$,   $K_1=K_2$,  and $K_3$ is dense in $K_1$.
		\end{prop}
		\begin{proof}
			By the mean convex property, $$K_1=\bigcap_{t\in J, t<T_1} K_\ins(t)\cap D_1.$$
			Then by 
			(\ref{thm_item_convex_loop}), $K_1$ is a convex $n$-ball.
			
			To prove $K_1=K_2$, it suffices to prove $K_1\subset K_2$. Note that by Lemma \ref{lem_relate_Kin_func},  for every $x\in K_1$ and $t\in (T-\bar\delta,T_1)$  we have $u(x,t)\leq 0$, where $u$ is a level set function for $\cM$. Since $u$ is continuous, $u(x,T_1)\leq 0$, implying $x\in K_2$ by Lemma \ref{lem_relate_Kin_func}.
			
			Finally, to prove $K_3$ is dense in $K_1$, it suffices to prove $K_1\backslash K_3$ has empty interior (as a subset of $\partial B$) since $K_1$ is a convex $n$-ball.
			We claim that
			$K_2\backslash K_3\subset M_\ins(T_1)$. Indeed, if $x\in K_2\backslash K_3$, then for every spacetime neighborhood $U$ of $(x,T_1)$ in $\R^{n+1}\x \R$, for each $i$, $U$ contains the point
			$$(x,t_i)\in (\R^{n+1}\x\R)\backslash \cK_\ins.$$
			Thus, $(x,T_1)\in\partial \cK_\ins$, and so $x\in M_\ins(T_1)$.
			
			As a result, 
			$$K_1\backslash K_3=K_2\backslash K_3\subset M_\ins(T_1)\cap D_1=M(T_1)\cap D_1,$$
			where the last equality is by the non-fattening of $\cM$ \cite[Theorem 1.19]{ChoiHaslhoferHershkovitsWhite22_AncientMCF}. We will prove that  $M(T_1)\cap D_1$ consists entirely of singularities (of $\cM$), and then immediately we would know $M(T_1)\cap D_1$ has empty interior using \cite[Theorem 0.1]{ColdingMinicozzi16_SingularSet}, which says that the singular set of $\cM$ is contained in finitely many compact embedded Lipschitz submanifolds each of
			dimension at most $n-1$ together with a set of dimension $n-2$.
			
			Suppose by contradiction that $M(T_1)\cap D_1$ contains some regular point $p$. So around some neighborhood of $p$ in $\R^{n+1}$, $M(T_1)$ is a smooth surface, with $K_\ins(T_1)$ on one side.
			Thus, we have $p\in\partial K_2$, with $K_2$ a convex $n$-ball. 
			Then we repeat the argument in the above proof of (\ref{thm_item_convex_loop}) to apply Lemma \ref{lem_intersection_sphere} around the point $p$, and conclude:
			\begin{itemize}
				\item $\partial K_2$ is a smooth $(n-1)$-sphere and consists entirely of regular points.
				\item The interior of $K_2$ does not intersects $M(T_1)$.
				\item  $M(T_1)$ intersects $D_1$ transversely along $\partial K_2$.
			\end{itemize} 
			So, for some short amount of time after $T_1$, $M(T_1)\cap D_1$ would still have only one connected component by pseudolocality of (locally) smooth MCF (see \cite[Theorem 1.5]{IlmanenNevesSchulze19_network}). This contradicts the definition of $T_1$.
		\end{proof}
		
		Let us continue the proof of (\ref{thm_item_at_most_two_convex_disks}). Now, for each $i$, $K_\ins(t_i)\cap D_1$  has  finitely many  connected components by transversality (\ref{thm_item_transverse}). Let $E_i$ be the one with the maximal diameter (measured inside $\partial B$), denoted $d_i$. Then by the canonical neighborhood property Theorem \ref{thm_canonical_nbd}, assuming $\epsilon$ small, for some geodesic ball $\tilde E_i\subset \partial B$ of diameter $3d_i$, $\tilde E_i\cap K_\ins(t_i)=E_i$.
		
		Now, note $d_i$ is increasing in $i$ by the mean convex neighborhood property (\ref{thm_item_convex}). Let $d=\lim_i d_i$. There are two cases: (a) $d\geq \diam(K_1)/2$, and (b) $d<\diam(K_1)/2$. For case (a), by the definition of  $t_i$, we know for sufficiently large $i$, the neighborhood $\tilde E_i$ would then need to contain a connected component of $K_\ins(t_i)\cap D_1$ other than $E_i$, contradicting the definition of $\tilde E_i$. So case (a) is impossible. Case (b) is also impossible since it, together with the existence of $\tilde E_i$, violates  Proposition \ref{prop_dense_in} which says $K_3$ is dense in $K_1$.
		This finishes the proof of (\ref{thm_item_at_most_two_convex_disks}).
		
		\part*{To obtain (\ref{thm_item_each_cnn_comp_genus0}).}
		Choose a connected component $K$ of $K_\ins(t)\cap B_{\delta}(x)$. Let us foliate  $B_{2\delta}(x)$ with planar $n$-discs that are perpendicular to the axis $\ell$. Then as in the proof of (\ref{thm_item_convex_loop}), we apply Lemma \ref{lem_intersection_sphere} to characterize the intersection of $K$ with every such planar $n$-discs. Namely, every such set of intersections consists of convex $n$-discs and isolated points.
		Viewing these sets of intersection as level sets of some function defined on $K$, Morse theory then immediately implies (\ref{thm_item_each_cnn_comp_genus0}).

		This finishes the proof of Theorem \ref{thm_topological_canonical}.
	\end{proof}
	
	Finally, we discuss some convergence theorems of MCF through cylindrical and spherical singularities. 
	
	\begin{prop}\label{lem_ae_time_smooth_conv}
		Let $\cM^i=\{M^i(t)\}_{t\geq 0}$, with $i=1,2,...$, and  $\cM=\{M(t)\}_{t\geq 0}$ be MCF through neck and spherical singularities in $\R^{n+1}$. Assume that each $M^i(0)$ and $M(0)$ are smooth, closed hypersurfaces, with $M^i(0)\to M(0)$ in $C^\infty$. Then 
		\begin{enumerate}
			\item For a.e. $t$, $M^{i}(t)\to M(t)$ in $C^\infty$.
			\item The spacetime tracks $\cM^i\to\cM$ in the Hausdorff sense.
		\end{enumerate}
	\end{prop}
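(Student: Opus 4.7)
The plan is to prove (2) first using stability of viscosity solutions, and then leverage (2) together with Brakke flow compactness and Brakke/White's local regularity theorem to deduce the smooth a.e.-time convergence in (1).

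For the Hausdorff convergence of spacetime tracks, I would choose level set functions $u^i,u$ for $\cM^i,\cM$ whose time-zero slices are (suitable truncations of) signed distance functions to $M^i(0)$, $M(0)$; the $C^\infty$ convergence $M^i(0)\to M(0)$ yields $u^i(\cdot,0)\to u(\cdot,0)$ locally uniformly. By the standard stability theorem for viscosity solutions of \eqref{eq:LSF} (Chen--Giga--Goto, Evans--Spruck), $u^i\to u$ locally uniformly on $\R^{n+1}\x[0,\infty)$. Because $\cM^i$ and $\cM$ are MCF through cylindrical and spherical singularities, \cite[Theorem 1.19]{ChoiHaslhoferHershkovitsWhite22_AncientMCF} guarantees non-fattening, so the zero set $\cM=\{u=0\}$ has empty interior in spacetime. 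Combined with local uniform convergence and the standard avoidance-principle bound on the extinction time (comparison with large shrinking spheres gives a uniform-in-$i$ compact set that contains every $\cM^i$), this upgrades to global Hausdorff convergence $\cM^i\to\cM$.

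For the a.e.-time smooth convergence, I would pass to the Brakke-flow level. Using Ilmanen's elliptic regularization, attach to each $\cM^i$ a unit-regular cyclic integral Brakke flow $\cN^i$ starting from $M^i(0)$; by Ilmanen's compactness theorem, a subsequence converges to a unit-regular cyclic integral Brakke flow $\cN_\infty$. Statement (2) forces $\spt\cN_\infty(t)\subset M(t)$ for all $t$. At any $t$ such that $M(t)$ is smooth, local multiplicity-one can be read off from $M^i(0)\to M(0)$ smoothly and the mean convex neighborhood theorem (which rules out sheet-doubling near a regular time slice), so $\cN_\infty$ is the canonical multiplicity-one Brakke flow associated to $\cM$. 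Uniqueness of this limit gives convergence of the full sequence. Now by \cite{White97_Stratif, White03} and \cite{ColdingMinicozzi16_SingularSet}, the singular set of $\cM$ has Hausdorff dimension at most $n-1$, so for a.e.\ $t$ the time slice $M(t)$ is a smooth closed hypersurface consisting entirely of regular points of $\cM$ with Gaussian density one. At such a $t$, Brakke's regularity theorem \cite{White05_MCFReg} applies in a spacetime neighborhood of every $x\in M(t)$: the flows $\cN^i$ are eventually smooth graphs converging in $C^\infty$ to $\cM$ on that neighborhood. Compactness of $M(t)$ then delivers $M^i(t)\to M(t)$ in $C^\infty$.

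The main obstacle is the last step: ensuring that the subsequential Brakke limit is \emph{the} multiplicity-one flow $\cM$, not a higher-multiplicity varifold supported on $\cM$, because Brakke's regularity gives smoothness only when the Gaussian density is close to one. This is the place where the hypothesis that both $\cM^i$ and $\cM$ are flows through cylindrical and spherical singularities (hence non-fattening and controlled via the mean convex neighborhood theorem of \cite{ChoiHaslhoferHershkovits18_MeanConvNeighb, ChoiHaslhoferHershkovitsWhite22_AncientMCF}) is genuinely used; without it, sheet-doubling phenomena could arise in the limit and one would only get convergence in the Hausdorff sense.
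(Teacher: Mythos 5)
Your overall strategy for part (1) is essentially the paper's: Ilmanen elliptic regularization to attach unit-regular cyclic Brakke flows, Brakke compactness for a subsequential limit, identification of that limit with the flow associated to $\cM$, then Brakke/White local regularity at a.e.\ time slice. The place where you diverge, and where your argument has a genuine soft spot, is the identification step. You write that ``local multiplicity-one can be read off from $M^i(0)\to M(0)$ smoothly and the mean convex neighborhood theorem (which rules out sheet-doubling near a regular time slice).'' That is not how the mean convex neighborhood theorem operates: it describes the geometry of a single level set flow near \emph{its own} cylindrical singularities, and it says nothing directly about whether a subsequential Brakke limit of a \emph{sequence} of such flows acquires multiplicity after the singular time. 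The clean route, and the one the paper takes, is to observe that $\cN_\infty$ is a unit-regular cyclic integral Brakke flow with $\cN_\infty(0)=M(0)\lfloor\cH^n$, and then invoke the \emph{uniqueness} of such flows with given smooth closed initial data (which follows from the mean convex neighborhood theorem plus the nonfattening result of Hershkovits--White). Uniqueness immediately gives $\cN_\infty=\mu$ a.e.\ in $t$, with multiplicity one for free, and no appeal to (2) is needed. Your argument, by contrast, would still need to rule out $\cN_\infty(t)=k\,\mu_t$ for $k\ge 2$ on some interval of positive length, and the ingredients you name do not obviously do that.

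For part (2) you take a genuinely different route: stability of viscosity solutions of the level set PDE (Chen--Giga--Goto / Evans--Spruck) to get locally uniform convergence $u^i\to u$, then nonfattening to pass from convergence of level set functions to Hausdorff convergence of zero sets, plus an avoidance-principle bound on extinction times. The paper instead uses compactness of weak set flows to obtain a Hausdorff limit $\cM^\infty$, notes that the (unique) limit Brakke flow is supported on $\cM^\infty$ so $\cM\subset\cM^\infty$, and uses maximality of the biggest flow for the reverse inclusion $\cM^\infty\subset\cM$. Both approaches are valid; yours is more PDE-flavored and avoids the detour through Brakke flows for this half of the statement, while the paper's is shorter given that uniqueness of the Brakke flow has already been set up for part (1). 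One structural consequence: you prove (2) first and feed it into (1), whereas the paper proves (1) first and its proof of (2) reuses the Brakke flow ingredients but not (1) itself. If you repair the identification step in (1) by citing uniqueness of unit-regular cyclic Brakke flows directly, your proof becomes correct.
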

	
	\begin{proof}
		By Ilmanen's elliptic regularization (see \cite{Ilmanen94_EllipReg, White09_CurrentsVarifolds}), for any closed smooth hypersurface $M^i(0)$, there exists a unit regular cyclic Brakke flow $\{\mu_t^i\}_{t\geq 0}$ such that $\mu_0^i=M^i(0)\lfloor\cH^n$, where $\cH^n$ is the $n$-dimensional Hausdorff measure. By the mean convex neighborhood theorem \cite{ChoiHaslhoferHershkovits18_MeanConvNeighb} and the nonfattening of level set flow with singularities that have mean convex neighborhood \cite{HershkovitsWhite20_Nonfattening}, $\{\mu^i_t\}_{t\geq 0}$ is supported on $\cM^i$. Then the compactness of Brakke flows (\cite{Ilmanen94_EllipReg, White09_CurrentsVarifolds}) implies that $\{\mu^i_t\}_{t\geq 0}$ subsequentially converges to a limit unit regular cyclic Brakke flow $\{\mu^\infty_t\}_{t\geq 0}$.
		
		Because $M^i(0)\to M(0)$ smoothly, $\mu^\infty_0=\mu_0$, and by the uniqueness of unit regular cyclic Brakke flow, $\mu^\infty_t=\mu_t$ a.e. for all $t\geq 0$. In particular, the regular part of $\mu^\infty_t$ equals the regular part of $\mu_t$. Then by Brakke's regularity theorem and a.e. time regularity of $\cM^i$ with neck and spherical singularities, we have for a.e. $t$, $M^i(t)\to M(t)$.
		
		The compactness of weak set flow shows that $\cM^i$ subsequentially converges to a limit weak set flow $\cM^\infty$ in Hausdorff distance. Because $\{\mu_t\}_{t\geq 0}$ is supported on $\cM^\infty$, we have $\cM\subset\cM^\infty$. Meanwhile, $\cM$ is the biggest flow, therefore $\cM^\infty\subset\cM$. Thus, $\cM^\infty=\cM$. This also shows the uniqueness of the limit. Therefore, $\cM^i$ converges to $\cM$ in Hausdorff distance.
		
	\end{proof}
	
	\section{Homology descent, homology termination, and homology breakage}\label{sect_general_results}
	
	In this section, we consider general level set flows $\cM=\{M(t)\}_{t\geq 0}$ in $\R^{n+1}$, where $M(0)$ is not necessarily a closed hypersurface. We will introduce three new concepts. For a heuristic explanation of them, see \S \ref{sect_topo_mcf}.

	Let $H_k(\cdot)$ denotes the $k$-th homology group in $\Z$-coefficients.
	
	\begin{defn}[Homology descent]\label{defn_order}
		We define a  relation $\succ$ on the {\it disjoint} union $$\bigsqcup_{t\geq 0}H_{n-1}(W(t) )$$ as follows. Given two times $ T_0\leq T_1$, and two  homology  classes $c_0\in H_{n-1}(W(T_0) )$ and $c_1\in H_{n-1}(W(T_1) )$, we say that {\it $c_1$ descends from $c_0$}, and denote $$c_0\succ c_1,$$ if every representative $\gamma_0\in c_0$ and $\gamma_1\in c_1$ together bound some $n$-chain $\Gamma\subset W[T_0,T_1]$, i.e. $\gamma_0-\gamma_1=\partial\Gamma.$ (See Figure \ref{fig::descent}.)
	\end{defn}
	Clearly, in the above definition, we can interchangeably replace ``every representative" with ``some representative". Note that we are using singular homology, which means that  $\gamma_0, \gamma_1,$ and $\Gamma$ are just singular chains.
	
	\begin{rmk}\label{rmk}
		The relation $\succ$ is a partial order. Indeed,
		let $c_i\in H_{n-1}(W(T_i) )$ for $i=0,1,2$.
		Clearly $c_0\succ c_0$.
		If $c_0\succ c_1$ and $c_1\succ c_0$, then $T_0=T_1$, implying $c_0=c_1$.
		Moreover, if $c_0\succ c_1$ and $c_1\succ c_2$, then $T_0\leq T_2$ and it readily follows from definition that $c_0\succ c_2$.
	\end{rmk}
	
	This relation has certain favorable properties.
	
	\begin{prop}\label{prop_homology_unique}
		Let $c_0\in H_{n-1}(W(T_0) )$ and $T_0\leq T_1$. Then there exists at most one  $c_1\in H_{n-1}(W(T_1) )$ such that $c_0\succ c_1$.
	\end{prop}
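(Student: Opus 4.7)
I would begin by reformulating the descent relation as an equality of images in $H_{n-1}(W[T_0,T_1])$. Let $\iota_i\colon W(T_i)\hookrightarrow W[T_0,T_1]$ denote the time-slice inclusions for $i=0,1$. The equation $\gamma_0-\gamma_1=\partial\Gamma$ in $W[T_0,T_1]$ is precisely the statement that $(\iota_0)_*([\gamma_0])=(\iota_1)_*([\gamma_1])$, so $c_0\succ c_1$ is equivalent to $(\iota_0)_*(c_0)=(\iota_1)_*(c_1)$. Hence, if both $c_0\succ c_1$ and $c_0\succ c_1'$ hold, then $(\iota_1)_*(c_1)=(\iota_0)_*(c_0)=(\iota_1)_*(c_1')$, and the proposition reduces to showing that $(\iota_1)_*$ is injective.

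\textbf{Main step.} To obtain injectivity, I would construct a strong deformation retraction of $W[T_0,T_1]$ onto $W(T_1)\times\{T_1\}$. The key input is the monotonicity of the level set flow: $K_\ins(t)$ is non-increasing in $t$, while $K_\out(t)$ is non-decreasing, so $W_\out(t)=\R^{n+1}\setminus K_\ins(t)$ grows and $W_\ins(t)=\R^{n+1}\setminus K_\out(t)$ shrinks. The plan is to push each $(x,t)\in W[T_0,T_1]$ forward in time along the straight-line path to $(x,T_1)$. On the open subset $U=\bigcup_t W_\out(t)\times\{t\}$ this is immediate: the monotonicity of $W_\out$ ensures the path stays in $U\subset W[T_0,T_1]$ and terminates at $(x,T_1)\in W_\out(T_1)\subset W(T_1)$.

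\textbf{Main obstacle.} The hard part is the complementary open set $V=\bigcup_t W_\ins(t)\times\{t\}$. A point $(x,t)\in V$ may exit $W_\ins$ at some intermediate time $t^*\in(t,T_1]$, at which moment $x$ typically lies on the evolving surface $M(t^*)$; the straight-line path in time then briefly leaves $W[T_0,T_1]$ by crossing the spacetime track $\cM$. I would resolve this by perturbing the path spatially near $t^*$: since $M(t^*)$ has codimension one in $\R^{n+1}$, a small displacement of $x$ in the outward normal direction carries the path across $M(t^*)$ into $W_\out(t^*+\varepsilon)\subset W(t^*+\varepsilon)$, and this perturbation can be selected continuously in $(x,t)$ using a vector field defined in a tubular neighborhood of $\cM$ that points from $W_\ins$ into $W_\out$. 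Once the retraction is constructed, $(\iota_1)_*$ is a homotopy equivalence, hence an isomorphism on $H_{n-1}$, which yields the required injectivity.
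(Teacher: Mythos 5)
Your reformulation of homology descent as $(\iota_0)_*(c_0)=(\iota_1)_*(c_1)$ and the reduction of uniqueness to the injectivity of $(\iota_1)_*\colon H_{n-1}(W(T_1))\to H_{n-1}(W[T_0,T_1])$ is correct, and it is exactly the structure of the paper's argument (taking the difference of the two bounding chains and using injectivity to conclude $c_1=c_2$). Where you diverge is in how that injectivity is obtained: the paper invokes White's Theorem 1(iii) from \cite{White95_WSF_Top}, a substantial result about the topology of level set flows, whereas you propose to prove the (stronger) statement that $W[T_0,T_1]$ deformation retracts onto $W(T_1)\times\{T_1\}$.

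That proposed retraction has a genuine gap, and it sits precisely at the point where the proposition has content. Your construction pushes forward in time, which is safe on $W_\out$ by monotonicity, and on $W_\ins$ you plan to rescue paths that hit $\cM$ by a vector field in a ``tubular neighborhood of $\cM$'' pointing from $W_\ins$ into $W_\out$. But $\cM$ is the spacetime track of a level set flow, and it is generally \emph{not} a smooth hypersurface: at singular points (e.g. the tip of a shrinking sphere, a neck pinch) there is no tubular neighborhood and no well-defined normal direction, and in the fattening case $M(t)$ can even have nonempty interior, so ``codimension one'' fails outright. These singular times are exactly when the topology of $W(t)$ changes, i.e. exactly when the injectivity statement is non-vacuous; in the complementary smooth regime the slices $W(t)$ are all diffeomorphic via the flow and the retraction is trivial. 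Concretely, consider a shrinking sphere collapsing to a point $p$ at time $T^*\in(T_0,T_1)$: for interior points $(x,t)$ with $x\to p$, the first exit time tends to $T^*$ while the outward ``normal'' direction at the exit point runs over all of $S^{n-1}$, so the prescribed perturbation cannot be chosen continuously in $(x,t)$ near the spine $\{p\}\times[T_0,T^*]$. Your sketch also does not reconcile the ``push straight up'' rule used on $U=\bigcup_t W_\out(t)\times\{t\}$ with the perturbed rule on $V=\bigcup_t W_\ins(t)\times\{t\}$ across the overlap $U\cap V$, which is another source of discontinuity. In short, the reduction is right, but the injectivity of the inclusion on $H_{n-1}$ is the real theorem here; either cite \cite[Theorem 1(iii)]{White95_WSF_Top} as the paper does, or supply an argument that addresses singular points and fattening, which the naive deformation retraction does not.
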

	\begin{proof}
		Suppose $c_1,c_2\in H_{n-1}(W(T_1) )$ satisfy $c_0\succ c_1$ and $c_0\succ c_2$. Our aim is to show $c_1=c_2$. Choose $\gamma_i\in c_i$ for $i=0,1,2$. Then by definition, $\gamma_0-\gamma_1=\partial A$ for some $A\subset W[T_0,T_1]$, and similarly $\gamma_0-\gamma_2=\partial B$ for some $B\subset W[T_0,T_1]$. Thus, $\gamma_1$ and $\gamma_2$ bound $A-B\subset W[T_0,T_1]$.
		Since the map $$H_{n-1}(W(T_1))\to H_{n-1}(W[T_0,T_1])$$ induced by the inclusion
		$W(T_1)\to W[T_0,T_1]$ is injective by White \cite[Theorem 1 (iii)]{White95_WSF_Top}, we deduce that $\gamma_1$ and $\gamma_2$ are homologous within $W(T_1)$. Consequently, $c_1=c_2$.
	\end{proof}
	
	\begin{rmk}
		Note that in the above it is possible that there does not exist any $c_1\in H_{n-1}(W(T_1))$ for which $c_0\succ c_1$. As illustrated in Figure \ref{fig::a_break}, after time $T$, no homology class $c_1$ satisfies $a_0\succ c_1$.
	\end{rmk}

	\begin{figure}[h]
		\centering
		{\includegraphics[width=5in]{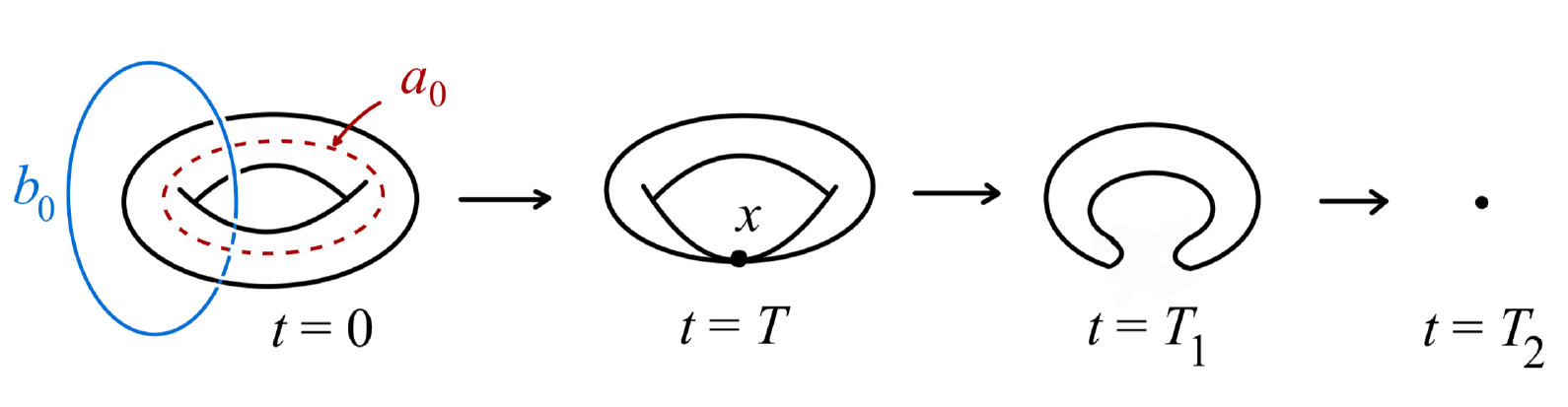}}
		\caption{}
		\label{fig::a_break}
	\end{figure}

	\begin{rmk} \label{exmp_more_into_one}
		On the other hand, there may be multiple homology classes $c_0\in H_1(W(T_0))$ satisfying the relation $c_0\succ c_1$. As an example, consider the flow shown in Figure \ref{fig::a_break}, where both $b_0\in H_1(W_\out(0))$ and the trivial element of $H_1(W_\out(0))$ descend to the trivial element of $H_1(W_\out(T_1))$.
	\end{rmk}

	In fact, precisely because of Proposition \ref{prop_homology_unique} and Remark \ref{exmp_more_into_one},  we chose the symbol $\succ$ (instead of $\prec$)  to pictographically reflect that more than one homology class may descend into one, but not the other way around.

	\begin{prop}\label{prop_at_least_one}
		We focus on the case $n=2$. Let $c_1\in H_1(W(T_1) )$ and $T_0\leq T_1$. Then there exists at least one   $c_0\in H_1(W(T_0) )$ such that $c_0\succ c_1$.
	\end{prop}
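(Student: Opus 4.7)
The plan is to appeal to a surjectivity statement in White's topological theorem for level set flows---complementing the injectivity already used in Proposition~\ref{prop_homology_unique}---and then unwind the definition of $\succ$ to exhibit the required 2-chain.

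The key input I would draw from White \cite{White95_WSF_Top} is that, under MCF, the topology of the complement only simplifies over time: concretely, the inclusion $W(T_0)\hookrightarrow W[T_0,T_1]$ induces a surjection on $H_1$. This is the natural counterpart to the injectivity of $H_1(W(T_1))\to H_1(W[T_0,T_1])$ that was used in the proof of Proposition~\ref{prop_homology_unique}. As a back-up, should the surjectivity not be directly available in \cite{White95_WSF_Top} in this form, I would split the spacetime track as
\[
W[T_0,T_1]=W_\ins[T_0,T_1]\sqcup W_\out[T_0,T_1]
\]
(a disjoint union, since $K_\ins(t)\cup K_\out(t)=\R^{n+1}$ by Lemma~\ref{lem_relate_Kin_func}, so that $W_\ins(t)\cap W_\out(t)=\emptyset$), and then handle each subtrack separately using White's analysis of the inner and outer pieces.

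Granted this surjectivity, the rest is a direct unwinding of Definition~\ref{defn_order}. Given $c_1\in H_1(W(T_1))$, let $c_0\in H_1(W(T_0))$ be any preimage---under the surjection above---of the image of $c_1$ in $H_1(W[T_0,T_1])$ induced by the inclusion $W(T_1)\hookrightarrow W[T_0,T_1]$. Choosing representatives $\gamma_0\in c_0$ and $\gamma_1\in c_1$ and viewing them as singular 1-cycles inside $W(T_0)\times\{T_0\}$ and $W(T_1)\times\{T_1\}$ respectively, the equality of their images in $H_1(W[T_0,T_1])$ is exactly the statement that $\gamma_0-\gamma_1=\partial\Gamma$ for some singular 2-chain $\Gamma\subset W[T_0,T_1]$, which is $c_0\succ c_1$. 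The main obstacle is to identify and correctly quote White's surjectivity result in the generality of general level set flows (the setting of \S\ref{sect_general_results}), especially since $M(0)$ is not assumed to be a closed hypersurface here; everything else is formal.
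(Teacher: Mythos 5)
Your proposal is correct and follows essentially the same route as the paper: the paper cites White \cite[Theorem 1 (ii)]{White95_WSF_Top}, which states that any loop in $W(T_1)$ can be homotoped through $W[T_0,T_1]$ to a loop in $W(T_0)$, and this homotopy statement is precisely the source of the ``surjectivity'' you invoke (it gives exactly what you need, namely that the image of $c_1$ in $H_1(W[T_0,T_1])$ lies in the image of $H_1(W(T_0))$). Your back-up decomposition into inner and outer pieces is unnecessary since White's result is directly available in the required form.
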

	\begin{proof}
		Choose some $\gamma\in c_1$. By White \cite[Theorem 1 (ii)]{White95_WSF_Top}, $\gamma$ can be homotoped through $W[T_0,T_1]$ to some loop $\gamma'$ in $W(T_0)$. So $c_0:=[\gamma']\succ c_1$.
	\end{proof}

	The following proposition says that a homology class cannot disappear and then reappear later.
	\begin{prop}\label{prop_unique_clss_inbetween}
		Let $T_0<T_1$, $c_0\in H_{n-1}(W(T_0))$, and  $c_1\in H_{n-1}(W(T_1))$ with $c_0\succ c_1$. Then for every $t\in [T_0,T_1]$ there exists a unique $c\in H_1(W(t))$ such that $c_0\succ c\succ c_1$.
	\end{prop}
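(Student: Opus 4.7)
The plan is to reduce everything to a diagram chase between $H_1$ of the time slices $W(s)$ and $H_1$ of the space-time tracks $W[a,b]$. First, uniqueness is immediate: if $c$ and $c'$ both sit between $c_0$ and $c_1$, then in particular $c_0\succ c$ and $c_0\succ c'$, so Proposition \ref{prop_homology_unique} (applied at time $t$) gives $c=c'$. The bulk of the work is existence.

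The conceptual step I would establish first is that, for every $a\leq b$ in $[T_0,T_1]$ and every $s\in [a,b]$, the inclusion $W(s)\hookrightarrow W[a,b]$ induces an \emph{isomorphism} on $H_1$. Injectivity is exactly the White result \cite[Theorem 1(iii)]{White95_WSF_Top} already used in Proposition \ref{prop_homology_unique}. Surjectivity is precisely the homotoping-to-a-time-slice statement \cite[Theorem 1(ii)]{White95_WSF_Top} that drove the proof of Proposition \ref{prop_at_least_one}: any loop in $W[a,b]$ can be homotoped through $W[a,b]$ to a loop in $W(s)$. Passing to $H_1$ gives surjectivity. As a consequence, in the commutative triangle induced by the inclusions $W(T_0)\hookrightarrow W[T_0,t]\hookrightarrow W[T_0,T_1]$, two of the three maps on $H_1$ are isomorphisms, forcing the third, $H_1(W[T_0,t])\to H_1(W[T_0,T_1])$, to be an isomorphism as well. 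The same holds for $W[t,T_1]\hookrightarrow W[T_0,T_1]$.

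With these isomorphisms in hand, translate the descent relation homologically: $c_0\succ c_1$ is equivalent to saying that the images of $c_0$ and $c_1$ under the inclusion-induced maps into $H_1(W[T_0,T_1])$ coincide, since $c_0\succ c_1$ just says that some representatives together bound an $n$-chain in $W[T_0,T_1]$. Using the isomorphism $H_1(W(t))\cong H_1(W[T_0,T_1])$ from the previous paragraph, define $c\in H_1(W(t))$ to be the unique class sent to this common image. To verify $c_0\succ c$, observe that $c_0$ and $c$ have equal images in $H_1(W[T_0,T_1])$, hence (by injectivity of $H_1(W[T_0,t])\to H_1(W[T_0,T_1])$) also equal images in $H_1(W[T_0,t])$; choosing representatives and a chain realizing this equality yields an $n$-chain in $W[T_0,t]$ with the required boundary. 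The verification of $c\succ c_1$ is symmetric, using the iso $H_1(W[t,T_1])\to H_1(W[T_0,T_1])$.

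The hard part is really the first step, namely the careful extraction from White's theorem that the time-slice-to-spacetime map is an isomorphism on $H_1$ for every sub-interval and every intermediate time, rather than merely injective; once that is in place the rest is formal. An alternative, more hands-on route is to use Proposition \ref{prop_at_least_one} to produce some $c\in H_1(W(t))$ with $c\succ c_1$, and then to manufacture $c_0\succ c$ by gluing the chain $\Gamma$ witnessing $c_0\succ c_1$ with the chain $\Gamma'$ witnessing $c\succ c_1$ to obtain a chain in $W[T_0,T_1]$ bounding $\gamma_0-\gamma$, and finally descending this null-homology from $W[T_0,T_1]$ to $W[T_0,t]$ via the injectivity above. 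Both routes hinge on the same injectivity statement, which is the only nontrivial input.
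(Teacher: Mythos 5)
Your argument hinges on the claim that for every sub-interval $[a,b]\subset[T_0,T_1]$ and every $s\in[a,b]$, the inclusion $W(s)\hookrightarrow W[a,b]$ induces an \emph{isomorphism} on $H_{n-1}$; both of your routes use this (in the second route, in the form ``$H_{n-1}(W[T_0,t])\to H_{n-1}(W[T_0,T_1])$ is injective''). Unfortunately this is false, and White's theorem does not say it. What \cite[Theorem 1]{White95_WSF_Top} gives is an asymmetric pair of statements: the map $H_{n-1}(W(b))\to H_{n-1}(W[a,b])$ from the \emph{later} slice is injective, and every loop in $W[a,b]$ can be homotoped to a loop in $W(a)$, the \emph{earlier} slice, which for $n=2$ yields surjectivity of $H_1(W(a))\to H_1(W[a,b])$. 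Surjectivity from the earlier slice does not come with injectivity. Concretely, take $M(T_0)$ a torus whose outward neck pinches at some $T_{\mathrm{pinch}}\in(T_0,T_1)$ (Figure \ref{fig::a_break}): the generator $b_0\in H_1(W_\out(T_0))\cong\Z$ satisfies $b_0\succ 0(T_1)$, so some $\gamma_0\in b_0$ bounds a $2$-chain in $W_\out[T_0,T_1]$, and hence $b_0$ lies in the kernel of $H_1(W_\out(T_0))\to H_1(W_\out[T_0,T_1])$. Thus the map from the earlier slice is not injective, and by the same token $H_1(W_\out[T_0,t])\to H_1(W_\out[T_0,T_1])$ is not injective for $t<T_{\mathrm{pinch}}$ (the image of $b_0$ in $H_1(W_\out[T_0,t])$ is nonzero by injectivity from the slice $W_\out(t)$, yet it maps to zero downstream). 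So the ``formal'' step you describe as the only nontrivial input is exactly where the argument breaks.

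Your alternative route fails at the same point: after gluing $\Gamma$ and $\Gamma'$ you obtain a null-homology of $\gamma_0-\gamma$ in $W[T_0,T_1]$, but descending it to $W[T_0,t]$ is precisely the false injectivity. Moreover, Proposition \ref{prop_at_least_one} produces \emph{some} $c$ with $c\succ c_1$, but there can be several (Remark \ref{exmp_more_into_one}), and you have no control over which one you get; for the wrong choice the descent $c_0\succ c$ simply does not hold.

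The paper avoids all of this by working directly with a chain rather than in homology: it takes the $n$-chain $\Gamma$ witnessing $c_0\succ c_1$, represents it by a polyhedral chain with faces transverse to the time slices, and sets $\beta_t$ to be the literal time-$t$ slice of that chain. Then $\Gamma\cap W[T_0,t]$ and $\Gamma\cap W[t,T_1]$ are genuine chains realizing $c_0\succ[\beta_t]$ and $[\beta_t]\succ c_1$ respectively, and no injectivity of spacetime inclusion maps is needed. This slicing idea is what your argument is missing; if you want to repair the gluing route, you would need to choose $c$ to be the slice class $[\beta_t]$ rather than an arbitrary output of Proposition \ref{prop_at_least_one}, at which point you have essentially rediscovered the paper's proof.
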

	\begin{proof}We only need to prove existence, as then uniqueness would follow from Proposition \ref{prop_homology_unique}.
		
		Under our assumption, we have $\gamma_0\in c_0$ and $\gamma_1\in c_1$ such that they together bound some $n$-chain $C$ in $W[T_0,T_1]$. 
		Since $W[T_0, T_1]$ is an open subset of Euclidean space, we can choose a representative of the $n$-chain $C$ as a polyhedron chain. By tilting the faces appropriately, we can ensure that they do not lie entirely within any specific slice $\mathbb{R}^{n+1} \times \{t\}$. This enables us to find $\beta_t = \{x : (x, t) \in C\}$ as an $(n-1)$-chain without a boundary for each $t \in [T_0, T_1]$. Consequently, we have $[\beta_t] \in H_{n-1}(W(t))$, and $c_0 \succ [\beta_t] \succ c_1$.
	\end{proof}
	
	Based on Proposition \ref{prop_unique_clss_inbetween}, the following definition is well-defined.
	\begin{defn}[Homology termination]\label{defn_termination}
		Let $c_0\in H_{n-1}(W(T_0))$. 
		\begin{itemize}
			\item If
			$$\mathfrak{t}(c_0):=\sup\{t\geq T_0:c_0\succ c \textrm{ for some }c\in H_{n-1}(W(t))\}$$
			is finite, then we say that $c_0$ {\it terminates at time} $\mathfrak{t}(c_0)$, otherwise we say $c_0$ {\it never terminates}.
			\item And for each $t\geq T_0$, the unique $c\in H_{n-1}(W(t))$ such that $c_0\succ c$, if exists, is denoted $c_0(t)$.
		\end{itemize}
		If needed, we use $\ft^\cM$ in place of $\ft$ to specify the flow.
	\end{defn}
	Note that since $W$ is open, if $c_0$ terminates at time $\mathfrak{t}(c_0)$ then there is no $c\in H_{n-1}(W(\mathfrak{t}(c_0)))$ such that $c_0\succ c$. So $c_0(\mathfrak{t}(c_0))$ is not well-defined, and every $c_0\in H_{n-1}(W(T_0))$ does not terminate at time $T_0$. Therefore, one can interpret the time interval $[T_0,\mathfrak{t}(c_0))$ as the ``maximal interval of existence'' for $c_0$.
	
	\begin{rmk}[Trivial homology classes]
		Let us also elaborate on trivial homology classes. At each time $t$, $H_{n-1}(W(t))$ has a {\it unique} trivial homology class $0_t$. This is true even for situations like Figure \ref{fig::a_break} when the surfaces have inside and outside regions: The trivial elements of $H_1(W_\ins(t))$ and $H_1(W_\out(t))$ are viewed as the same.
		
		However, $0_t$  is considered distinct for different $t$, because we used disjoint union in Definition \ref{defn_order}. Nonetheless, for any $t_1<t_2$, it is vacuously true that $0_{t_1}\succ 0_{t_2}$. Thus, we can denote each $0_t$ as $0(t)$, following the notation in Definition \ref{defn_termination}. In addition, clearly, {\it the trivial homology class never terminates}.
	\end{rmk}

	\begin{exmp}
		Let us revisit Figure \ref{fig::a_break}. It is clear that $a_0$ terminates at time $T$, whereas $b_0$ does not. In fact, $b_0$ will never terminate: $b_0(t)$ would just become trivial for each $t>T$.
	\end{exmp}
	
	\begin{exmp}
		Let us now instead consider the flow in Figure \ref{fig::b_break}. At time $T$, $b_0$ terminates while  $a_0$ does not. In fact, $a_0(t)$ becomes trivial after time $T$, and thus it will never terminate.
	\end{exmp}
	\begin{figure}[h]
		\centering
		{\includegraphics[width=5in]{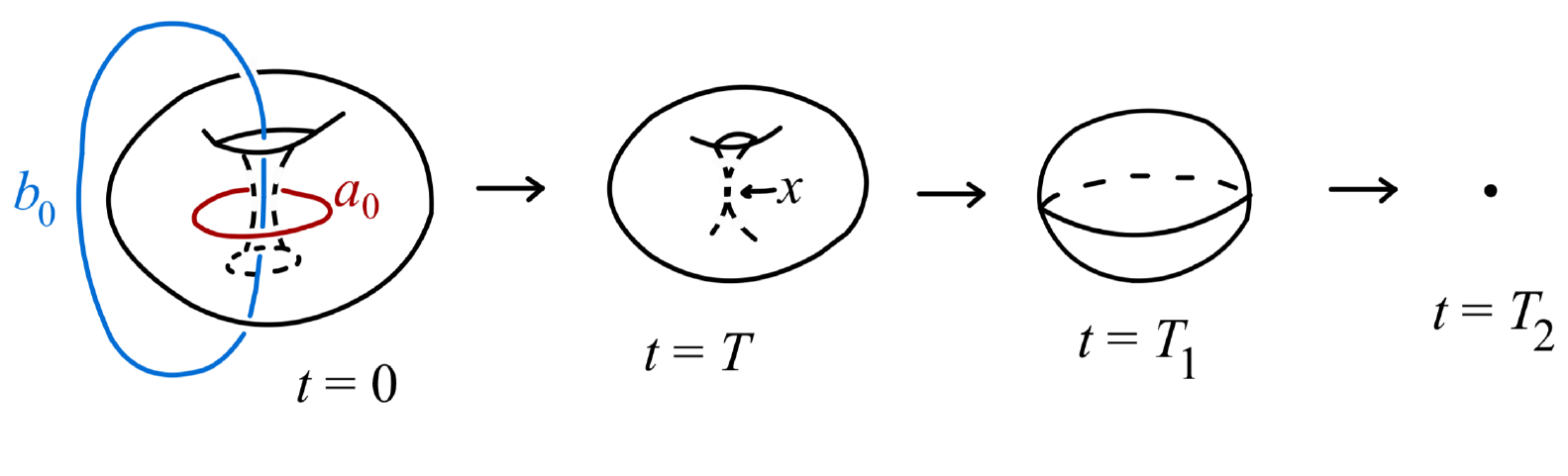}}
		\caption{}
		\label{fig::b_break}
	\end{figure}

	Now, we introduce another concept. In Figure \ref{fig::a_break}, $a_0$ terminates at time $T$ because, intuitively, it ``breaks" at the cylindrical singularity $x$. Similarly, in Figure \ref{fig::b_break}, $b_0$ terminates at time $T$ because it ``breaks" at the outward cylindrical singularity. The following definition provides a precise characterization of this breakage phenomenon.
	
	\begin{defn}[Homology breakage]\label{defn_breakage}
		Let $c_0\in H_{n-1}(W(T_0) )$,  $T_1> T_0$, and $K\subset M(T_1)$ be a compact set. Suppose  the following holds:
		\begin{itemize}
			\item For each $T_0\leq t<
			T_1$, there exists $c_0(t)\in H_{n-1}(W(t) )$ such that $c_0\succ c_0(t)$.
			\item For every neighborhood $U\subset\R^{n+1}$ of $K$, for each $t<T_1$ sufficiently  close  to $T_1$, every element of $c_0(t)$ intersects $U$. (Recall Figure \ref{fig::breakage}.)
		\end{itemize}
		Then we say that $c_0$ {\it breaks in}  $(K,T_1)$. We will often concern the case when $K$ is just a point $x\in M(T)$, for which we say that $c_0$ {\it breaks at} $(x,T_1)$.
	\end{defn}

	One might wonder why Definition \ref{defn_breakage} does not require $c_0$ to terminate at time $T_1$. This is because it is not necessary:
	\begin{prop}\label{prop_homology_class_die}
		If a homology class $c_0\in H_{n-1}(W(T_0) )$ breaks  in  some  $(K,T_1)$, then $c_0$ terminates at time $T_1$.
	\end{prop}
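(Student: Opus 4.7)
The plan is to argue by contradiction. The first bullet of the breakage hypothesis guarantees that $c_0(t)$ exists for every $t \in [T_0, T_1)$, so $\ft(c_0) \geq T_1$; what needs to be ruled out is $\ft(c_0) > T_1$. Under that assumption, Proposition \ref{prop_unique_clss_inbetween} produces a class $c_0(T_1) \in H_{n-1}(W(T_1))$ with $c_0 \succ c_0(T_1)$. The goal becomes to extract, for every $t < T_1$ sufficiently close to $T_1$, a representative of $c_0(t)$ that avoids a fixed open neighborhood of $K$, directly contradicting the second bullet of breakage.

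I would begin by fixing a polyhedron cycle representative $\gamma \in c_0(T_1)$. Since $\gamma$ is compact and contained in the open set $W(T_1) = \R^{n+1} \setminus M(T_1)$, while $K \subset M(T_1)$, there is an open neighborhood $U_0 \supset K$ disjoint from $\gamma$. Because the spacetime track $\cM$ is closed in $\R^{n+1} \times [0, \infty)$ and disjoint from the compact set $\gamma \times \{T_1\}$, there exists $\epsilon_0 > 0$ with $\gamma \times [T_1 - \epsilon_0, T_1] \subset W[T_1 - \epsilon_0, T_1]$. In particular, $\gamma \subset W(t)$ for every $t \in [T_1 - \epsilon_0, T_1]$, so $[\gamma]_t \in H_{n-1}(W(t))$ is represented by a cycle disjoint from $U_0$. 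The whole argument thus reduces to identifying $[\gamma]_t = c_0(t)$ for such $t$.

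To obtain this identification, which I expect to be the main obstacle, I take a polyhedron chain $\Gamma \subset W[T_0, T_1]$ realizing $c_0 \succ c_0(T_1)$, so $\partial \Gamma = \gamma_0 - \gamma$ for some $\gamma_0 \in c_0$, and slice at time $t$ as in the proof of Proposition \ref{prop_unique_clss_inbetween}: after slightly tilting the faces, the lower portion $\Gamma \cap (\R^{n+1} \times [T_0, t])$ has boundary $\gamma_0 - \beta_t$ in $W[T_0, t]$, forcing $[\beta_t] = c_0(t)$ by the uniqueness Proposition \ref{prop_homology_unique}, while the upper portion $\Gamma \cap (\R^{n+1} \times [t, T_1])$ has boundary $\beta_t - \gamma$ in $W[t, T_1]$. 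Adding to the upper portion the cylinder $\gamma \times [t, T_1] \subset W[t, T_1]$, which has boundary $\gamma - \gamma_t$, yields a chain in $W[t, T_1]$ with boundary $\beta_t - \gamma_t$. White's injectivity statement $H_{n-1}(W(t)) \hookrightarrow H_{n-1}(W[t, T_1])$ (used already in Proposition \ref{prop_homology_unique}) then delivers $[\beta_t] = [\gamma]_t$, hence $c_0(t) = [\gamma]_t$. The subtlety, and what makes this the key step, is that as Remark \ref{exmp_more_into_one} warns, the relation $\succ$ has no downward uniqueness, so pinning $c_0(t)$ to the specific class $[\gamma]_t$ really requires constructing this explicit spacetime chain linking $\beta_t$ and $\gamma_t$. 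With the identification in hand, $\gamma$ is a representative of $c_0(t)$ avoiding $U_0$ for every $t \in [T_1 - \epsilon_0, T_1)$, contradicting the second bullet of the breakage hypothesis and completing the proof.
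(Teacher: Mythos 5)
Your proof is correct, but it takes a slightly different (and somewhat longer) route than the paper's. Both arguments share the same skeleton: assume $\ft(c_0) > T_1$, produce a polyhedron chain $\Gamma$ (the paper calls it $C$) realizing $c_0 \succ c_0(T')$ for some $T' > T_1$, and derive a contradiction by showing a representative of $c_0(t)$ avoids a fixed neighborhood of $K$ for $t$ close to $T_1$.

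The difference lies in which compact object you exploit. The paper works directly with the chain $C$: since $C\subset W[T_0,T']$ while $K\subset M(T_1)$, the compact sets $C$ and $K\times\{T_1\}$ are disjoint, so a spacetime box $B_r(K)\times[T_1-\delta,T_1+\delta]$ misses $C$, and hence every slice $\beta_t := \{x : (x,t)\in C\}$ — which is automatically a representative of $c_0(t)$ by the slicing in Proposition \ref{prop_unique_clss_inbetween} — avoids $B_r(K)$. Done. You instead fix a single representative $\gamma\in c_0(T_1)$, note it is bounded away from both $K$ and $\cM$, and then must do extra work (the cylinder $\gamma\times[t,T_1]$ plus White's injectivity of $H_{n-1}(W(t))\hookrightarrow H_{n-1}(W[t,T_1])$) to certify that $\gamma$, placed at time $t$, still represents $c_0(t)$. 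You correctly identified this identification as the delicate point — Remark \ref{exmp_more_into_one} warns that descent is not downward-unique — and your cylinder argument resolves it correctly. But the paper's choice of $\beta_t$ over $\gamma$ makes the identification automatic, so it dispenses with that whole step. Both proofs are valid; yours buys nothing extra here but does highlight why one cannot casually transport representatives backward in time.
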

	
	\begin{proof}
		Suppose the otherwise: There exists $T_2>T_1$ and $c_2\in H_{n-1}(W(T_2) )$ such that $c_0\succ c_2$. Then there exists $\gamma_0\in c_0$ and $\gamma_2\in c_2$ that together in $W[T_0,T_2]$ bound some $n$-chain $C$. Without loss of generality we can assume that $\beta_t:=\{x:(x,t)\in C\}$  is an  $(n-1)$-chain without boundary for each $t\in [T_0,T_2]$, as in the proof of Proposition \ref{prop_unique_clss_inbetween}. Then $c_0(t)=[\beta_t]\in H_{n-1}(W[t])$ satisfies $c_0\succ c_0(t)$.
		
		By assumption $c_0$ breaks in some  $(K,T_1)$ with $K\subset M(T_1)$. Therefore, $K\cap C=\emptyset$. Since $K$ is compact and $C$ is closed, there exists a neighborhood of $K$ in $\R^{n+1}\x\R$ of the form $B_r(K)\x[T_1-\delta,T_1+\delta]$  that does not intersect $C$. Consequently, for all $t\in [T_1-\delta,T_1+\delta]$, $\beta_t$ avoids $ B_r(K)$.
		This contradicts the assumption that $c_0$ breaks at $(K,T_1)$. 
	\end{proof}
	
	Note that, vacuously, the trivial homology class does not break in any $(K,T)$.
	Moreover, if a homology class breaks in $(K_1,T)$ and $K_1\subset K_2\subset M(T)$, then it also breaks in $(K_2,T)$.
	
	One might wonder whether the converse of the above proposition is true. Actually, in the case of 2-dimensional MCF through cylindrical and spherical singularities, if a homology class terminates at some time $T$, then it actually breaks at some cylindrical singularity $(x,T)$. This is the statement of Theorem \ref{thm_exist_break_point}, which is one of the main results in \S \ref{sect_different_termination_time}. However, we are unsure whether the converse is true in general.

	\begin{prop}\label{prop_no_reg}
		No homology class breaks at a regular point.
	\end{prop}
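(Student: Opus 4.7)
The plan is by contradiction: suppose some $c_0 \in H_{n-1}(W(T_0))$ breaks at the regular point $(x, T_1)$, and produce, for every $t < T_1$ sufficiently close to $T_1$, a representative of $c_0(t)$ that avoids a fixed ball $B_{\rho/2}(x)$, directly contradicting Definition \ref{defn_breakage}.

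Since $(x, T_1)$ is a regular point of $\cM$, there is a spacetime cylinder $B_\rho(x) \times (T_1 - \delta, T_1 + \delta)$ inside which $\cM$ is a smooth classical MCF. Shrinking $\rho$ and $\delta$ if necessary, I may assume that for every $t$ in this interval $M(t) \cap B_\rho(x)$ is a smoothly embedded $n$-disc meeting $\partial B_\rho(x)$ transversely, so that $B_\rho(x) \setminus M(t)$ splits into two connected open sets $A_t^+$ and $A_t^-$. For each such $t$, the tubular neighborhood theorem supplies a diffeomorphism $\psi_t : B_\rho(x) \to B_\rho(0) \subset \R^{n+1}$ straightening $M(t) \cap B_\rho(x)$ onto the flat disc $\{y_{n+1} = 0\}$. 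In these coordinates the radial vector field $y \mapsto y$ is tangent to $\{y_{n+1} = 0\}$, so pulled back by $\psi_t$ it yields a vector field on $B_\rho(x) \setminus M(t)$ whose flow preserves each of $A_t^\pm$ individually.

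Now fix $t \in (T_1 - \delta, T_1)$ and a singular $(n-1)$-cycle $\gamma_t \in c_0(t)$, which we may take to be piecewise-smooth and transverse to both $M(t)$ and $\partial B_{\rho/2}(x)$. Using the flow of a suitable bump cut-off of the straightened radial field, push the portion of $\gamma_t$ inside $B_{\rho/2}(x)$ outward into the annular region $B_\rho(x) \setminus B_{\rho/2}(x)$, while leaving $\gamma_t$ fixed outside $B_\rho(x)$. The trace of this push-off is an $n$-chain in $W(t)$ whose boundary exhibits a new cycle $\gamma_t'$, homologous to $\gamma_t$ within $W(t)$ and thus still a representative of $c_0(t)$; by construction $\gamma_t'$ misses $B_{\rho/2}(x)$. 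Since $t \in (T_1 - \delta, T_1)$ was arbitrary, this contradicts breakage.

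The only nontrivial point to verify is that the homotopy never crosses $M(t)$, as the entire construction collapses otherwise. This is exactly what the straightening coordinates guarantee: the pulled-back radial field is tangent to $M(t)$, so its flow preserves each component of $B_\rho(x) \setminus M(t)$ separately, and each piece of $\gamma_t$ inside $B_{\rho/2}(x)$ (living in one of $A_t^\pm$) is pushed out through the same component. Once this is accepted, the homology descent $c_0 \succ [\gamma_t']$ follows automatically, and the proposition is proved.
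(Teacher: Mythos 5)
Your proof is correct and takes essentially the same approach as the paper, whose proof of this proposition simply asserts that because $M(t)\cap B$ is a smooth $n$-disc, any representative cycle can be homotoped within $W(t)$ to avoid $B$; you have made that assertion explicit via a straightened, cut-off radial push-off that is tangent to $M(t)$. The only point left implicit (in both accounts) is that compactness of $\gamma_t$ together with its positive distance from $M(t)$ bounds the flow time needed to clear $B_{\rho/2}(x)$, which is what lets a finite-time flow of the (cut-off) radial field suffice even though that field degenerates near the disc.
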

	\begin{proof}
		Suppose $(x,T)$ is a regular point. Then there exists a small ball $B$ around $x$ such that for all $t$ close to $T$,  $M_t\cap B$ is a smooth $n$-disk. It is clear that every $n$-chain can be homotoped to avoid $B$. Therefore, no homology class breaks at $(x,T)$.
	\end{proof}
	
	\begin{prop}\label{prop_no_sphere}
		No homology class breaks at a spherical singularity.
	\end{prop}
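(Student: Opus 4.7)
The plan is to exhibit a single open neighborhood $U$ of $x$ such that, for every $t<T$ sufficiently close to $T$, the class $c_0(t)$ admits a representative contained in $W(t)\setminus U$; by Definition \ref{defn_breakage} this directly precludes $c_0$ from breaking at $(x,T)$. I will treat the inward case, the outward one being analogous after interchanging $W_\ins$ and $W_\out$.

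The setup uses the definition of an inward spherical singularity, which gives smooth, multiplicity-one convergence of $\lambda(K_\ins(\lambda^{-2}t+T)-x)$ to the shrinking solid ball. Fixing $\delta>0$, this produces $\eta>0$ such that for each $t\in(T-\eta,T)$ the set $M(t)\cap B_{2\delta}(x)$ is a smoothly embedded, nearly round sphere $S_t$ bounding a small ball $B_t\subset K_\ins(t)$ with $\diam(B_t)\ll\delta$. I set $U:=B_\delta(x)$ and $U':=B_{2\delta}(x)$; then $M(t)\cap U'=S_t\subset B_t\subset U$, the shell $U'\setminus\overline{B_t}$ lies in $W_\out(t)\subset W(t)$, and in particular $\overline{U'}\setminus U\subset W(t)$.

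Now I would take any smooth 1-cycle $\gamma$ representing $c_0(t)$, transverse to $S_t$ and $\partial U$. Since $\gamma\cap S_t=\emptyset$ and $S_t$ separates $\R^{n+1}$ into $B_t$ and its unbounded complement, every connected component of $\gamma$ lies entirely on one side; any component contained in $B_t$ bounds a disc in $B_t\subset W_\ins(t)$ and may be discarded without altering the class $[\gamma]\in H_1(W(t))$. For each remaining component $\gamma_i$, the arcs of $\gamma_i\cap U'$ have endpoints on $\partial U$ and lie in the shell $U'\setminus\overline{B_t}$. The main step is then the following homotopy argument: the shell is homotopy equivalent to $S^n$ and therefore simply connected for $n\geq 2$, so each such arc can be homotoped rel endpoints through the shell (hence through $W(t)$) to an arc supported in $\overline{U'}\setminus U$. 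Replacing every such arc produces a representative $\gamma'\subset W(t)\setminus U$ of $c_0(t)$, contradicting the definition of breakage.
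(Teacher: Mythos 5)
Your proof is correct and follows essentially the same route as the paper's: near the spherical singularity $M(t)\cap B$ is a small sphere for $t<T$ close to $T$, one discards the loop components inside that sphere, and then one pushes the remainder of the representative away from the ball. The paper compresses the final step to the assertion ``clearly $\gamma$ can be homotoped within $W(t)$ to avoid $B$''; your shell argument, using that $U'\setminus\overline{B_t}$ is homotopy equivalent to $S^n$ and hence simply connected for $n\ge 2$, is precisely the justification of that sentence, so your write-up is a filled-in version of the same proof rather than a different approach.
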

	\begin{proof}
		Suppose otherwise. Without loss of generality, suppose some $c_0\in H_{n-1}(W(T_0))$ breaks at some spherical singularity $(x,T)$. Then there exists a small ball $B$ around $x$ such that for all $t<T$ close to $T$,  $M(t)\cap B$ is a smooth sphere. For each such $t$, let $\gamma$ be a representative of $c_0(t)$. By removing the components of $\gamma$ inside the sphere $M(t)\cap B$, we can assume that $\gamma$ lies outside the sphere. Thus clearly $\gamma$ can be homotoped within $W(t)$ to avoid $B$. This again contradicts the assumption that $c_0$ breaks at $(x,T)$.
	\end{proof}
	
	Lastly, we conclude this section with the following proposition, which provides us with a scenario where we know the inside homology classes must terminate. Namely, {\it if we take a compact shrinker and push it inward, then all non-trivial inside homology classes will terminate, while the outward ones will not.} This proposition will be crucial for us when we use Theorem \ref{thm:main1} to prove other main theorems.
	
	\begin{prop}\label{prop_perturb_shrinker_which_terminate_first}
		The setting is as follows.
		\begin{itemize} 
			\item  Let $\Sigma$ be a smooth, embedded,  compact  shrinker in $\R^3$.
			\item Let $S^0(-1)$ be a surface, lying strictly inside $\Sigma$, given by deforming $\Sigma$ within the inside region of $\Sigma$.
			\item Let $S^1(-1)$ be a surface, lying strictly outside $\Sigma$,  given by deforming $\Sigma$ within the outside region of $\Sigma$. 
			\item Note that the first homology groups of 
			$$\R^3\backslash \Sigma,\;\; \R^3\backslash S^0(-1),\textrm{ and }\R^3\backslash S^1(-1)$$
			can be canonically identified.
			\item Let  
			$$\cS=\{\sqrt{-t}\Sigma\}_{-1\leq t\leq 0}, \;\; \cS^0=\{S^0(t)\}_{t\geq -1}, \textrm{ and }\cS^1=\{S^1(t)\}_{t\geq -1} $$
			be the associated level set flows.
		\end{itemize}
		Then there exist times $T,\tilde T\in (-1,0)$ such that
		\begin{enumerate}
			\item  For each non-trivial element $a_0\in H_1(W_\ins^{\cS^0}(-1))$, $\ft(a_0)\leq\tilde T$. 
			\item For each  element $b_0\in H_1(W_\out^{\cS^0}(-1))$, $b_0(\tilde T)$ exists and is trivial. 
			\item For each  element $a_1\in H_1(W_\ins^{\cS^1}(-1))$, $a_1(T)$ exists and is trivial. 
			\item  For each non-trivial element $b_1\in H_1(W_\out^{\cS^1}(-1))$, $\ft(b_1)\leq T$.
		\end{enumerate}
	\end{prop}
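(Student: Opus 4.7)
The plan is to use the avoidance principle between the level set flows $\cS^0$, $\cS^1$ and the shrinker flow $\cS = \{\sqrt{-t}\Sigma\}$ as the fundamental tool. For $\cS^0$: since $S^0(-1) \subset \operatorname{int}(K_\ins^{\cS}(-1))$ with $\operatorname{dist}(S^0(-1),\Sigma) \geq \delta_0 > 0$, avoidance gives $K_\ins^{\cS^0}(t) \subset \operatorname{int}(K_\ins^{\cS}(t))$ with the same gap for all $t \in [-1,0)$. Because $K_\ins^{\cS}(t) = \sqrt{-t}\, K_\ins^{\cS}(-1)$ has diameter tending to zero, $K_\ins^{\cS^0}(t)$ must become empty beyond some time $\tilde T \in (-1,0)$, so $W^{\cS^0}(\tilde T) = \R^3$. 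Dually, from $S^1(-1)$ strictly outside $\Sigma$ one gets $K_\ins^{\cS}(t) \subset \operatorname{int}(K_\ins^{\cS^1}(t))$ with gap $\geq \delta_0$, and I will choose $T \in (-1,0)$ close enough to $0$ so that $\sqrt{-T}\Sigma$ together with a $\delta_0/2$-tubular neighborhood sits in $W_\ins^{\cS^1}(T)$.

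Claims (2) and (3) will follow by constructing explicit descending chains. For Claim (2), I represent $b_0$ by a loop $\gamma_0$ in $W_\out^{\cS^0}(-1) \cap W_\out^{\cS}(-1)$ (which is a homotopy equivalence via the collar between $S^0(-1)$ and $\Sigma$); the dual avoidance $W_\out^{\cS^0}(t) \supset W_\out^{\cS}(t) \supset W_\out^{\cS}(-1)$ keeps $\gamma_0$ in $W^{\cS^0}(t)$ for every $t \in [-1,\tilde T]$, so the cylinder $\gamma_0 \times [-1,\tilde T]$ is a 2-chain in $W^{\cS^0}[-1,\tilde T]$ realizing $b_0 \succ 0_{\tilde T}$, trivial because $H_1(\R^3) = 0$. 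For Claim (3), I push a representative of $a_1$ onto $\Sigma \subset K_\ins^{\cS^1}(-1)$ via the collar retraction to obtain a cycle $\gamma_1 \subset \Sigma$, and form the spacetime cone $C = \bigcup_{t\in[-1,T]} \sqrt{-t}\gamma_1 \times \{t\}$; this lies on the shrinker spacetime track, hence disjoint from $\cM^{\cS^1}$ by avoidance, so $C \subset W^{\cS^1}[-1,T]$. Its terminal loop $\sqrt{-T}\gamma_1$ is tiny and bounds a small cap $\Delta$ inside the $\delta_0/2$-tube of $\sqrt{-T}\Sigma \subset W_\ins^{\cS^1}(T)$; gluing $C \cup \Delta$ realizes $a_1 \succ 0_T$.

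The hardest parts are Claims (1) and (4), which assert termination rather than just trivial descent. The plan is a contradiction argument via a spacetime linking computation in $\R^4$. Suppose for Claim (4) that $b_1 \succ b_1(T)$ via a 2-chain $\Gamma \subset W^{\cS^1}[-1,T]$, and use Alexander duality in $\R^3 \times \{-1\}$ to find a 1-cycle $\beta' \subset \Sigma$ dual to $b_1$, with $\operatorname{lk}_{\R^3}(\gamma_1',\beta') \neq 0$ for $\gamma_1' \in b_1$. The spacetime sweep $C'$ of $\beta'$ along $\cS$, capped by a Seifert $D \subset W_\out^{\cS^1}(-1)$ for $\beta'$ at $t = -1$ and by a small cap $D_T$ for $\sqrt{-T}\beta'$ in the $\delta_0/2$-tube of $\sqrt{-T}\Sigma$ at $t = T$, assembles into a 2-cycle $C' + D - D_T$ contained in $W^{\cS^1}[-1,T]$; the intersection pairing $\partial\Gamma \cdot (C' + D - D_T) = 0$ then forces any putative terminal representative $\gamma_1''$ of $b_1(T)$ to link $\sqrt{-T}\Sigma$ with the same nonzero number as $\gamma_1'$ links $\Sigma$. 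Claim (1) is proved by the mirror argument using a cycle on $S^0(-1)$ dual to $a_0$, swept along the flow $\cS^0$ itself up to its extinction. The main obstacle I expect is ruling out such a terminal representative $\gamma_1''$: one must show that no loop in $W_\out^{\cS^1}(T)$ links $\sqrt{-T}\Sigma$ nontrivially, which amounts to forcing an outward neck pinch of $\cS^1$ to have occurred before $T$. Verifying this — and the inward analog for $\cS^0$ — will invoke the mean convex neighborhood property, the canonical neighborhood theorem (Theorem \ref{thm_canonical_nbd}), and the breakage criterion (Definition \ref{defn_breakage}, Proposition \ref{prop_homology_class_die}) established earlier in the paper.
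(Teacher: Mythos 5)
Your treatment of Claims (2) and (3) is essentially the paper's: use avoidance between $\cS$, $\cS^0$, $\cS^1$ to build a time-product or spacetime-cone chain, then cap it near the shrunken surface. One thing you should make explicit: the cap for Claim (3) exists because once $\diam(\sqrt{-T}\Sigma) < \epsilon := \dist(\Sigma,S^1(-1))$ the tube $B_\epsilon(\sqrt{-T}\Sigma)$ is \emph{star-shaped} (with respect to any point of $\sqrt{-T}\Sigma$), hence simply connected; that is the actual geometric property needed, and it forces the choice of $T$.

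The genuine gap is in Claims (1) and (4). You correctly locate the crux — one must rule out a terminal representative in $W_\out^{\cS^1}(T)$ linking $\sqrt{-T}\Sigma$ nontrivially — but you explicitly defer it as ``the main obstacle'' and propose to close it with the mean convex neighborhood theorem, the canonical neighborhood theorem (Theorem~\ref{thm_canonical_nbd}), and the breakage machinery of \S\ref{sect_different_termination_time}. Those results all presuppose that the flow is a MCF through cylindrical and spherical singularities. Here $\cS^0,\cS^1$ are arbitrary level set flows, and the proposition must hold with no restriction on singularity type — indeed it has to, because Proposition~\ref{prop_perturb_shrinker_which_terminate_first} is used to \emph{verify the hypotheses} of Theorem~\ref{thm:main1}, whose proof then argues by contradiction \emph{against} all the $\cS^s$ having only cylindrical/spherical singularities. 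Invoking that machinery here would be circular and is not justified by the hypotheses.

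What actually closes the gap is elementary. For Claim (4): with $\epsilon$ as above and $T$ chosen so $\diam(\sqrt{-T}\Sigma)<\epsilon$, avoidance forces any $2$-chain witnessing $b_1\succ b_1(T)$ to lie in $Y[-1,T]:=\bigcup_t(\R^3\setminus B_\epsilon(\sqrt{-t}\Sigma))\times\{t\}$. Rescaling each time slice by $1/\sqrt{-t}$ produces a $2$-chain $\tilde C$ lying entirely outside $\Sigma$, whose terminal loop lies outside the star-shaped, hence genus-$0$, set $B_{\epsilon/\sqrt{-T}}(\Sigma)$. Picking a loop $\alpha$ inside $\Sigma$ that $\beta_1$ links nontrivially, $\tilde C$ would transfer this nonzero linking number to the terminal loop, which is impossible since that loop lies outside a genus-$0$ surface enclosing $\alpha$. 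Claim (1) is even shorter and needs no linking at all: if $a_0(\tilde T)$ existed it would be trivial (as $S^0(\tilde T)=\emptyset$), so some $\alpha_0\in a_0$ bounds a $2$-chain $A\subset W_\ins^{\cS^0}[-1,\tilde T]\subset W_\ins^{\cS}[-1,\tilde T]$; rescaling slice-by-slice moves $A$ into $(\operatorname{int}\Sigma)\times[-1,\tilde T]$, and projecting to the $\R^3$ factor exhibits $\alpha_0$ as a boundary inside $\Sigma$, contradicting non-triviality of $a_0$. Finally, two concrete slips in your outline: the Seifert surface $D$ you place in $W_\out^{\cS^1}(-1)$ is supposed to bound $\beta'\subset\Sigma$, but $\Sigma$ lies strictly inside $S^1(-1)$, i.e.\ in $W_\ins^{\cS^1}(-1)$, so no such $D$ can exist in $W_\out^{\cS^1}(-1)$; and for Claim (1) you cannot ``sweep a cycle along the flow $\cS^0$ up to extinction,'' since $\cS^0$ is a level set flow and need not be smooth past its first singular time.
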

	\begin{proof}
		
		For the first claim, 
		note that:
		\begin{itemize}
			\item  $S^0(-1)$ is inside $\Sigma$.
			\item $\dist(\sqrt{-t}\Sigma,S^0(t))$ is non-decreasing in $t$ by \cite[Theorem 7.3]{EvansSpruck91}.
			\item $\Sigma$ shrinks self-similarly under the flow.
		\end{itemize}
		Thus, we can deduce the existence of $\tilde T<0$ such that for every $t\geq \tilde T$, $S^0(t)$ is empty.
		Consequently, for any non-trivial element  $a_0\in H_1(W^{\cS^0}_\ins(-1))$, either $\ft(a_0)\leq \tilde T$, or $a_0(\tilde T)$ still exists but is trivial. Suppose by contradiction that the latter holds. Then we can pick some $\alpha_0\in a_0$ such that $\alpha_0=\partial A$ for some $$A\subset W^{\cS^0}_\ins([-1,\tilde T])\subset W^{\cS}_\ins([-1,\tilde T]).$$ By rescaling each time slice of $A$, we can ensure that $\alpha_0$ bounds some
		$$\tilde A\subset (\textrm{interior region of }\Sigma)\x[-1,\tilde T].$$
		Projecting $\tilde A$ into the interior region of $\Sigma$, we have that $\alpha_0$ is homologically trivial, which contradicts the definition of $\alpha_0$. This concludes the proof of the first claim.
		
		For the second claim, since $\Sigma$ shrinks self-similarly under the flow, we know that $b_0$ has not terminated by the time $\tilde T (< 0)$ for the flow ${\sqrt{-t}\Sigma}$. Then by the fact that $S^0(t)$ lies inside $\sqrt{-t}\Sigma$ for each $t\in[-1,\tilde T]$, which is a result of the avoidance principle, we can deduce that $b_0(\tilde T)$ still exists for the flow $\cS^0$. However, as $S^0(\tilde T)$ is empty, it follows that $b_0(\tilde T)$ must be trivial.
		
		Let us define $$\epsilon=\dist(\Sigma,S^1(-1))\}.$$
		Pick a loop $\beta_1\in b_1$. Define $B_\epsilon(\sqrt{-t}\Sigma)$ as the $\epsilon$-neighborhood of $\sqrt{-t}\Sigma$, and denote
		\begin{align*}
			Y(t)&:=\R^3\backslash B_\epsilon(\sqrt{-t}\Sigma)\\
			Y[t_1,t_2]&:=\bigcup_{t\in[t_1,t_2]}(\R^3\backslash B_\epsilon(\sqrt{-t}\Sigma))\x\{t\}.
		\end{align*}
		We prove the fourth claim before the third. In order to prove the fourth claim, it suffices to show for some $-1<T<0$, there exists no $2$-chain $C\subset W^{\cS^1}_\out[0,T]$ such that $\partial C=\beta_1- \beta_2$, where $\beta_2$ is a closed $1$-chain outside $S^1(T)$. Since $S^1(-1)$ lies outside $\Sigma$, by the avoidance principle  it suffices to prove that:
		\begin{lem}
			For some $-1<T<0$, there does not exists a $2$-chain $C\subset Y[-1,T]$ such that $\partial C=\beta_1-\beta_2$ for some closed $1$-chain $\beta_2\subset Y(T)$.
		\end{lem}
		\begin{proof}
			Choose a value of $T$ that is sufficiently close to $0$ such that $\diam(\sqrt{-T}\Sigma)<\epsilon$. With this choice, the set $B_\epsilon(\sqrt{-T}\Sigma)$ is star-shaped with respect to any point on $\sqrt{-T}\Sigma$. Thus, the boundary $\partial B_\epsilon(\sqrt{-T}\Sigma)$ has genus $0$.
			
			Suppose by contradiction that there exists a $2$-chain $C\subset Y[-1,T]$ such that $\partial C=\beta_1-\beta_2$ for some closed $1$-chain $\beta_2\subset Y(T)$. By rescaling $C$ at each time slice $t$, we can construct another $2$-chain $\tilde C$ outside $\Sigma$ such that $\partial \tilde C=\beta_1-\sqrt{-T}\beta_2$.
			
			Since $\beta_1$, which lies outside $\Sigma$, is homologically non-trivial, we can pick a non-trivial loop $\alpha$ inside $\Sigma$ such that $[\beta_1]\in H_1(\R^3\backslash \alpha)$
			is non-trivial. Then by the existence of $\tilde C$, we have $[\beta_2]\ne 0$ in $H_1(\R^3\backslash \alpha)$ too.        However, this is impossible because $\sqrt{-T}\beta_2$ lies outside $B_{\epsilon/\sqrt{-T}}(\Sigma)$ while $\alpha$ lies inside, and $\partial B_{\epsilon/{\sqrt{-T}}}(\Sigma)$ has genus $0$ by the first paragraph of this proof.
		\end{proof}
		
		This finishes proving the fourth claim of Proposition \ref{prop_perturb_shrinker_which_terminate_first}. Finally, for the third claim, since $a_1(T)$ exists for the flow $\{\sqrt{-t}\Sigma\}_{t\leq 0}$, it follows from the avoidance principle that $a_1(T)$  exists for $\cS^1$. Moreover, since the inside of $S^1(T)$ contains $B_\epsilon(\sqrt{-T}\Sigma)$, which is star-shaped, we know $a_1(T)=0 $ in $H_1(W^{\cS^1}(T))$. 
	\end{proof}

	\section{Homology breakage of MCF through cylindrical and spherical singularities}\label{sect_different_termination_time}
	
	\subsection{MCF through cylindrical and spherical singularities}\label{sect_breakage_MCF_through_sing}
	In this section, we focus on 2-dimensional MCF $\cM=\{M(t)\}_{t\geq 0}$ through cylindrical and spherical singularities in $\R^3$, where the initial condition $M(0)$ is a smooth, closed surface. 
	
	\begin{prop}\label{nobreak1}
		For any $T_0\geq 0$, no element of $ H_1(W_\out(T_0))$ can break at an inward neck singularity, and no element of $ H_1(W_\ins(T_0))$ can break at an outward neck singularity.
	\end{prop}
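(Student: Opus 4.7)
I focus on the inward case; the outward case is symmetric. Suppose for contradiction that some $c_0 \in H_1(W_\out(T_0))$ breaks at an inward neck singularity $(x,T)$ with axis $\ell$. Then for every neighborhood $U$ of $x$ and every $t < T$ sufficiently close to $T$, every representative of $c_0(t)$ intersects $U$. The plan is to fix $U$ small and exhibit, for each such $t$, a representative of $c_0(t)$ that misses $U$, by pushing a given loop radially away from $\ell$.

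By Theorem \ref{thm_topological_canonical}, choose $\delta,\bar\delta > 0$ so that for each $t \in (T-\bar\delta, T)$, $K_\ins(t) \cap B_\delta(x)$ is $\epsilon$-close to a solid cylinder of some radius $\rho(t) < \delta/8$ around $\ell$, with $\rho(t) \downarrow 0$ as $t\to T$. In particular each cross-section of $K_\ins(t)\cap B_\delta(x)$ perpendicular to $\ell$ is approximately a round convex disk containing the point where $\ell$ meets this plane; hence for any $y \in W_\out(t) \cap B_\delta(x)$, the ray from the foot of $y$ on $\ell$ outward through $y$ has already exited the cross-sectional disk at $y$ and never re-enters $K_\ins(t)$ as it continues. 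Set $U = B_{\delta/4}(x)$, fix any representative $\gamma \in c_0(t)$, choose a smooth bump $\phi : \R^3 \to [0,1]$ with $\phi \equiv 1$ on $U$ and $\supp\phi \subset B_\delta(x)$, and define
$$\Phi_s(y) = y + \tfrac{s\delta}{2}\, \phi(y)\, \hat v(y),$$
where $\hat v(y)$ is the unit vector at $y$ perpendicular to $\ell$ pointing away from $\ell$ (well-defined on $W_\out(t)\cap B_\delta(x)$, which avoids $\ell$). Then $\Phi_s$ is the identity outside $B_\delta(x)$, hence continuous; $\Phi_s(y) \in W_\out(t)$ for all $y \in W_\out(t)$ and $s \in [0,1]$ by the cross-sectional convexity above; and for $y \in U$ one has $\dist(\Phi_1(y), \ell) \geq \delta/2$, so $\Phi_1(y) \notin U$. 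Therefore $\gamma' := \Phi_1 \circ \gamma$ is a loop in $W_\out(t)\setminus U$ freely homotopic in $W_\out(t)$ to $\gamma$, so $[\gamma'] = c_0(t)$ yet $\gamma'\cap U = \emptyset$, contradicting breakage.

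The outward-neck case proceeds identically, interchanging the roles of $K_\ins$ and $K_\out$ and applying the argument to loops in $W_\ins(t)$: now it is $K_\out(t)$ that is confined to a shrinking tube around $\ell$. The one step that requires care is showing the perpendicular push stays inside the correct region; this reduces to Theorem \ref{thm_topological_canonical}(1) supplying convex, near-round cross-sections of the shrinking solid cylinder, so that a point already outside the cross-sectional disk remains outside under the outward radial push.
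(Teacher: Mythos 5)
The radial-push approach is a genuinely different route from the paper's, which instead invokes the topological structure theorem (Theorem \ref{thm_topological_canonical}, item (\ref{thm_item_each_cnn_comp_genus0})) and a homotopy argument. Unfortunately, your proof rests on a claim that Theorem \ref{thm_topological_canonical} does not give and that is, in general, false.

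You assert that ``for each $t \in (T-\bar\delta, T)$, $K_\ins(t)\cap B_\delta(x)$ is $\epsilon$-close to a solid cylinder around $\ell$.'' But item (\ref{thm_item_is_colid_cylinder}) of Theorem \ref{thm_topological_canonical} gives the solid-cylinder description only at the single time $t = T-\bar\delta$. For later $t$ (in the dense set $J$) the region $K_\ins(t)\cap B$ is governed by item (\ref{thm_item_each_cnn_comp_genus0}), which explicitly allows it to break into several connected components that are spheres, cap-shaped balls, or at most one solid-cylinder piece. So the neighborhood of the forming neck is not, at later times, a single thin tube. Your argument then retreats to the weaker claim that every cross-section of $K_\ins(t)\cap B_\delta(x)$ perpendicular to $\ell$ is a convex disk \emph{containing the point $\ell\cap\{\text{plane}\}$}. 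This is the real load-bearing assertion: it is what makes $K_\ins(t)\cap B$ star-shaped from $\ell$, so that the radial push stays in $W_\out(t)$. But this is not a statement of Theorem \ref{thm_topological_canonical}. Convexity of cross-sections is addressed in the paper via Lemma \ref{lem_intersection_sphere}, but whether the convex disk actually contains $\ell$ is a quantitative alignment claim (that the offset of the disk's center from $\ell$ is smaller than its radius) that would need a separate argument from the canonical neighborhood theory — particularly delicate once $K_\ins(t)\cap B$ has split into small components whose individual scale is much smaller than $\delta$. Without this, a component of $K_\ins(t)$ could sit to one side of $\ell$, and the outward ray from $\ell$ through a point $y \in W_\out(t)$ could re-enter $K_\ins(t)$, breaking the homotopy.

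Two further, more minor, issues: (i) you claim $\hat v(y)$ is well-defined because $W_\out(t)\cap B_\delta(x)$ avoids $\ell$, but this requires $\ell\cap B_\delta(x) \subset K_\ins(t)$, which fails for $t$ close to $T$ once the neck has pinched; and (ii) the construction $\Phi_s$ can carry points of $W_\out(t)\cap B_\delta(x)$ outside $B_\delta(x)$, where the structure theorem gives no control (this is fixable since the paper actually sets things up in $B_{2\delta}(x)$, but it should be said). Finally, the structure statements (\ref{thm_item_transverse})--(\ref{thm_item_each_cnn_comp_genus0}) hold only for $t$ in the dense set $J$, so the argument should be phrased for those times; this is also how the paper argues, and is enough to contradict breakage, but your phrase ``for each such $t$'' glosses over it.
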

	\begin{proof}
		Let us just prove the first claim.
		Suppose by contradiction some $c_0\in H_1(W_\out(T_0))$  breaks at an inward neck singularity $(x,T)$, with $T>T_0$. Applying Theorem \ref{thm_topological_canonical} to $(x,T)$ with $\delta_0=1$ and any $R>R_0$, we obtain constants $\delta,\bar\delta>0$ and a dense subset $J\subset [T-\bar\delta,T+\bar\delta]$ satisfying the properties in Theorem \ref{thm_topological_canonical}. Let $B=B_\delta(x)$.

		Pick a time $t\in J\cap [T-\bar\delta,T)$. Since $c_0$ breaks at $T$, $c_0(t)$ still exists. Pick a
		loop $\gamma\in c_0(t)$. By Theorem \ref{thm_topological_canonical} (\ref{thm_item_each_cnn_comp_genus0}) (and recall  Figure \ref{fig::item_36}), we can homotope $\gamma$ within $W_\out(t)$ to avoid $B$. This can be done for all $t$ in $J\cap [T-\bar\delta,T)$, which is dense in $[T-\bar\delta,T)$. So we obtain a contradiction to the fact that $c_0$ breaks at $(x,T)$.  
	\end{proof}

	In the following proposition, we provide a more detailed description of the shape around a neck pinch at which homology class \emph{breaks}. Namely, in this case, prior to the singular time, only the last bullet point of Theorem \ref{thm_topological_canonical} (\ref{thm_item_each_cnn_comp_genus0}) can occur, i.e. $M(t)\cap B$ is a cylinder.
	\begin{prop}\label{prop_shape_of_breaking_neck}There exists a universal constant $R_0>0$ with the following significance.
		Suppose $c_0\in H_1(W_\ins(T_0))$ breaks at some inward neck singularity $(x,T)$. Let $\delta_0>0$. Then for each $R>R_0$, there exist constants
		$\delta\in(0,\delta_0)$, $\bar\delta>0$, and a dense subset $J\subset (T-\bar\delta,T+\bar\delta)$ with $T-\bar\delta\in J$, such that:
		\begin{enumerate}
			\item The first five items of Theorem \ref{thm_topological_canonical} hold.
			\item\label{prop_item_is_cylinder} For each $t\in J\cap [T-\bar\delta,T)$,
			$K_\ins(t)\cap B_\delta(x)$ is a solid cylinder such that its boundary consists of a connected component of $M(t)\cap B_\delta(x)$ that is a cylinder and two disks $D_1,D_2$ on $\partial B_\delta(x)$.
			\item\label{prop_item_non_zero_intersection_number} Moreover, for such $t$, every element $\gamma\in c_0(t)$ has a non-zero intersection  number (in $\Z$-coefficients) with each $D_i$.
		\end{enumerate}

		The outward case is analogous.
	\end{prop}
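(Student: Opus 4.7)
The plan is to first extract the parameters and the dense time set $J$ from Theorem \ref{thm_topological_canonical} applied at $(x,T)$ with the given $\delta_0$ and with any $R > R_0$ (where $R_0$ is the universal constant coming from Lemma \ref{lem_intersection_sphere}). This immediately gives items (\ref{thm_item_is_colid_cylinder})--(\ref{thm_item_each_cnn_comp_genus0}) of Theorem \ref{thm_topological_canonical}, which is item (1) here. In particular, writing $B=B_\delta(x)$, the set $K_\ins(T-\bar\delta)\cap B$ is a topological solid cylinder whose boundary consists of a cylindrical piece of $M(T-\bar\delta)\cap B$ together with two convex disks $D_1,D_2\subset\partial B$; for every $t\in J$, $K_\ins(t)\cap\partial B\subset D_1\cup D_2$ consists of at most one convex disk inside each $D_i$, and every connected component of $K_\ins(t)\cap B$ is of one of the three types shown in Figure \ref{fig::item_36}. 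All subsequent arguments are at a fixed $t\in J\cap [T-\bar\delta,T)$.

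\textbf{Item (2).} Suppose for contradiction that at some $t_*\in J\cap[T-\bar\delta,T)$ the set $K_\ins(t_*)\cap B$ is not a single solid cylinder. By the mean convex neighborhood property (\ref{thm_item_convex}) of Theorem \ref{thm_topological_canonical}, $K_\ins(t)\cap B\subset K_\ins(t_*)\cap B$ for all $t\in(t_*,T+\bar\delta)$, so the same failure persists on a set dense in $[t_*,T)$. I will show that under this failure every representative of $c_0(t)$ can be modified within $W_\ins(t)$ to avoid $B$, contradicting the breaking of $c_0$ at $(x,T)$. Going through the three types in (\ref{thm_item_each_cnn_comp_genus0}): a type-(a) spherical component is a 3-ball sitting in the interior of $B$, and any arc of a representative $\gamma$ landing in it would have to be entirely inside it (arcs cannot cross $M(t)$), hence is contractible and can be pushed off without affecting breaking; a type-(b) component is a 3-ball with a single disk on $\partial B$ lying inside some $D_i$, so using its contractibility one homotopes each arc of $\gamma$ rel endpoints onto $D_i$ and then slightly out of $B$ using the smooth collar of $D_i$ inside $K_\ins(t)$ afforded by the transversality (\ref{thm_item_transverse}). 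Since no type-(c) component is assumed, this removes $\gamma\cap B$ entirely while staying in $W_\ins(t)$, giving the contradiction. Hence at every such $t$ there is a type-(c) solid cylinder component using both $D_1,D_2$; the fact that no additional components coexist with it follows because $K_\ins(t)\cap\partial B\subset D_1\cup D_2$ is already exhausted (so no further type-(b) is possible), while type-(a) spherical components are ruled out by mean convexity starting from the initial solid cylinder $K_\ins(T-\bar\delta)\cap B$, which contains no spheres.

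\textbf{Item (3) and the main obstacle.} For the intersection number claim, fix $t\in J\cap[T-\bar\delta,T)$ and a representative $\gamma\in c_0(t)$ made transverse to $\partial B$ via (\ref{thm_item_transverse}). Let $n_i=\gamma\cdot D_i\in\Z$ denote the signed intersection, which depends only on $c_0(t)$ because $\partial D_i\subset M(t)$ is disjoint from $W_\ins(t)$ so that intersection with $D_i$ factors through $H_1(\R^3\setminus\partial D_i)\cong\Z$. Suppose $n_1=0$. The arcs of $\gamma\cap B$ lie in the solid cylinder $K_\ins(t)\cap B$ with endpoints on $D_1\cup D_2$; those with both endpoints in the same $D_i$ contribute $0$ to $n_1$, so $n_1$ counts the signed tally of the $D_1\leftrightarrow D_2$ arcs. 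The hypothesis $n_1=0$ lets me pair such arcs off with opposite orientations, and the contractibility of the solid cylinder (a topological $3$-ball) provides a $2$-chain cancelling each pair; the remaining arcs have both endpoints on a common $D_i$ and are pushed outside $B$ exactly as in Step 2. The resulting $1$-cycle represents $c_0(t)$ and lies in $W_\ins(t)\setminus B$, contradicting the breaking hypothesis. The same argument with $D_2$ gives $n_2\neq 0$. The main obstacle throughout is making the push-out step rigorous: one must verify that a collar of $D_i$ inside $K_\ins(t)$ extends smoothly across $\partial B$ into $\R^3\setminus\bar B$ while remaining in $W_\ins(t)$, and that the contractibility-based cancellations inside the solid cylinder can be realized through chains staying in the open set $W_\ins(t)$ rather than merely in the closed set $K_\ins(t)$; both follow from combining the transversality of $M(t)$ with $\partial B$ in (\ref{thm_item_transverse}) with the canonical neighborhood structure of Theorem \ref{thm_canonical_nbd}, which forces $M(t)\cap\partial B$ near $\partial D_i$ to be a smooth codimension-two submanifold admitting the required tubular neighborhoods.
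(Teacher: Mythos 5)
Your overall strategy matches the paper's: assume the topological description fails, then modify a representative of $c_0(t)$ to avoid $B$, contradicting breakage. For item (3), the paper's surgery is organized differently and, arguably, more cleanly: it pairs positive/negative intersection points of $\gamma$ on each $D_i$ and modifies $\gamma$ by inserting cancelling segments $L,-L$ drawn on $D_i$ itself and then pushed slightly off the disk. This keeps the modification localized near $\partial B$ and sidesteps the issue of realizing $2$-chains inside the open set $W_\ins(t)$, which you correctly identify as the main obstacle in your arc-pairing variant. Both surgeries are workable, but the paper's is more elementary.

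For item (2) there is a structural gap in your argument. You assume failure at a single $t_*$ and then claim the failure persists for $t>t_*$ by mean convexity. Mean convexity gives $K_\ins(t)\cap B\subset K_\ins(t_*)\cap B$, which does propagate the ``no cylinder'' failure mode forward (disconnected pieces cannot re-merge), but it does \emph{not} propagate a hypothetical ``cylinder $+$ sphere'' failure: the sphere component could shrink away before $T$, after which the set is a single solid cylinder and the failure vanishes. Your subsequent attempt to exclude type-(a) spheres outright by ``mean convexity starting from the initial solid cylinder, which contains no spheres'' is not a valid deduction --- Theorem \ref{thm_topological_canonical}(\ref{thm_item_each_cnn_comp_genus0}) explicitly permits sphere components, and mean convexity alone does not prevent a pinch-off creating one inside $B$. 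Excluding them would require a separate input (for instance, isolation of spherical singularities in spacetime for $\bar\delta$ small, as is used in Proposition \ref{prop_break_in_sing_set}). The paper avoids all of this by taking as the contradictory hypothesis that the failure occurs along a full sequence $t_i\uparrow T$ --- which is exactly the form needed to contradict breakage --- so no persistence argument is required. Your proof is repairable by adopting that structure from the start.
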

	\begin{proof}
		We will just prove the inward case. Let us apply Theorem \ref{thm_topological_canonical} to $(x,T)$ to obtain the constants $\delta,\bar\delta$ and the subset $J\subset[T-\bar\delta,T+\bar\delta]$.  Let $B=B_\delta(x)$. In addition the first five items of Theorem \ref{thm_topological_canonical} will hold.
		
		We need to show that for each $t\in J\cap(T_0,T)$ sufficiently close to $T$, $K_\ins(t)\cap B_\delta(x)$ satisfies the description in (\ref{prop_item_is_cylinder}): After that  we could just shrink $\bar\delta$ and the set $J$ to guarantee (\ref{prop_item_is_cylinder}). Suppose by contradiction that there exists a sequence in $J$, $t_1,t_2,...\uparrow T$, such that $K_\ins(t_i)\cap B_\delta(x)$ violates the description in (\ref{prop_item_is_cylinder}). Fix one $t_i$. Note that Theorem \ref{thm_topological_canonical} (\ref{thm_item_at_most_two_convex_disks}) and (\ref{thm_item_each_cnn_comp_genus0}) together imply that $K_\ins(t_i)\cap B$ can have {\it at most one} cylindrical component.
		Thus, in our case, $K_\ins(t_i)\cap B$ actually has no cylindrical component.
		As a result, any connected component
		$K$ of $K_\ins(t_i)\cap B$ satisfies either one of the following by Theorem \ref{thm_topological_canonical}   (\ref{thm_item_each_cnn_comp_genus0}): 
		\begin{itemize}
			\item $\partial K$ is a connected component of $M(t)\cap B$ that is a sphere.
			\item $\partial K$ consists of a connected component of $M(t)\cap B$ that is an disc and another disc on $\partial B$.
		\end{itemize}
		In either situation, any element of $c_0(t_i)$ can be perturbed to avoid $B$. Applying this argument to each $t_i$, we obtain a contradiction to the fact that $c_0$ breaks at $(x,T)$. 
		
		Finally, to prove (\ref{prop_item_non_zero_intersection_number}), it suffices to show that for each $t\in J\cap(T_0,T)$ sufficiently close to $T$, $c_0(t)$ satisfies the description of (\ref{prop_item_non_zero_intersection_number}): Then we could just  shrink $J$, and we would be done. Suppose otherwise, so that   there exists a sequence in $J$, $t_1,t_2,...\uparrow T$, such that $c_0(t_i)$ violates the description of 
		(\ref{prop_item_non_zero_intersection_number}). Then for each $t_i$, we can find a loop $\gamma\in c_0(t_i)$ with intersection number zero with some connected component of $K_\ins(t_i)\cap \partial B$. In fact, since $K_\ins(t_i)\cap B$ is a cylinder by (\ref{prop_item_is_cylinder}), 
		$\gamma$ has intersection number zero with {\it both}  connected components $D_1,D_2$ of $K_\ins(t_i)\cap \partial B$ (which are discs). To contradict the fact that  $c_0$ breaks at $(x,T)$, it suffices to find another element of $c_0(t_i)$
		that avoids $B$.
		
		Indeed, this can be proved as follows. We can assume $\gamma$ intersects $\partial B$ transversely. Since $\gamma$ has intersection number zero with $D_1$,
		we can pair up each positive intersection point of $\gamma\cap D_1$ with a negative one. 
		Now fix a pair, and draw a line segment $L$ on $D_1$ to connect the pair of points. Adding $L$ and $-L$ to $\gamma$, and slightly pushing the resulting curve away from $D_1$ around $L,-L$, we can obtain another representative of $c_0(t_i)$ that avoids this pair of intersection points.
		And we do this for each pair. Then at the end, we get a curve belonging to $c_0(t_i)$ that avoids $D_1$ completely. Then, we repeat this process with $D_2$, to get a curve that avoids $D_2$ too. Lastly, we discard all connected components of the curve that are in $K$, which are all trivial as $K$ is a solid cylinder, to obtain an element of $c_0(t_i)$ that avoids $B$, as desired.
	\end{proof}

	Denote by $\cS^{\ins}_\sphere$ the set of inward spherical singularities of $\cM$, and by $\cS^{\ins}_\neck$ the set of inward neck  singularities of $\cM$. Similarly, we define $\cS^{\out}_\sphere$ and $\cS^{\out}_\neck$. Then, we denote by $S^{\ins}_\sphere(t)\subset \R^3$ the slice of $\cS^{\ins}_\sphere$ at time $t$, and proceed similarly for the other three sets.
	
	\begin{lem}\label{lem_S_in_S_out_compact}
		$S_{\neck}^\ins(T)$ and $S_{\neck}^\out(T)$ are compact sets.    
	\end{lem}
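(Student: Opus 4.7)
Boundedness of both sets follows immediately from the avoidance principle: since $M(0)$ is a compact initial surface, the entire spacetime track of $\cM$ is contained in a bounded region of $\R^3\x[0,\infty)$, so $S_\neck^\ins(T)\cup S_\neck^\out(T)\subset M(T)$ is bounded. The substance of the lemma is closedness, which by symmetry I will prove only for $S_\neck^\ins(T)$. Let $x_i\in S_\neck^\ins(T)$ with $x_i\to x$. Then $x\in M(T)$ since $M(T)$ is closed, and $x$ must be a singular point of $\cM$: otherwise the openness of the regular set in spacetime would force $x_i$ to be regular for all sufficiently large $i$, contrary to hypothesis.

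To rule out that $x$ is a spherical singularity, I plan to use Huisken's monotonicity formula. Each $x_i$ is a multiplicity-one cylindrical singularity, so its Gaussian density is $\sqrt{2\pi/e}$; meanwhile, a multiplicity-one spherical singularity has density $4/e$, and a direct computation gives $4/e<\sqrt{2\pi/e}$. Upper semicontinuity of Gaussian density at singular points then yields
\[
\Theta(x,T)\,\geq\,\limsup_{i\to\infty}\Theta(x_i,T)\,=\,\sqrt{2\pi/e}\,>\,4/e,
\]
so the tangent flow at $(x,T)$ cannot be a shrinking sphere. Since $\cM$ has only cylindrical and spherical singularities, $x$ is forced to be a neck singularity.

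The main obstacle is to show that $x$ is \emph{inward} rather than outward. I will argue by contradiction: suppose $x$ is outward, and apply the outward analogue of Theorem~\ref{thm_topological_canonical}~(\ref{thm_item_convex}) at $(x,T)$ to obtain $\delta,\bar\delta>0$ such that
\[
K_\out(t_2)\cap B_\delta(x)\subset K_\out(t_1)\setminus M(t_1),\qquad T-\bar\delta<t_1<t_2<T+\bar\delta.
\]
For $i$ large, $x_i\in B_\delta(x)$, and applying Theorem~\ref{thm_topological_canonical}~(\ref{thm_item_convex}) at the inward neck singularity $(x_i,T)$ produces $\delta_i,\bar\delta_i>0$ with $B_{\delta_i}(x_i)\subset B_\delta(x)$ and
\[
K_\ins(t_2)\cap B_{\delta_i}(x_i)\subset K_\ins(t_1)\setminus M(t_1),\qquad T-\bar\delta_i<t_1<t_2<T+\bar\delta_i.
\]
Choose $t_1<t_2<T$ in the intersection of the two time intervals, sufficiently close to $T$ that $M(t_2)\cap B_{\delta_i}(x_i)\neq\emptyset$; this nonemptiness follows from $x_i\in M(T)\cap B_{\delta_i}(x_i)$ and Hausdorff continuity of the spacetime track. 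Any $p\in M(t_2)\cap B_{\delta_i}(x_i)$ lies in $K_\ins(t_2)\cap K_\out(t_2)$, so the two inclusions above push $p$ into
\[
(K_\ins(t_1)\setminus M(t_1))\cap(K_\out(t_1)\setminus M(t_1))\,=\,W_\ins(t_1)\cap W_\out(t_1)\,=\,\emptyset,
\]
a contradiction. Hence $x$ is inward, and the argument for $S_\neck^\out(T)$ is entirely symmetric.
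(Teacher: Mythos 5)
Your proof is correct and follows essentially the same route as the paper: you rule out regular and spherical limit points via upper semicontinuity of the Gaussian density (the paper uses exactly this density argument), and then rule out the wrong inward/outward type via the mean convex neighborhood property. The only difference is that you spell out the final contradiction explicitly by chasing a point of $M(t_2)$ near $x_i$ through both inclusions into $W_\ins(t_1)\cap W_\out(t_1)=\emptyset$, whereas the paper states the incompatibility of inward and outward motion in one line; this is a matter of detail, not of method.
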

	
	\begin{proof}
		We only show $S_{\neck}^\ins(T)$ is compact and the proof for $S_{\neck}^\out(T)$ is the same. It suffices to show $\overline{S_{\neck}^\ins(T)}=S_{\neck}^\ins(T)$. By the semi-continuity of the Gaussian density, a limit point $p$ of $S_{\neck}^\ins(T)$ must be a neck singularity. Hence it suffices to show $p\in S_{\neck}^\ins(T)$. We prove by contradiction: Suppose not, then $p\in S_{\neck}^\out(T)$, and by mean convex neighborhood theorem, there is a neighborhood $U$ of $p$ and $\delta>0$ such that the MCF $\{M_t\}_{t\in[T-\delta,T+\delta]}$ in $U$ moves outward. This contradicts the assumption that $p$ is a limit point of $S_{\neck}^\ins(T)$.
	\end{proof}

	\begin{prop} \label{prop_break_in_sing_set} Suppose $c_0\in H_1( W_\ins(T_0) )$ terminates at some time $T>T_0$. Then $c_0$ breaks in $(S^\ins_\neck(T),T)$. The outward case is analogous.
	\end{prop}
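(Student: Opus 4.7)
I argue by contradiction: suppose $c_0$ terminates at $T$ but does not break in $(S^\ins_\neck(T), T)$. By negating Definition \ref{defn_breakage}, there exist an open set $U \supset S^\ins_\neck(T)$, a sequence $t_i \uparrow T$, and representatives $\gamma_i \in c_0(t_i)$ contained in $V := \R^3 \setminus U$. Since $M(t)$ and all relevant loops lie in a fixed bounded region, I may enlarge $U$ at infinity to assume $V$ is bounded. Because $U \supset S^\ins_\neck(T)$, the compact set $M(T) \cap V$ contains no inward neck singularities; the remaining singularities on $M(T) \cap V$ are outward necks and (inward or outward) spherical singularities. The plan is to construct some $c \in H_1(W_\ins(T'))$ with $c_0 \succ c$ for $T' > T$, contradicting $\ft(c_0) = T$.

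Cover the compact set $M(T) \cap V$ by finitely many pairwise disjoint canonical-neighborhood balls $B_{p_j} = B_{r_j}(p_j) \subset V$, $j = 1, \ldots, k$, with a common time radius $\delta > 0$, provided by Theorem \ref{thm_canonical_nbd} at outward necks and the evident smooth or spherical structure at the other types. Let $\Omega := \cup_j B_{p_j}$. Since $\cM \subset \R^3 \times [0, \infty)$ is closed and $V \setminus \Omega$ is compact with $M(T) \cap (V \setminus \Omega) = \emptyset$, after shrinking $\delta$ we have $M(t) \cap V \subset \Omega$ for all $t \in (T - \delta, T + \delta)$.

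Choose $t_0 \in (T - \delta, T)$ with $\gamma_0 := \gamma_i \in c_0(t_0)$ satisfying $\gamma_0 \subset V$. The crucial step is to modify $\gamma_0$, within its class in $W_\ins(t_0)$, to a loop $\gamma_0' \subset V \setminus \Omega$. At a regular $p_j$, $W_\ins(t_0) \cap B_{p_j}$ is a simply connected half-ball, so arcs of $\gamma_0$ inside can be homotoped rel endpoints to $\partial B_{p_j}$ and pushed out. At an inward spherical $p_j$, $W_\ins(t_0) \cap B_{p_j}$ equals the open interior of a small shrinking $K_\ins$-component, which is a distinct connected component of $W_\ins(t_0)$; since $c_0$ terminates at $T$, $c_0(t_0)$ is non-trivial, so $\gamma_0$ cannot lie entirely in this ball and therefore $\gamma_0 \cap B_{p_j} = \emptyset$ automatically. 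At an outward spherical $p_j$, $W_\ins(t_0) \cap B_{p_j}$ is $B_{p_j}$ minus a small inner $K_\out$-bubble, a simply connected shell, so arcs can again be homotoped to $\partial B_{p_j}$. At an outward-neck $p_j$, I follow the inside/outside-swapped version of the proof of Proposition \ref{nobreak1}: by the outward analog of Theorem \ref{thm_topological_canonical}(6), every connected component of $K_\out(t_0) \cap B_{p_j}$ is a topological $3$-ball, which lets us homotope $\gamma_0$ off $B_{p_j}$ within $W_\ins(t_0)$ without altering its class. Disjointness of the $B_{p_j}$'s allows the modifications to be applied sequentially.

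For $t \in [t_0, T + \delta/2]$, since $\gamma_0' \subset V \setminus \Omega$ and $M(t) \cap V \subset \Omega$, we have $\gamma_0' \cap M(t) = \emptyset$. A level-set function $u$ for $\cM$ (Lemma \ref{lem_relate_Kin_func}) is continuous, starts negative on $\gamma_0' \times \{t_0\}$, and does not vanish on $\gamma_0' \times [t_0, T + \delta/2]$, hence stays negative; so $\gamma_0' \subset W_\ins(t)$ throughout. The product $2$-chain $C := \gamma_0' \times [t_0, T + \delta/2]$ then lies in $W_\ins[t_0, T + \delta/2]$ with $\partial C = \gamma_0' \times \{T + \delta/2\} - \gamma_0' \times \{t_0\}$, witnessing $c_0 \succ [\gamma_0'] \in H_1(W_\ins(T + \delta/2))$, the desired contradiction. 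The main obstacle is the homology-preserving modification at outward necks; it rests on the canonical-neighborhood classification together with the inside/outside-swapped version of the argument in Proposition \ref{nobreak1}.
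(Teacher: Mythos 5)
Your overall contradiction scheme (negate breakage, extract $\gamma_i\in c_0(t_i)$ avoiding a neighborhood $U$ of $S^\ins_\neck(T)$, then show $c_0$ survives past $T$) is the same as the paper's, but the execution diverges in an important way. The paper does \emph{not} try to produce a fixed loop $\gamma_0'$ disjoint from $M(t)$ for all $t$ near $T$; instead it constructs a one-parameter, time-dependent deformation $\{\gamma^t\}_{t\in[t_i,T]}$ of $\gamma_i$ that follows the flow, using (a) a uniform curvature/speed bound on the smooth annular region $\overline{\tilde U_\ins\setminus U_\ins}$ so that nothing outside $\tilde U_\ins$ can be swept into $U_\ins$ if $t_i$ is close enough to $T$, (b) outward evolution on a single big neighborhood $U_\out$ of $S^\out_\neck(T)$, and (c) removing sphere components. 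Because it works with the two compact sets $S^\ins_\neck(T)$, $S^\out_\neck(T)$ directly and shows the complement carries only finitely many spherical singularities near $T$, it never needs a finite collection of small pairwise disjoint balls. Your static-loop plus product $2$-chain idea is legitimate in spirit, but it places a much heavier burden on the spatial modification step, and that is exactly where your argument breaks.

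The concrete gap is the assertion that $M(T)\cap V$ can be covered by \emph{finitely many pairwise disjoint} canonical-neighborhood balls. This is false in general: the set of outward neck singularities at time $T$ can be a positive-dimensional compact set (e.g.\ a Lipschitz curve), and any finite cover of such a set by open balls must have nontrivial overlaps. Once the balls overlap, the ``apply the modifications sequentially'' step no longer terminates: pushing an arc of $\gamma_0$ out of $B_{p_1}$ can push it into $B_{p_2}$, and pushing it out of $B_{p_2}$ can push it back into $B_{p_1}$, with no a priori control. (In the paper's Theorem \ref{thm_exist_break_point}, which does use finitely many balls, this kind of problem is defused by a length-minimizing representative; nothing of the sort appears here.) Replacing the disjoint-balls step with the paper's two big neighborhoods $U_\ins$, $U_\out$ of the compact neck sets, plus a handful of isolated sphere balls, would repair the argument, but then you would essentially be reproducing the paper's proof. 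A smaller imprecision worth noting: at an inward spherical point you claim $\gamma_0\cap B_{p_j}=\emptyset$ ``automatically''; what is actually true is that any components of $\gamma_0$ lying in the tiny shrinking $K_\ins$-bubble are null-homologous and may be deleted, which requires a (short) extra sentence.
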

	\begin{proof}
		We will only prove the inward case, as the outward case follows analogously. Suppose the otherwise: There exist a neighborhood $U$ of $S^\ins_\neck(T)$ in $\R^3$, an increasing sequence of times $t_1,t_2,...\uparrow T$, and elements $\gamma_i\in c_0(t_i)$ such that each $\gamma_i$ is disjoint from $U$.

		By the mean convex neighborhood theorem and the compactness of $S^\ins_\neck(T)$ and $S^\out_\neck(T)$ from Lemma \ref{lem_S_in_S_out_compact}, we can further pick open neighborhoods $U_\ins, \tilde U_\ins$ with
		$$S^\ins_\neck(T)\subset U_\ins \subset\subset \tilde U_\ins \subset\subset  U,$$
		an open neighborhood $U_\out$ of $S^{\out}_\neck(T)$, and two times $T_1<T<T_2$ such that:
		\begin{itemize}
			\item $\tilde U_\ins$ and $U_\out$ are disjoint.
			\item In the time interval  $(T_1,T_2)$,
			$M(t)\cap \tilde U_\ins$ {\it evolves inward} (i.e.  
			$$K_\ins(t_2)\cap \tilde U_\ins\subset K_\ins(t_1)\backslash M(t)$$
			for every $T_1 <t_1<t_2<T_2$)
			while $M(t)\cap  U_\out$ {\it evolves outward}. 
		\end{itemize}
		
		By Huisken's analysis of spherical singularities (see also the special case of \cite[Theorem 4.6]{ColdingMinicozzi16_SingularSet}), each spherical singularity is isolated in spacetime. Therefore, the limit points of spherical singularities can only be cylindrical singularities.
		
		We claim that after appropriately shrinking the time interval $[T_1,T_2]$, $$ (\R^3\backslash (U_{\ins}\cup U_{\out}))\x [T_1,T_2]$$ has only finitely many singular points, and we can thus assume such singular points all are spherical singularities at time $T$. In fact, suppose not, there exists a sequence of distinct singular points $\{p_i\}_{i=1}^\infty$ outside $ U_{\ins}\cup U_{\out}$, with singular time $t_i\to T$. Then by the compactness of the singular set of $\cM$ and the previous paragraph, there is a subsequence converging to a cylindrical singularity in $(S^{\ins}_\neck(T)\cup S^{\out}_\neck(T))\x\{T\}$. This contradicts our choice of the $p_i$'s. 
		
		As a consequence of the claim, by shrinking $[T_1,T_2]$ and the neighborhoods $\tilde U_\ins$ and  $U_\out$, we can assume $$\overline{\tilde U_\ins\backslash U_\ins}\x[T_1,T_2]$$ consists only of smooth points. Furthermore,
		we can    
		choose a neighborhood $V_\ins$ 
		of $S^\ins_\sphere(T)\backslash \overline{\tilde U_{\ins}}$ such that $M(t)\cap V_\ins$ is a finite union of convex smooth spheres for each $t\in [T_1,T_2]$, using what we proved in the previous paragraph. Similarly, we can find a neighborhood $V_\out$ for $S^\out_\sphere(T)\backslash \overline{\tilde U_{\out}}$ with analogous properties. We can assume the closures of $\tilde U_\ins,U_\out, V_\ins, V_\out$ are all disjoint. Moreover, $M(t)\backslash (U_\ins\cup U_\out\cup V_\ins\cup V_\out)$ evolves smoothly for $t\in [T_1,T_2]$.

		To derive a contradiction to  $\ft(c_0)=T$, we are going to prove that for some $t_i$, there exists a smooth deformation of $\gamma_i$, $\{\gamma^t\subset W_\ins(t)\}_{t\in[t_i,T]}$, with $\gamma^{t_i}=\gamma_i$, thereby letting $\gamma_i$ ``survive" up to  time $T$. Note that:
		
		\begin{itemize}
			\item By the smoothness of $M(t)$ in $\overline{\tilde U_\ins\backslash U_\ins}$ for $t\in [T_1,T_2]$, $$C:=\sup_{t\in[T_1,T_2],\;x\in M(t)\cap \overline{\tilde U_\ins\backslash U_\ins}}|A|<\infty.$$ Thus, the velocity of the flow in this spacetime region is bounded by $C$.
			Thus, since $\gamma_i$ avoids $\tilde U_\ins$, we can take a $t_i\in (T_1,T)$ sufficiently close to $T$ such that there is not enough time for any point of $M(t_i)\backslash \tilde U_\ins$ to be pushed into $U_\ins$ by time $T$. 
			
			\item Note that $M(t)$ evolves outward in $\tilde U_\out$ for $t\in[T_1,T_2]$. 
			\item  Since $V_\ins$ and $V_\out$ consists of spheres, we can remove the components of $\gamma_i$ inside the spheres, so we may assume $\gamma_i$ avoids $V_\ins$ and $V_\out$. 
		\end{itemize}
		
		Combining the above observations, we can construct a smooth deformation of $\gamma_i$, $\{\gamma^t\subset W_\ins(t)\}_{t\in[t_i,T]}$, using the evolution of MCF, with $\gamma^{t_i}=\gamma_i$. This contradicts that $\ft(c_0)=T$.
	\end{proof}
	
	Here comes a key theorem which supports that our definition of homology termination and breakage would accurately describe the heuristic phenomenon shown in Figure \ref{fig::a_break}.
	
	\begin{thm} \label{thm_exist_break_point} Suppose $c_0\in H_1(W_\ins(T_0) )$ terminates at some time $T>T_0$. Then $c_0$ breaks at some inward neck singularity $(x,T)$.
		
		The outward case is analogous.
	\end{thm}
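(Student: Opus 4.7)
The strategy is to refine the breakage in the compact set $(S^\ins_\neck(T), T)$ supplied by Proposition \ref{prop_break_in_sing_set} down to a single point, via a compactness argument combined with the intersection-number/cap-off mechanism from the proof of Proposition \ref{prop_shape_of_breaking_neck}(\ref{prop_item_non_zero_intersection_number}). I would argue by contradiction: assume $c_0$ does not break at any $x \in S^\ins_\neck(T)$. For each $x \in S^\ins_\neck(T)$, Theorem \ref{thm_topological_canonical} supplies a canonical cylindrical ball $B_{\delta(x)}(x)$; using Lemma \ref{lem_S_in_S_out_compact}, extract a finite subcover $B_k := B_{\delta(x_k)}(x_k)$, $k = 1, \ldots, N$, which I arrange to be pairwise disjoint by shrinking their radii. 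The goal reduces to producing a sequence $t_j \uparrow T$ together with representatives $\gamma_j \in c_0(t_j)$ disjoint from $V := \bigcup_k B_k$, which contradicts breakage in $(S^\ins_\neck(T), T)$.

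I would encode whether $c_0(t)$ ``wraps the neck'' at $x_k$ by the signed intersection number $n_k(t) := \gamma \cdot D^k_1(t)$ of any representative $\gamma$ of $c_0(t)$ with the end disk $D^k_1(t)$ of the cylindrical solid $K_\ins(t) \cap B_k$, which is well-defined on the dense smooth-transverse set $J$ from Theorem \ref{thm_topological_canonical} and is a homological invariant of $c_0(t) \in H_1(W_\ins(t))$. The cap-off surgery from the proof of Proposition \ref{prop_shape_of_breaking_neck}(\ref{prop_item_non_zero_intersection_number}) shows that $n_k(t) = 0$ is equivalent to $c_0(t)$ admitting a representative missing $B_k$, and since the $B_k$ are disjoint, the surgeries in different $B_k$ can be performed independently -- so if $n_k(t) = 0$ for every $k$ at a common $t \in J$, then $c_0(t)$ admits a representative missing $V$. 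Conversely, a geometric argument analogous to that in Proposition \ref{prop_shape_of_breaking_neck}(\ref{prop_item_non_zero_intersection_number}) shows that if $n_{k_0}(t) \neq 0$ for every $t \in J$ sufficiently close to $T$, then every representative of $c_0(t)$ is forced through the cylindrical neck at $x_{k_0}$; since that cylindrical cross-section collapses onto $x_{k_0}$ as $t \to T$, every representative of $c_0(t)$ must meet every prescribed neighborhood of $x_{k_0}$, i.e., $c_0$ breaks at $x_{k_0}$, contradicting the standing assumption.

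The delicate remaining step, which I expect to be the main technical obstacle, is producing a sequence $t_j \uparrow T$ in $J$ at which $n_k(t_j) = 0$ for \emph{every} $k$ simultaneously. The contrapositive above guarantees that for each $k$ there is at least a sequence $\tau^k_i \uparrow T$ with $n_k(\tau^k_i) = 0$; the task is to combine these into a single sequence. I would accomplish this by exploiting the mean-convex neighborhood structure inside each $B_k$ from Theorem \ref{thm_topological_canonical}(\ref{thm_item_convex}): a representative of $c_0(\tau^k_i)$ avoiding $B_k$ can be transported along the flow (using smoothness away from the singular set, together with a spacetime-cylinder chain as in the proof of Proposition \ref{prop_unique_clss_inbetween} to identify homology classes across nearby times) to yield representatives of $c_0(t)$ still avoiding $B_k$ for all $t$ in an interval around $\tau^k_i$. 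Thus the set $\{t \in J : n_k(t) = 0\}$ is open, and since it accumulates at $T$, a careful analysis using the a.e.-time regularity and the partial regularity of the singular set (\cite{White97_Stratif, ColdingMinicozzi16_SingularSet}) will show it is in fact dense in $J \cap (T - \bar\delta, T)$. The finite intersection $\bigcap_k \{t \in J : n_k(t) = 0\}$ then remains dense near $T$, supplying the required sequence $t_j \uparrow T$. The rigorous justification of the transport argument -- in particular, controlling the homology classes of representatives across singular times lying outside the $B_k$ without changing their intersection numbers with the moving disks $D^k_1(t)$ -- is the main technical point.
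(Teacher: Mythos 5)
Your overall strategy (argue by contradiction, cover $S^\ins_\neck(T)$ by finitely many canonical balls using Lemma \ref{lem_S_in_S_out_compact}, and use intersection numbers with the end-disks of the canonical cylinders to decide whether a representative can be pushed out of each ball) is the right framework, and the cap-off/surgery mechanism you invoke from Proposition \ref{prop_shape_of_breaking_neck} is indeed the relevant tool. But the step you flag as the ``main technical obstacle'' is a genuine gap, and your proposed route through it (openness and density of $\{t : n_k(t)=0\}$, followed by a finite intersection) is both unfinished and the wrong road. The missing insight is that you do not need to combine sequences of good times at all: the intersection number $n_k(t)$ is \emph{constant} in $t$ on $J(x_k)\cap(T-\bar\delta(x_k),T)$. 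This is because, by the mean convex neighborhood property (Theorem \ref{thm_topological_canonical}~(\ref{thm_item_convex})), the boundary circle $\partial D^k_1(t_*)$ of the end-disk at a fixed reference time $t_*$ remains inside $\R^3\setminus W_\ins(t)$ for all later $t$, so any chain $\Gamma \subset W_\ins[t_*,t]$ witnessing $c_0(t_*)\succ c_0(t)$ is disjoint from it; hence $\link(\gamma_{t_*},\partial D^k_1(t_*)) = \link(\gamma_t,\partial D^k_1(t_*))$, and the nestedness $D^k_1(t)\subset D^k_1(t_*)$ identifies the latter with $n_k(t)$. This is precisely the content of the paper's Lemma \ref{lem_linking_number_zero}, phrased there via linking numbers. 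Once you know $n_k$ is constant, the standing assumption $(\star)$ (applied with $B_{\delta(x_k)}(x_k)\subset U_{x_k}$) directly gives one time at which $n_k=0$, hence $n_k\equiv 0$ for every $k$ at every admissible time simultaneously — no openness, density, or ``transport through singular times outside the balls'' is required.

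A second gap is your assumption that the balls $B_k$ can be taken pairwise disjoint. This cannot be arranged in general while still covering $S^\ins_\neck(T)$: the paper explicitly notes (just after the statement of Theorem \ref{thm_exist_break_point}) that the inward neck singular set at time $T$ may be a whole circle, and a connected set cannot be covered by finitely many pairwise disjoint open balls. Consequently, the ``perform surgeries independently in each $B_k$'' step does not go through as stated — a cap-off inside $B_1$ may re-introduce intersections with an overlapping $B_2$, and there is no obvious termination of the iteration. The paper avoids this entirely by fixing a near-minimal-length representative $\gamma$ of $c_0(T_1)$ at a single good time $T_1$ and showing it \emph{already} avoids $\bigcup_i \tfrac12 B_i$: if $\gamma$ entered some $\tfrac12 B_i$, the vanishing linking number would let one replace a long arc of $\gamma$ through $B_i$ with a short arc on $\partial B_i$, producing a representative shorter than the infimum — a contradiction. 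That argument only ever surgers in one ball at a time, so overlap is irrelevant. In short: the two genuine gaps in your proposal (combining times, and disjointness of the cover) are exactly what the paper's Lemma \ref{lem_linking_number_zero} and the minimal-length trick are there to circumvent.
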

	
	Note that such $x$ may be non-unique: Consider a flow that is a thin torus collapsing into a closed curve consisting entirely of neck singularities.
	
	\begin{proof}
		
		We prove the inward case as the outward case is analogous.
		We will prove by contradiction. Suppose that the theorem is false, meaning:
		
		{\bf Assumption ($\star$):} For every inward neck singularity $(x,T)$, there is a neighborhood $U_x$ of $x$ such that it is not true that ``for every time $t<T$ close enough to $T$, every element of $c_0(t)$ intersects $U_x$".

		Applying 
		Theorem \ref{thm_topological_canonical} to each inward neck singularity $(x,T)$, with a constant $\delta_0(x)>0$ such that $B_{\delta_0(x)}(x)\subset U_x$ and an $R>\max\{R_0,100\}$, we obtain constants $\delta(x),\bar\delta(x)>0$ and a set of full measure $J(x)\subset[T-\bar\delta(x),T+\bar\delta(x)]$ satisfying the properties of Theorem \ref{thm_topological_canonical}.

		Since $S^\ins_\neck(T)$ is compact by Lemma \ref{lem_S_in_S_out_compact}, there exist $x_1,...,x_n\in S^\ins_\neck(T)$ such that $$ B_{\delta(x_1)/2}(x_1),...,B_{\delta(x_n)/2}(x_n)$$ cover $S^\ins_\neck(T)$. For simplicity, we  denote those balls by $\frac 12 B_1,...,\frac 12 B_n$, while
		$$B_1:=B_{\delta(x_1)}(x_1),...,B_n:=B_{\delta(x_n)}(x_n)$$
		Since $c_0$ terminates at time $T$, we know that $c_0$ breaks in $(S^\ins_\neck(T),T)$ by Proposition \ref{prop_break_in_sing_set}. Thus, by definition, there exists a time $T_1$ with $\max_i T-\bar \delta(x_i)<T_1<T$ such that for each $t\in [T_1,T)$, every element of $c_0(t)$ intersects $\cup_i\frac 12 B_i$. We can assume $T_1\in \cap_i J(x_i)$ so that $M(T_1)$ is smooth and intersects each $\partial B_i$ transversely by Theorem \ref{thm_topological_canonical} (\ref{thm_item_transverse}).

		\begin{lem}\label{lem_linking_number_zero}
			Let $D$ be a connected component of $K_\ins(T_1)\cap \partial B_i$ (of which there are at most two according to Theorem \ref{thm_topological_canonical} (\ref{thm_item_at_most_two_convex_disks})), and $\gamma\in c_0(T_1)$. Then, it follows that the linking number $\link(\gamma,\partial D)=0$.
		\end{lem}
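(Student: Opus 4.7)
The strategy is to use Assumption $(\star)$ to produce a later-time representative of the descent that avoids $B_i$, and then transfer the linking-number computation for $\gamma$ to that representative via a spacetime cobordism.

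By Assumption $(\star)$ applied at the singularity $x_i$, there exist a sequence $t_j \uparrow T$ and loops $\gamma_j \in c_0(t_j)$ with $\gamma_j \cap U_{x_i} = \emptyset$; since $B_i \subset U_{x_i}$ by our choice of the $\delta_0(x_i)$, this gives $\gamma_j \cap B_i = \emptyset$. Picking $j$ large enough that $T_1 < t_j < T$, Proposition \ref{prop_unique_clss_inbetween} applied at $t = T_1$ combined with the uniqueness in Proposition \ref{prop_homology_unique} yields $c_0(T_1) \succ c_0(t_j)$. Hence there is a $2$-chain $\Gamma$ in $W[T_1,t_j]$ with $\partial \Gamma = \gamma - \gamma_j$. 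Since $W_\ins[T_1,t_j]$ and $W_\out[T_1,t_j]$ are disjoint connected components of the spacetime complement while $\partial \Gamma \subset W_\ins$, we may take $\Gamma \subset W_\ins[T_1, t_j]$.

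The central geometric step is to verify that $\Gamma$ is disjoint from the worldline $\partial D \times [T_1, t_j]$. At $t = T_1$, $\partial D \subset M(T_1) \cap \partial B_i$, so $\partial D$ is disjoint from $W_\ins(T_1)$. For $t \in (T_1, t_j]$, the mean convex neighborhood property (item (\ref{thm_item_convex}) of Theorem \ref{thm_topological_canonical}) gives $K_\ins(t) \cap B_i \subset K_\ins(T_1) \setminus M(T_1)$, so $\partial D \subset M(T_1) \cap \partial B_i$ lies in $W_\out(t)$ for each such $t$, hence is again disjoint from $W_\ins(t)$. Consequently $\Gamma \subset (\R^3 \setminus \partial D) \times [T_1, t_j]$.

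Since $(\R^3 \setminus \partial D) \times [T_1, t_j]$ deformation retracts onto $\R^3 \setminus \partial D$ by projecting out the time coordinate, $\Gamma$ exhibits $\gamma$ and $\gamma_j$ as homologous in $\R^3 \setminus \partial D$; hence $\link(\gamma, \partial D) = \link(\gamma_j, \partial D)$. The disc $D \subset \partial B_i \subset B_i$ is a Seifert surface for $\partial D$, and $\gamma_j \cap D \subset \gamma_j \cap B_i = \emptyset$, so $\link(\gamma_j, \partial D) = 0$, and therefore $\link(\gamma, \partial D) = 0$. The main obstacle is precisely the spacetime disjointness of $\Gamma$ from $\partial D \times [T_1, t_j]$: this is where the mean convex neighborhood theorem is indispensable, since without it the boundary curve $\partial D$ could re-enter $W_\ins(t)$ for $t > T_1$ and link nontrivially with the interior cobordism.
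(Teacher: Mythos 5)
Your proof is correct and takes essentially the same route as the paper's; the paper phrases the argument contrapositively, assuming $\link(\gamma,\partial D)\neq 0$ and deducing that every element of $c_0(t_1)$ must meet $U_{x_i}$ for all $t_1\in[T_1,T)$ so that $c_0$ breaks at $(x_i,T)$ (contradicting $(\star)$), whereas you invoke $(\star)$ directly to produce a $B_i$-avoiding representative $\gamma_j$ and transfer $\link(\gamma_j,\partial D)=0$ back to $\gamma$. In both versions the key step is identical: the mean convex neighborhood property (Theorem~\ref{thm_topological_canonical}~(\ref{thm_item_convex})) keeps $\partial D\subset M(T_1)$ disjoint from $W_\ins(t)$ for all $t\in[T_1,t_j]$, so the spacetime cobordism lies in $\R^3\setminus\partial D$ and linking numbers are preserved.
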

		\begin{proof}
			Suppose the otherwise, that there exists some $D$ as above and $\gamma\in c_0(t_0)$ such that  $\link(\gamma,\partial D)\ne 0$. Now, pick any $t_1\in [T_1,T)$ and $\gamma_1\in c_0(t_1)$. By definition, $\gamma_1$ is homologous to $\gamma$ within $W_{\ins}[T_1,t_1]$. Thus, $\gamma_1$ is homologous to $\gamma$ within $\R^3\backslash \partial D$, as the mean convex neighborhood property (Theorem \ref{thm_topological_canonical} (\ref{thm_item_convex})) implies that $\partial D\subset \R^3\backslash W_\ins(t)$ for all $t\in [T_1,t_1]$. Therefore, $\link(\gamma_1,\partial D)\ne 0$, which implies that $\gamma_1$ must intersect $D$. However, since $D\subset \bar B_i\subset U_{x_i}$, this implies that for all $t_1\in [T_1,T)$, any element of $c_0(t_1)$ must intersect $U_{x_i}$. This contradicts the assumption ($\star$).
		\end{proof}
		
		Let $\epsilon_1:=\min_i\delta(x_i)/2$.
		Let $\gamma\in c_0(T_1)$  be such that
		\begin{equation}\label{eq_gamma_def}
			\length(\gamma)<\inf_{\gamma'\in c_0(T_1)}\length(\gamma')+\epsilon_1/100
		\end{equation}
		Without loss of generality, we can assume $\gamma$ intersects all $\partial B_i$ transversely.
		To finish the proof, it suffices to show that $\gamma$ avoids $\cup_i \frac 12 B_i$: This would contradict the definition of $T_1$.
		\begin{lem}
			$\gamma$ does not intersect $\cup_i \frac 12 B_i$.
		\end{lem}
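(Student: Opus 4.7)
The plan is to derive a contradiction: assuming $\gamma$ meets some $\tfrac{1}{2}B_i$, construct a strictly shorter representative of $c_0(T_1)$, violating \eqref{eq_gamma_def}.

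First I would make $\gamma$ transverse to each $\partial B_i$ by an arbitrarily small $C^0$-perturbation inside $W_\ins(T_1)$, which changes $\length(\gamma)$ negligibly. Then $\gamma \cap \partial B_i$ is a finite point set lying in $K_\ins(T_1)\cap \partial B_i$, which by items (\ref{thm_item_convex_loop})--(\ref{thm_item_at_most_two_convex_disks}) of Theorem \ref{thm_topological_canonical} is a disjoint union of at most two convex disks $D_1, D_2 \subset \partial B_i$. By Lemma \ref{lem_linking_number_zero}, $\link(\gamma, \partial D_j) = 0$, and since each $D_j$ is itself a Seifert disk for $\partial D_j$, this equals the signed intersection count of $\gamma$ with $D_j$. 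Hence the crossings of $\gamma$ with $D_j$ pair up by sign.

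Next, for each oppositely-signed pair $(p^+, p^-)$ on $D_j$, I would perform the surgery used in the last paragraph of the proof of Proposition \ref{prop_shape_of_breaking_neck}: cut $\gamma$ at $p^{\pm}$ and reconnect the cut ends through a short arc on $D_j$, pushed slightly into $W_\ins(T_1)$. This alters $\gamma$ by the boundary of a thin $2$-strip in $W_\ins(T_1)$ and so preserves the class $c_0(T_1)$. After iterating over all pairs on every $D_j^{(i)}$, I obtain $\gamma' \in c_0(T_1)$ disjoint from $K_\ins(T_1)\cap \partial B_i$ for every $i$, so $\gamma' \cap B_i$ consists entirely of loops, each lying in a single connected component of $W_\ins(T_1)\cap B_i$. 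By Theorem \ref{thm_topological_canonical}(\ref{thm_item_each_cnn_comp_genus0}), each such component is the interior of a topological ball or a solid cylinder, hence contractible; contracting each loop to a point yields $\gamma'' \in c_0(T_1)$ with $\gamma'' \cap B_i = \emptyset$, and in particular $\gamma'' \cap \tfrac{1}{2}B_i = \emptyset$ for every $i$.

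The main obstacle, and the only nontrivial point, is the length estimate $\length(\gamma'') < \length(\gamma) - \epsilon_1/100$. Each arc of $\gamma$ that reaches $\tfrac{1}{2}B_i$ contributes at least $\delta(x_i) \geq 2\epsilon_1$ of length inside $B_i$, all of which is removed by the surgery and contraction steps. Each surgery pair adds at most $2\,\mathrm{diam}(D_j^{(i)})$ in length. By choosing $R$ in the application of Theorem \ref{thm_topological_canonical} sufficiently large, the disks $D_j^{(i)}$ are made arbitrarily small relative to $\delta(x_i)$, so each individual surgery is cheap. The subtle point is to bound the total number of surgery pairs uniformly; I would handle this by exploiting the near-minimality \eqref{eq_gamma_def}, which prevents $\gamma$ from oscillating across a given $D_j^{(i)}$ too rapidly: any two consecutive crossings along $\gamma$ must be separated by an arc of length at least comparable to $\mathrm{diam}(D_j^{(i)})$, for otherwise a purely local shortening of $\gamma$ would save more than $\epsilon_1/100$, contradicting \eqref{eq_gamma_def}. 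This gives a crossing count of order $\length(\gamma)/\mathrm{diam}(D_j^{(i)})$, and after further shrinking $\mathrm{diam}(D_j^{(i)})$ via $R$, the total surgery cost becomes a negligible fraction of $\delta(x_i)$. The resulting $\gamma''$ contradicts \eqref{eq_gamma_def}, completing the proof of the lemma and, with it, of Theorem \ref{thm_exist_break_point}.
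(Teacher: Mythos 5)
Your overall shape (surgery through the disks $D_j \subset \partial B_i$, pairing crossings by sign via Lemma \ref{lem_linking_number_zero}, and using Theorem \ref{thm_topological_canonical}(\ref{thm_item_each_cnn_comp_genus0}) to see that the surgeries stay in $c_0(T_1)$) matches the paper's, but you aim for far more than the paper does, and that extra ambition is where a genuine gap appears. You try to produce a representative $\gamma''$ that is \emph{completely} disjoint from every $B_i$, which forces you to perform a surgery at \emph{every} crossing of $\gamma$ with every $D_j$, and hence to bound the total number of crossings. Your proposed bound — that near-minimality \eqref{eq_gamma_def} forces consecutive crossings along $\gamma$ to be separated by arcs of length comparable to $\mathrm{diam}(D_j)$ — does not follow: two consecutive crossings could be joined by a near-geodesic arc of arbitrarily small length, in which case the ``local shortening'' saves essentially nothing, so near-minimality puts no constraint on how densely $\gamma$ can cross $D_j$. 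Worse, even granting your claimed crossing count of order $\length(\gamma)/\mathrm{diam}(D_j)$, the total surgery cost would be (crossing count)$\times O(\mathrm{diam}(D_j)) = O(\length(\gamma))$, which does not shrink as you increase $R$; the two factors cancel.

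The paper's proof sidesteps all of this by performing at most one surgery (case (1)) or one matched pair of surgeries (case (2)). It first discards loop components of $\gamma\cap B_i$, then picks a single arc component $\beta$ that genuinely reaches $\frac{1}{2}B_i$. That one arc alone has $\length(\beta)\geq \delta(x_i)\geq 2\epsilon_1$, while reconnecting its endpoints along $D_1$ (and, in case (2), likewise reconnecting a paired arc $\hat\beta$ along $D_2$, whose existence uses $\link(\gamma,\partial D_j)=0$) costs at most $2\cdot\mathrm{diam}(D_j)\leq \delta(x_i)/25$. So the \emph{net} saving from this single operation already exceeds $\epsilon_1/100$ and contradicts \eqref{eq_gamma_def} — no accounting over all crossings, no attempt to clear $\gamma$ out of $B_i$, and no need to control short arcs that never reach $\frac{1}{2}B_i$. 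If you restrict your argument to one such deep arc $\beta$ you recover the paper's proof; as written, the crossing-count step is a gap.
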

		\begin{proof}
			We prove by contradiction. Suppose that $\gamma$ intersects some $\frac 12 B_i$. We will produce an element of $c_0(T_1)$ whose length is too small.
			
			Without loss of generality, we can assume that no connected component of $\gamma\cap B_i$ is a closed loop. This is because we could just remove all such loops from $\gamma$, and the resulting curve is still in $c_0(T_1)$ by Theorem \ref{thm_topological_canonical} (\ref{thm_item_each_cnn_comp_genus0}).
			Hence, letting $\beta$ be a connected component of $\gamma\cap B_i$, we can assume that $\beta$ is a line segment. 
			
			Now, by Theorem \ref{thm_topological_canonical} (\ref{thm_item_at_most_two_convex_disks}) and our choice that $T_1\in\cap_i J(x_i)$, $W_{\ins}(T_1)\cap \partial B_i$ consists of at most two disks.  There are two cases: Either (1) $\beta$ starts and ends on the same disk, say $D_1$, or (2) $\beta$ starts and ends on different disks, $D_1$ and $D_2$. We will show that both are impossible.
			
			For case (1), since $\beta$ intersects $\frac 12 B_i$, whose  distance to $\partial B_i$ is  $\delta(x_i)/2$, we know that $\length(\beta)$ is at least $\delta(x_i)$. On the other hand, note that by Theorem \ref{thm_topological_canonical} (\ref{thm_item_is_colid_cylinder}), (\ref{thm_item_convex}), and (\ref{thm_item_convex_loop}), $D_1$ is a convex disc on $\partial B_i$ with diameter less than $\delta(x_i)/50$ (recall $R>100$). Thus,
			we can join the end points of $\beta$, from $\beta(1)$ to $\beta(0)$, by a segment $\beta_1$ on $D_1$ of length less than $\delta(x_i)/50$: See Figure \ref{fig::shorten1}. Then, we consider the new loop $\gamma-\beta-\beta'$, which replaces $\beta\subset\gamma$ with $\beta'$. This loop lies in $c_0(T_1)$, because $\beta+\beta'$ bounds a disc in $W_\ins (T_1)\cap \bar B_i$ by Theorem \ref{thm_topological_canonical} (\ref{thm_item_each_cnn_comp_genus0}).  
			
			\begin{figure}[h]
				\centering
				\makebox[\textwidth][c]{\includegraphics[width=3.0in]{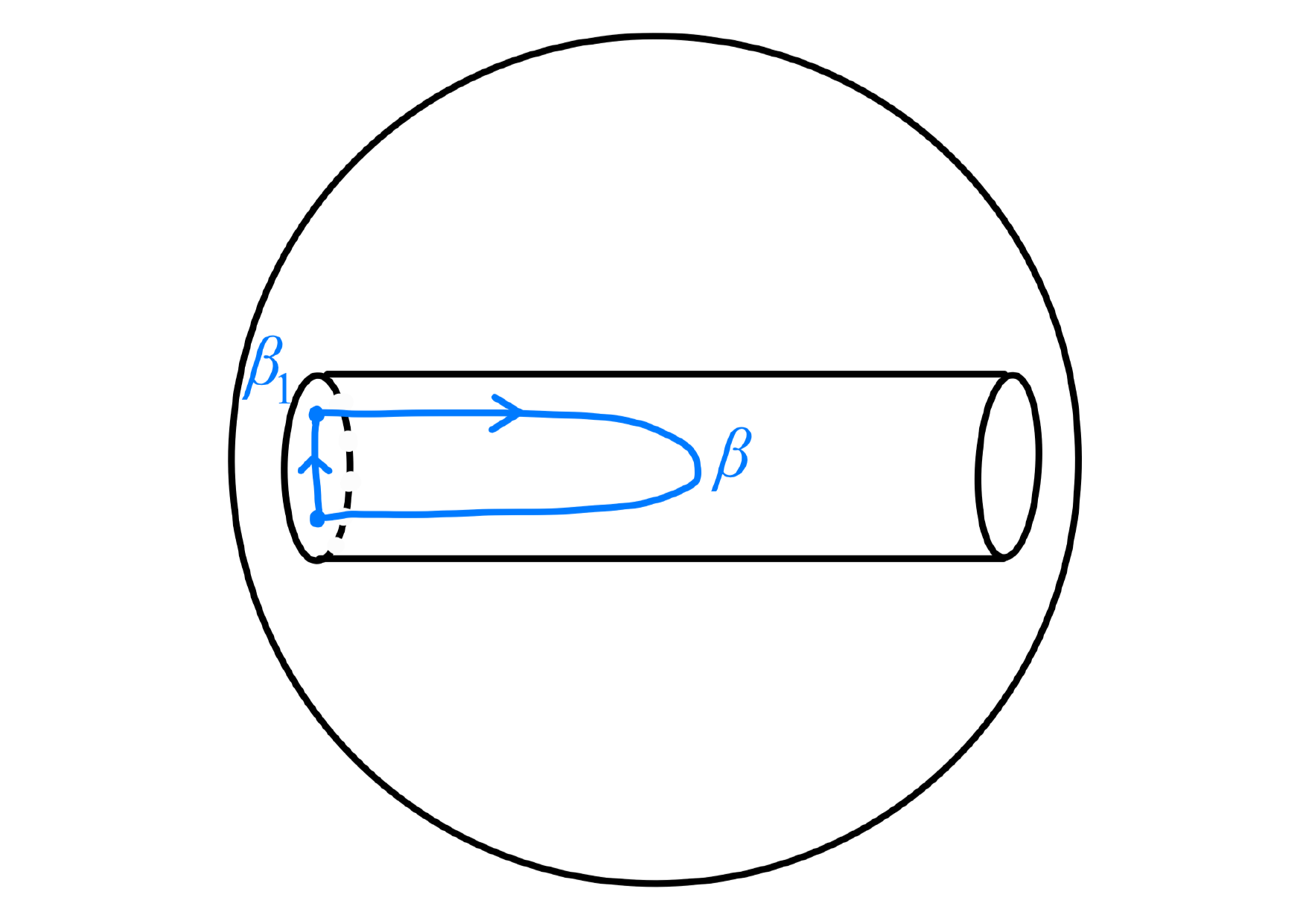}}
				\caption{}
				\label{fig::shorten1}
			\end{figure}
			
			Moreover, this new loop is impossibly short:
			\begin{align*}
				\length(\gamma-\beta-\beta')&\leq \length(\gamma)-\delta(x_i)+ \delta(x_i)/50\\
				&<\length(\gamma)-\delta(x_i)/2\\
				&\leq \length(\gamma)-\epsilon_1\\
				&<\inf_{\gamma'\in c_0(T_1)}\length(\gamma'),
			\end{align*}
			in which the last inequality is from the definition of $\gamma$. Thus, 
			a contradiction arises, and case (1) is impossible.

			For case (2), suppose the starting point $\beta(0)$ is in $D_1$ and the ending point $\beta(1)$ is in $ D_2$. We claim that there is another connected component $\hat\beta$ of $\gamma\cap B_i$ such that  starting point  $\hat\beta(0)$ is in $ D_2$ and ending point $\hat\beta(1)$ is in $\in D_1$. This claim follows immediately from:
			\begin{itemize}
				\item By Theorem \ref{thm_topological_canonical} (\ref{thm_item_each_cnn_comp_genus0}), $M(T_1)\cap \partial B_i$ is a cylinder.
				\item By 
				Lemma \ref{lem_linking_number_zero}, $\link(\gamma,\partial D_1)=\link(\gamma,\partial D_2)=0$.
				\item Case (1) was proven impossible.
			\end{itemize}
			Finally, let $\beta_1$ be a segment on $D_1$ connecting $\hat\beta(1)$ to $\beta(0)$, and $\beta_2$ be a segment on $D_2$ connecting   $\hat\beta(0)$ to $\beta(1)$ (see Figure \ref{fig::shorten2}). As in case (1), we can guarantee  $\length(\beta_1),\length(\beta_2)<\delta(x_i)/50$. Hence,  we  consider the new loop $\gamma-\beta-\hat\beta-\beta_1-\beta_2$, which replaces $\beta+\hat\beta\subset \gamma$ with $-\beta_1-\beta_2$. This new loop lies in $c_0(T_1)$, because $\beta+\hat\beta+\beta_1+\beta_2$ bounds a disc in $W_\ins (T_1)\cap\bar B_i$ by Theorem \ref{thm_topological_canonical} (\ref{thm_item_each_cnn_comp_genus0}). Moreover, as in case (1), we can show that 
			$$\length(\gamma-\beta-\hat\beta-\beta_1-\beta_2)<\inf_{\gamma'\in c_0(T_1)}\length(\gamma'),$$
			which is a contradiction. Therefore, case (2) is also impossible. This leads to a contradiction.
			
			\begin{figure}[h!]
				\centering
				\makebox[\textwidth][c]{\includegraphics[width=2.5in]{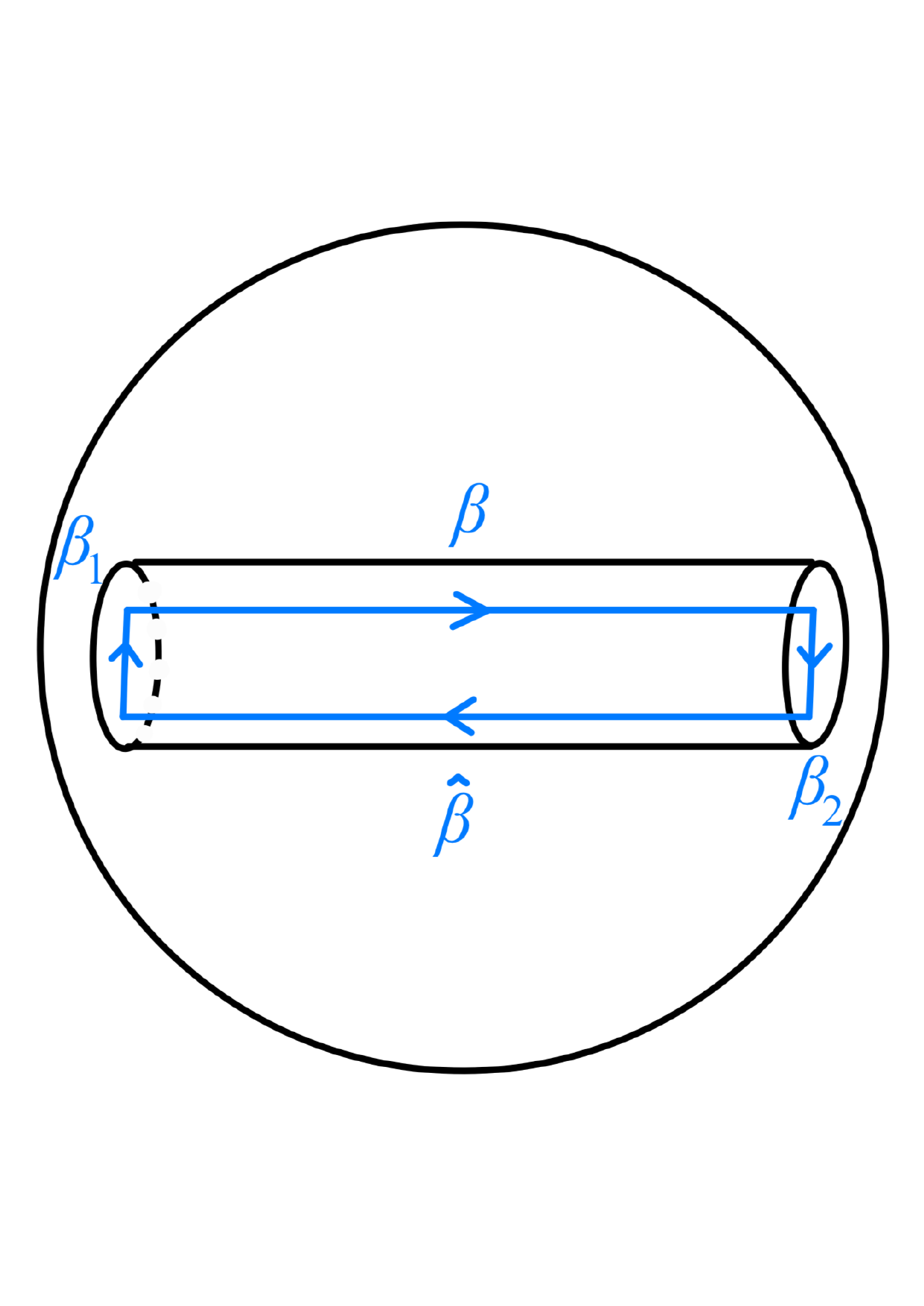}}
				\caption{}
				\label{fig::shorten2}
			\end{figure}

		\end{proof}
		
		This finishes the proof of Theorem \ref{thm_exist_break_point}.
	\end{proof}
	
	\subsection{MCF through cylindrical and spherical singularities from torus}\label{sect_MCF_torus} 
	In \S \ref{sect_MCF_torus} , we will focus on  2-dimensional MCF $\cM=\{M(t)\}_{t\geq 0}$ through  cylindrical and spherical singularities in $\R^3$, where $M(0)$ is a smooth {\it torus}.
	The main goal of \S \ref{sect_MCF_torus} is to prove the following.  
	\begin{thm}\label{thm_terminate_time_change_genus} The setting is as follows.
		\begin{itemize}
			\item Let $\{M(t)\}_{t\geq 0}$ be a MCF through cylindrical and spherical singularities with $M(0)$ a smooth torus in $\R^3$.
			\item Let $a_0$ be a generator of $ H_1(W_\ins(0))\cong\Z$, and   $b_0$ be a generator of $H_1(W_\out(0))\cong \Z$.
			\item Let 
			$T=\min\{\ft(a_0),\ft(b_0)\}.$
		\end{itemize}
		Then $T<\infty$, and $\genus(M(t))=1$ for a.e. $t<T$, while $\genus(M(t))=0$ or $M(t)$ is empty for a.e. $t>T$.
	\end{thm}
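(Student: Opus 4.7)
The plan is to establish the three conclusions in the order (i) $\genus(M(t))=1$ for a.e. $t<T$, (ii) $T<\infty$, and (iii) $\genus(M(t))=0$ or $M(t)=\emptyset$ for a.e. $t>T$.

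For (i), the main tool is a linking number argument. I would choose loop representatives $\alpha_0\in a_0$ in $W_\ins(0)$ and $\beta_0\in b_0$ in $W_\out(0)$ with $\link(\alpha_0,\beta_0)=1$ (they are the generators of the two $\Z$-summands of $H_1(\R^3\setminus M(0))$ and hence link once). The key observation is that $W_\ins[0,t]$ and $W_\out[0,t]$ are disjoint open subsets of $\R^3\times[0,t]$ whose union is $W[0,t]$, as follows from Lemma \ref{lem_relate_Kin_func} applied to a level set function; hence any singular chain in $W[0,t]$ splits simplex-by-simplex as $\Gamma_\ins+\Gamma_\out$ into the two pieces. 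Applying this splitting to a chain witnessing $a_0\succ a_0(t)$ and invoking the uniqueness of descent (Proposition \ref{prop_homology_unique}), one obtains a representative $\alpha_t\in a_0(t)$ with $\alpha_t\subset W_\ins(t)$, and similarly $\beta_t\subset W_\out(t)$. Slicing these chains at a.e. intermediate time yields continuous families of loops that remain in disjoint spacetime regions, so the Gauss linking number is preserved: $\link(\alpha_t,\beta_t)=1$. If $M(t)$ had genus $0$ (it is at most $1$ by the monotonicity of $\rank H_1(W(t))$ from \cite{White95_WSF_Top}), then $M(t)$ would be a disjoint union of smooth spheres, $H_1(W_\ins(t))=0$, and $\alpha_t$ would bound a disk $D\subset W_\ins(t)$ disjoint from $\beta_t$, forcing $\link(\alpha_t,\beta_t)=D\cdot\beta_t=0$, a contradiction. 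Hence $\genus(M(t))=1$.

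Claim (ii) follows quickly: compactness of $M(0)$ forces extinction at some finite $T_\text{ext}$, and near extinction, by the assumption that the flow has only cylindrical and spherical singularities, any surviving component is a small shrinking smooth sphere, so for some smooth $s<T_\text{ext}$ the surface $M(s)$ is non-empty of genus zero. The contrapositive of (i) then gives $T\leq s<T_\text{ext}<\infty$.

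For (iii), without loss of generality assume $T=\ft(a_0)\leq\ft(b_0)$; by Theorem \ref{thm_exist_break_point}, $a_0$ breaks at some inward neck singularity $(x,T)$. I then apply Proposition \ref{prop_shape_of_breaking_neck}: for $t$ in a dense subset of $(T-\bar\delta,T)$, the set $K_\ins(t)\cap B_\delta(x)$ is a solid cylinder whose two cap disks $D_1,D_2\subset\partial B_\delta(x)$ both have non-zero algebraic intersection number with any representative of $a_0(t)$. This forces the cylindrical boundary loop on $M(t)\cap\partial B_\delta(x)$ to be \emph{essential} on the torus $M(t)$: otherwise it would bound a disk on $M(t)$, and $a_0(t)$ could be homotoped off the cylinder, yielding intersection number zero. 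Consequently, cutting $M(t)$ along this essential loop produces a connected cylinder, and by Theorem \ref{thm_topological_canonical}(\ref{thm_item_each_cnn_comp_genus0}) the post-pinch surface in $B_\delta(x)$ consists of two disks; gluing the outside cylinder to those caps yields a sphere. Thus $\genus(M(t))=0$ for $t$ just after $T$, and by White's monotonicity this persists (or $M(t)$ becomes empty) for a.e. $t>T$.

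The main difficulty is the essential-loop deduction in (iii): one must rule out the possibility that the pinch at $(x,T)$ is a bump pinching off along an inessential loop, which would leave a torus component alive and therefore could not terminate $a_0$. Proposition \ref{prop_shape_of_breaking_neck}(\ref{prop_item_non_zero_intersection_number})'s non-vanishing intersection number is exactly the ingredient that excludes this scenario. A secondary technical point, underlying the chain-splitting in (i), is the separateness of $W_\ins[0,t]$ and $W_\out[0,t]$ as open subsets of the spacetime complement of $\mathcal{M}$.
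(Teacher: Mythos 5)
Your proposal takes a genuinely different and more geometric route than the paper, which proceeds through Propositions \ref{prop_will_terminate}--\ref{prop_only_one_terminate}: finiteness of $T$ by Alexander duality, the drop in genus after $T$ deduced by tracking $b_0(t)$ (an Angenent torus avoidance argument shows $b_0(t)$ exists and is trivial for $t>T$, while Proposition \ref{prop_if_exists_then_generator} shows that if $M(t)$ were a torus then $b_0(t)$ would have to generate $H_1(W_\out(t))$), and genus one before $T$ from a local linking argument near the breaking neck.

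Your parts (i) and (ii) are sound. The observation that $W_\ins[0,t]$ and $W_\out[0,t]$ are disjoint open sets whose union is $W[0,t]$, so that chains split simplex-by-simplex and $a_0(t)$ therefore lands in $H_1(W_\ins(t))$ and $b_0(t)$ in $H_1(W_\out(t))$, is correct and cleanly obtains both representatives. The phrase ``continuous families of loops'' glosses over the fact that slices of a polyhedral $2$-chain are $1$-cycles rather than loops and may jump in $t$, but the conclusion is right: the cleanest rigorization is to run the Alexander duality argument of Proposition \ref{prop_will_terminate} at each $t<T$ rather than only at extinction time, which gives $\genus(M(t))=1$ directly for a.e.\ $t<T$; part (ii) then follows from extinction. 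This is arguably tidier than the paper's route through Proposition \ref{prop_T_g_geq_T}.

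Part (iii) has a genuine gap. Your essential-loop deduction from Proposition \ref{prop_shape_of_breaking_neck}(\ref{prop_item_non_zero_intersection_number}) is a nice and correct insight: if $\partial D_1$ bounded a disk $E\subset M(t)$, then $D_1+E$ would be a $2$-cycle in $\R^3$, so $\gamma\cdot D_1=-\gamma\cdot E=0$ for every $\gamma\in a_0(t)$ since $\gamma$ is disjoint from $M(t)$. But the step ``gluing the outside cylinder to those caps yields a sphere, thus $\genus(M(t))=0$ just after $T$'' presupposes two facts that are not established. First, that for $t'$ slightly greater than $T$ the set $K_\ins(t')\cap B_\delta(x)$ really does fall into the capped (non-cylinder) cases of Theorem \ref{thm_topological_canonical}(\ref{thm_item_each_cnn_comp_genus0}) --- i.e.\ that the neck has actually pinched, rather than merely thinned; this is precisely what the Angenent-torus avoidance argument of Proposition \ref{prop_only_one_terminate} is for. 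Second, that $M(t')\setminus B_\delta(x)$ is isotopic to the ``outside cylinder'' $M(t)\setminus B_\delta(x)$, i.e.\ that a global surgery picture holds across time $T$, despite the singular set at time $T$ being potentially a one-dimensional set with other inward or outward necks scattered around it. The paper never proves such a surgery picture; instead it argues entirely through the homology classes (Propositions \ref{prop_if_exists_then_generator}, \ref{prop_cannot_terminate_same_time}, \ref{prop_only_one_terminate}), for which descent and triviality are globally well-defined. To close (iii) you would either need to justify the global surgery picture, or replace the last step with the paper's homological argument.
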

	Throughout \S \ref{sect_MCF_torus}, we will retain the notations in this theorem. 
	
	Let us first sketch the proof.  By \cite{ColdingMinicozzi16_SingularSet}, $M(t)$ is smooth for a.e. time. And by \cite{White95_WSF_Top}, $\genus(M(t))$, when well-defined, is non-increasing in $t$. Thus, there exists some time $T_g$ such that $\genus(M(t))=1$ for a.e. $t<T_g$, while $\genus(M(t))=0$ or $M(t)$ is empty for a.e. $t>T_g$. Our goal is to show $T=T_g$. 
	
	The proof consists of proving the following six claims one-by-one:
	\begin{itemize}
		\item $T<\infty$.
		\item Let $t\geq 0$. If $M(t)$ is a smooth torus and  $a_0(t)$ exists, then $a_0(t)$ generates $H_1(W_\ins(t))$. And the case for $b_0$ is analogous.\
		\item $T_g\geq T$.
		\item $\ft(a_0)\ne \ft(b_0)$.
		\item If $\ft(a_0)< \ft(b_0)$, then $b_0(t)$ is trivial for each $t> \ft(a_0)$. And if $\ft(b_0)< \ft(a_0)$, then $a_0(t)$ is trivial for each $t> \ft(b_0)$.
		\item $T_g\leq T$.
	\end{itemize}
	
	We now begin the proof.
	\begin{prop}\label{prop_will_terminate}
		$T<\infty$.
	\end{prop}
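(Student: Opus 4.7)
The plan is to combine finite-time extinction with a spacetime linking number argument. Since $M(0)$ is a closed surface, the avoidance principle (\cite[Theorem 7.3]{EvansSpruck91}) applied to any sufficiently large enclosing round sphere (which shrinks to a point in finite time) forces extinction: there exists $T_e<\infty$ with $M(t)=\emptyset$ for all $t>T_e$, and hence $W(t)=\R^3$ and $H_1(W(t))=0$ beyond $T_e$.

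Arguing by contradiction, suppose $\ft(a_0)=\ft(b_0)=\infty$ and fix any $t_0>T_e$. Then both $a_0(t_0)$ and $b_0(t_0)$ exist in $H_1(W(t_0))=0$, so both equal the trivial class. Pick representatives $\alpha_0\in a_0$ inside $W_\ins(0)$ and $\beta_0\in b_0$ inside $W_\out(0)$ with $\link(\alpha_0,\beta_0)=\pm 1$ in $\R^3$ (the core of the solid torus $W_\ins(0)$ links once with a loop through the hole in $W_\out(0)$). By Definition \ref{defn_order} together with the fact that every $1$-cycle in $W(t_0)=\R^3$ bounds a $2$-chain inside $\R^3\times\{t_0\}\subset W[0,t_0]$, one produces $2$-chains $A,B\subset W[0,t_0]$ with $\partial A=\alpha_0$ and $\partial B=\beta_0$.

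The central geometric observation is that $W[0,t_0]$ is the disjoint union of the open sets $W_\ins[0,t_0]$ and $W_\out[0,t_0]$; indeed, $\cK_\ins$ and $\cK_\out$ are closed spacetime sets with slicewise union $\R^3$ and slicewise intersection $M(t)$, so $W_\ins(t)\cap W_\out(t)=\emptyset$ at every time. Decomposing the simplicial supports of $A$ and $B$ according to which component they lie in, and using $\alpha_0\subset W_\ins$ and $\beta_0\subset W_\out$, I extract $A_\ins\subset W_\ins[0,t_0]$ with $\partial A_\ins=\alpha_0$ and $B_\out\subset W_\out[0,t_0]$ with $\partial B_\out=\beta_0$. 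Since $A_\ins$ and $B_\out$ lie in disjoint open subsets of spacetime, $A_\ins\cap B_\out=\emptyset$, and in particular their $4$-dimensional intersection number vanishes.

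On the other hand, the ambient spacetime $\R^3\times[0,t_0]$ is contractible, so the intersection number $A_\ins\cdot B_\out$ depends only on the boundary data $\alpha_0,\beta_0$. A direct computation with explicit representatives---a spanning $2$-chain $S_\alpha$ of $\alpha_0$ placed in $\R^3\times\{0\}$, paired against a model $2$-chain for $\beta_0$ given by the vertical cylinder $\beta_0\times[0,\epsilon]$ capped at time $\epsilon$ by a spanning $2$-chain of $\beta_0$---yields $A_\ins\cdot B_\out=\pm\link(\alpha_0,\beta_0)=\pm 1$, contradicting the previous paragraph. I expect the main technical issue to be making this intersection pairing rigorous for singular $2$-chains in spacetime; this is handled by transverse simplicial approximation within the fixed relative homology class, using the contractibility of the ambient $4$-manifold to ensure independence from the choices.
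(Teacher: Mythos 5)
Your proof is built on the same geometric mechanism as the paper's: once you observe that $W_\ins[0,t_0]$ and $W_\out[0,t_0]$ are \emph{disjoint} open spacetime sets and decompose the chains accordingly, the bounding chain of $\alpha_0$ lies in $W_\ins$ while that of $\beta_0$ lies in $W_\out$, and this ought to contradict $\link(\alpha_0,\beta_0)=\pm1$. The disjointness and the decomposition of $A$ into $A_\ins+A_\out$ with $\partial A_\ins=\alpha_0$ (using that the two pieces are separated open sets, so boundary terms cannot cancel across them) are both correct and, indeed, make explicit a step the paper only gestures at when it asserts $A\subset W_\ins[0,T]$ directly.

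However, the step where you convert disjointness into a contradiction has a genuine gap. You assert that since $\R^3\times[0,t_0]$ is contractible, the intersection number $A_\ins\cdot B_\out$ of the two relative $2$-chains ``depends only on the boundary data $\alpha_0,\beta_0$'' and therefore equals the model value $\pm\link(\alpha_0,\beta_0)$. This is false. If $A$ and $A'$ are two $2$-chains in $\R^3\times[0,t_0]$ with $\partial A=\partial A'=\alpha_0$, then $A-A'$ is a closed $2$-cycle; contractibility lets you write $A-A'=\partial\Theta$, but the resulting identity is $(A-A')\cdot B=\pm\,\Theta\cdot\beta_0$, and there is no reason for $\Theta$ to avoid $\beta_0$. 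What actually controls the ambiguity is not $H_2(\R^3\times[0,t_0])=0$ but $H_2\bigl((\R^3\times[0,t_0])\setminus\beta_0\bigr)\cong\Z$, and this is nonzero. Concretely, adding to $A$ a small $2$-sphere in the interior of $\R^3\times[0,t_0]$ that links $\beta_0$ once (but is disjoint from it) changes $A\cdot B$ by $\pm 1$. In the same vein, your model chain $S_\alpha$ (a Seifert surface for $\alpha_0$) necessarily crosses the torus $M(0)$, hence does \emph{not} lie in $W_\ins$, so the model computation cannot be transported to $A_\ins$ by a homotopy staying inside $W_\ins$; the two chains may well differ by exactly such a sphere linking $\beta_0$. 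So the equality $A_\ins\cdot B_\out=\pm1$ is not justified, and your contradiction collapses.

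The paper avoids this by not working with relative chains at all: it reflects $B$ across $\R^3\times\{0\}$ to obtain a \emph{closed} $2$-cycle $\tilde B=B\cup\hat B$ in $\R^4$ which is disjoint from $A$ (by the same disjointness of $W_\ins$ and $W_\out$, plus the time-separation of $A$ and $\hat B$ away from $t=0$ and disjointness of $\alpha_0,\beta_0$ at $t=0$). Then $\alpha_0=\partial A$ is null-homologous in $\R^4\setminus\tilde B$, while Alexander duality gives $H_1(\R^4\setminus\tilde B)\cong H^2(\tilde B)\cong\Z$ and the condition $\link(\alpha_0,\beta_0)=\pm1$ shows $[\alpha_0]$ generates this $\Z$ --- a contradiction. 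This reflection trick is precisely what replaces your ill-defined relative intersection number by a well-defined pairing against a closed cycle, and is the piece your argument is missing.
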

	\begin{proof}
		Suppose otherwise, i.e. $a_0$ and $b_0$ both never terminate. Since $M(0)$ is compact, eventually $K_\out(t)=\R^3$. So $a_0(T)$ and $b_0(T)$ both become trivial for some large $T>0$. As a result, if we pick some loops $\alpha_0\in a_0$ and $\beta_0\in b_0$, then there exist 2-chain  $A\subset W_\ins [0,T]$ and $B\subset W_\out[0,T]$ such that $\partial A=\alpha_0$ and $\partial B=\beta_0$.
		
		Now, denote by $\hat B\subset \R^3\x[-T,0]$ the reflection of  $B$ across $\R^3\x\{0\}$. Let $\tilde B=B\cup\hat B$, which can be viewed as a {\it closed} 2-chain in $\R^4$.
		Then we view $A\subset \R^4\backslash\tilde B$.
		Thus, to derive a contradiction, it suffices to show that $\alpha_0$ is homologically non-trivial in $\R^4\backslash \tilde B$. 
		
		Without loss of generality, we can assume $\tilde B$ is connected by discarding all those connected components that do not contain $\beta_0$. By Alexander duality, $$H_1(\R^4\backslash \tilde B)\cong H^2(\tilde B)\cong \Z.$$
		One can check that $\alpha_0\subset \R^4\backslash \tilde B $ actually generates  $\Z$ as the linking number $\link(a_0,b_0)=1$. This shows $\alpha_0$ is homologically non-trivial in $\R^4\backslash \tilde B$, contradicting the existence of $A$.
	\end{proof}
	
	\begin{rmk}\label{rmk_any_genus}
		The above proof works also in the case when $M(0)$ is a closed surface of any genus with $a_0\in H_1(W_\ins(0))$ and $b_0\in H_1(W_\out(0))$ linked, and the flow $\{M(t)\}_{t\geq 0}$ is a general level set flow (whose singularities are not necessarily cylindrical or spherical).
	\end{rmk}

	\begin{prop}\label{prop_if_exists_then_generator} Let $t\geq 0$. If $M(t)$ is a smooth torus and  $a_0(t)$ exists, then $a_0(t)$ generates $H_1(W_\ins(t))$. And the case for $b_0$ is analogous.
	\end{prop}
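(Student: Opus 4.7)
The plan is to combine uniqueness of homology descent (Proposition \ref{prop_homology_unique}) with the $\Z$-linearity of the $\succ$ relation. I will treat only the inward case; the outward case is parallel, with one modification noted at the end.

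Since $M(t)$ is a smooth torus in $\R^3$, the inner region $W_\ins(t)$ is homotopy-equivalent to an open solid torus, so $H_1(W_\ins(t))\cong\Z$. Pick any generator $\tilde a$ and a representative loop $\tilde\alpha\subset W_\ins(t)$. The key observation is that $W_\ins$ is monotonically decreasing in time: by the avoidance principle, $K_\out(s)\subset K_\out(t)$ for $s\leq t$, so $W_\ins(t)\subset W_\ins(s)$ for every $s\in[0,t]$. Hence the product $2$-chain $\tilde\alpha\times[0,t]$ lies entirely in $W_\ins[0,t]$, and its boundary exhibits the descent relation $\tilde a_0\succ\tilde a$, where $\tilde a_0:=[\tilde\alpha]\in H_1(W_\ins(0))$.

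Since $a_0$ generates $H_1(W_\ins(0))\cong\Z$, we have $\tilde a_0=m a_0$ for some $m\in\Z$. By $\Z$-linearity of $\succ$ (concatenating $m$ copies of the $2$-chain witnessing $a_0\succ a_0(t)$) one obtains $m a_0\succ m a_0(t)$, while simultaneously $m a_0=\tilde a_0\succ\tilde a$. Proposition \ref{prop_homology_unique} then forces $m a_0(t)=\tilde a$ as elements of $H_1(W(t))$. Writing $a_0(t)=k\tilde a$ in $H_1(W_\ins(t))$, we get $mk=1$ in $\Z$, so $k=\pm1$, i.e., $a_0(t)$ generates $H_1(W_\ins(t))$.

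The main subtlety to be handled carefully is that, a priori, $a_0(t)$ is an element of $H_1(W(t))=H_1(W_\ins(t))\oplus H_1(W_\out(t))$, so before writing $a_0(t)=k\tilde a$ I need to know the $H_1(W_\out(t))$-component vanishes; this is a short exercise in decomposing a witnessing $2$-chain $\Gamma\subset W[0,t]$ as $\Gamma^\ins+\Gamma^\out$ using that $W[0,t]=W_\ins[0,t]\sqcup W_\out[0,t]$ is a disjoint union of open sets, together with the injectivity $H_1(W_\out(t))\hookrightarrow H_1(W_\out[0,t])$ of White cited in the proof of Proposition \ref{prop_homology_unique}. For the mirror $b_0$-case, the trivial cylindrical lift fails because $W_\out$ is non-decreasing rather than non-increasing; there, Proposition \ref{prop_at_least_one} produces an element $\tilde b_0\in H_1(W(0))$ with $\tilde b_0\succ\tilde b$, and the same chain-decomposition plus White injectivity confines $\tilde b_0$ to the $H_1(W_\out(0))$ summand, after which the rest of the argument runs verbatim.
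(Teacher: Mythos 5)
The key step fails: the claim that $W_\ins$ is monotonically decreasing in time --- equivalently, that $K_\out(s)\subset K_\out(t)$ for $s\leq t$ --- is false for a general level set flow. The avoidance principle compares two \emph{disjoint} flows; it does not make the regions bounded by a single flow monotone in time. Any embedded torus has points of negative mean curvature, and near such points the surface initially moves \emph{outward}, so $K_\ins$ grows and $K_\out$ shrinks there. Hence a loop in $W_\ins(t)$ need not lie in $W_\ins(s)$ for $s<t$, and the product $2$-chain $\tilde\alpha\times[0,t]$ need not lie in $W_\ins[0,t]$. (Your parallel assertion that $W_\out$ is non-decreasing is false for the same reason, though you do not rely on it.) A second, minor slip: $W_\ins(t)$ need not be an open solid torus (a torus in $\R^3$ can bound a knotted hole, whose complement is not simply connected), though $H_1(W_\ins(t))\cong\Z$ still holds by Alexander duality / Mayer--Vietoris, which is all you use.

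The fix is one you have already written, for the $b_0$ case: use Proposition \ref{prop_at_least_one} --- whose proof invokes White's Theorem 1(ii) --- to obtain $\tilde a_0\in H_1(W(0))$ with $\tilde a_0\succ\tilde a$, and then confine $\tilde a_0$ to the $H_1(W_\ins(0))$ summand. In fact, since that proof produces an actual homotopy $S^1\times[0,1]\to W[0,t]$ and the domain is connected, the whole homotopy must land in the single component $W_\ins[0,t]$, so the chain-decomposition step is unnecessary here. With this replacement your argument coincides in substance with the paper's own proof, which uses White's Theorem 1(ii) to descend a generator to time $0$ and the injectivity $H_1(W_\ins(t))\hookrightarrow H_1(W_\ins[0,t])$ (White's Theorem 1(iii), packaged in Proposition \ref{prop_homology_unique}) to identify it with $\pm a_0(t)$. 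The rest of your reasoning (the $\Z$-linearity of $\succ$, the identity $mk=1$ forcing $k=\pm1$, and the remark that $a_0(t)$ a priori lives in $H_1(W(t))$ rather than $H_1(W_\ins(t))$) is correct and essentially parallels the paper.
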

	\begin{proof} We will just prove the case for $a_0$.
		Let $\bar a$ be a generator of $H_1(W_\ins(t))\cong \Z$. It suffices to show $\bar a=a_0(t)$ up to a sign.
		
		By definition, there exists $\alpha_0\in a_0$, $\alpha_1\in a_0(t)$ such that $\alpha_0-\alpha_1=\partial A$ for some $A\subset W[0,t]$. On the other hand, pick a loop $\bar \alpha_1\in\bar a$, then by \cite[Theorem 1 (ii)]{White95_WSF_Top}, there exists a homotopy $H$ in $W[0,T]$ joining $\bar \alpha_1$ back to some loop $\bar \alpha_0\subset W(0)$ (which means $\partial H=\bar \alpha_1-\bar \alpha_0$). So $[\bar\alpha_0]=k a_0$ for some integer $k$, and so $\bar\alpha_0-k\alpha_0=\partial A_0$ for some $A_0\subset W(0)$. If we manage to show 
		$a_0=[\bar\alpha_0]$ or $-[\bar\alpha_0]$, then by the fact that $a_0$ can only descend into one class at time $t$ (Proposition \ref{prop_homology_unique}), we would know $a_0(t)=\bar a$ or $-\bar a$, as desired. Hence, it suffices to show that $k=\pm 1$.

		Let us glue $H, A_0,$ and $kA$ together, so that we have
		$$\bar\alpha_1-k\alpha_1=\partial(H+A_0+kA).$$
		Thus, since the inclusion $H_1(W_\ins(t))\to H_1(W_\ins[0,t])$ is injective  by \cite[Theorem 1 (iii)]{White95_WSF_Top}, $\bar a=k\alpha_0(t)$ in $H_1(W_\ins(t))$. Since $\bar a$ is a generator by definition, $k=\pm 1$, as desired.
	\end{proof}
	
	\begin{prop}\label{prop_T_g_geq_T}
		$T_g\geq T$.
	\end{prop}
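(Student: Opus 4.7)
The plan is to argue by contradiction and reuse the Alexander duality construction from the proof of Proposition \ref{prop_will_terminate}. Assume for contradiction that $T_g < T$. Then by the definition of $T_g$, together with almost-everywhere time regularity \cite{ColdingMinicozzi16_SingularSet}, I can pick a smooth time $t_0 \in (T_g, T)$ at which $M(t_0)$ is either empty or a disjoint union of smoothly embedded 2-spheres.

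The first key step is to observe that in either case $H_1(W_\ins(t_0)) = H_1(W_\out(t_0)) = 0$: the complement in $\R^3$ of a disjoint union of embedded 2-spheres has trivial first homology, since each connected component is either an open 3-ball, a spherical shell, or the exterior of finitely many disjoint closed balls, and all of these have vanishing $H_1$. Since $t_0 < T \leq \min\{\ft(a_0), \ft(b_0)\}$, Proposition \ref{prop_unique_clss_inbetween} guarantees that $a_0(t_0) \in H_1(W_\ins(t_0))$ and $b_0(t_0) \in H_1(W_\out(t_0))$ both exist, and so both must equal the trivial class.

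Next I would translate these trivializations into explicit spacetime chains and rerun the linking argument. Pick loops $\alpha_0 \in a_0$ and $\beta_0 \in b_0$; using $a_0(t_0) = 0$, obtain a 2-chain $A \subset W[0, t_0]$ with $\partial A = \alpha_0$, and using the non-fattening of MCF through cylindrical and spherical singularities (so that $W_\ins$ and $W_\out$ are disjoint open sets), discard the part of $A$ lying in $W_\out$ to arrange $A \subset W_\ins[0, t_0]$. Symmetrically obtain $B \subset W_\out[0, t_0]$ with $\partial B = \beta_0$. Then I would reflect $B$ across $\R^3 \times \{0\}$, glue to form a closed 2-cycle $\tilde B \subset \R^4$, assume $\tilde B$ connected by discarding other components, and invoke Alexander duality exactly as in Proposition \ref{prop_will_terminate} to deduce $H_1(\R^4 \backslash \tilde B) \cong \Z$ with $[\alpha_0]$ a generator, using $\link(a_0, b_0) = 1$. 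Since $A \subset W_\ins[0, t_0]$ is disjoint from $\tilde B$, the existence of $A$ with $\partial A = \alpha_0$ gives the contradiction.

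The main obstacle I anticipate is really just bookkeeping: making sure that $W_\ins[0, t_0]$ and $W_\out[0, t_0]$ are genuinely disjoint subsets of spacetime so the chains can be cleanly separated, and ensuring that $\tilde B$ can be taken connected before applying Alexander duality. Both points are handled by the non-fattening recorded in \cite[Theorem 1.19]{ChoiHaslhoferHershkovitsWhite22_AncientMCF} and by the same cleanup already used in the proof of Proposition \ref{prop_will_terminate}, so no essentially new idea is required beyond applying that argument at the intermediate time $t_0$ rather than in the far future when the flow has died.
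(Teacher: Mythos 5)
Your proof is correct, but it follows a genuinely different route from the paper's. The paper argues locally near the singular time $T$: it invokes Theorem~\ref{thm_exist_break_point} to produce an inward neck singularity $(x,T)$ at which $a_0$ breaks, applies the refined shape description of Proposition~\ref{prop_shape_of_breaking_neck}, and then for a dense set of times $t$ just before $T$ exhibits an explicit linking pair — $\alpha\in a_0(t)$ inside $M(t)$ and a loop $\beta$ outside $M(t)$ obtained by pushing off a boundary circle on $\partial B$ — forcing $\genus(M(t))=1$. Your argument is instead global and purely topological: assuming $T_g<T$, pick a smooth slice $t_0\in(T_g,T)$ where $M(t_0)$ is empty or a union of spheres, note that $H_1(W_\ins(t_0))=H_1(W_\out(t_0))=0$, so the (still-extant, by Proposition~\ref{prop_unique_clss_inbetween}) classes $a_0(t_0)$ and $b_0(t_0)$ are both trivial, and then rerun the Alexander-duality linking contradiction from Proposition~\ref{prop_will_terminate} using the resulting spacetime 2-chains $A\subset W_\ins[0,t_0]$ and $B\subset W_\out[0,t_0]$. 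Your cleanup of $A$ and $B$ into the correct half of spacetime via non-fattening (disjointness of $W_\ins$ and $W_\out$, a simplex-by-simplex decomposition, and the fact that $\partial A$ lies entirely in $W_\ins$) is sound.

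What each route buys: your proof is softer and shorter — it avoids the canonical-neighborhood machinery entirely, relying only on a.e.~time regularity, the descent formalism, non-fattening, and the duality computation already carried out for Proposition~\ref{prop_will_terminate}. The paper's proof is more work here but pays off later: the same local set-up (the ball $B$, the disk $D$, the loop $\beta\subset W_\out(T_1)\cap B$) is reused essentially verbatim in the proofs of Propositions~\ref{prop_cannot_terminate_same_time} and~\ref{prop_only_one_terminate}, so the authors are amortizing the geometric construction across three statements. As a standalone proof of $T_g\geq T$, yours is cleaner.

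One small point worth making explicit when you write this up: after concluding $a_0(t_0)=0$ in $H_1(W(t_0))=H_1(W_\ins(t_0))\oplus H_1(W_\out(t_0))$, you obtain a 2-chain $C\subset W[0,t_0]$ with $\partial C=\alpha_0$; decomposing $C=C_\ins+C_\out$ simplex-wise and using that $\partial C_\ins\subset W_\ins$, $\partial C_\out\subset W_\out$ are disjoint while $\alpha_0\subset W_\ins$ forces $\partial C_\out=0$, so $A:=C_\ins$ already has $\partial A=\alpha_0$ and you can simply drop $C_\out$. This is exactly the bookkeeping you flagged, and it closes cleanly.
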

	\begin{proof}
		Let us assume $T=\ft(a_0)$, as the other case $T=\ft(b_0)$ is analogous. Recall that we have shown $T<\infty$. Since $\genus(M(t))$, if well-defined, is non-increasing in $t$, it suffices to prove that there exists $T_1<T$ such that  for a dense set of $t\in (T_1,T)$, $\genus(M(t))=1$.
		
		By Theorem \ref{thm_exist_break_point}, $T=\ft(a_0)$ implies $a_0$ breaks at some inward neck singularity $(x,T)$. Then, applying   Proposition \ref{prop_shape_of_breaking_neck} to $(x,T)$ with $\delta_0=1$ and an $R>R_0$,
		we obtain constants $\delta,\bar\delta$ and a dense set $J\subset [T-\bar\delta,T+\bar \delta]$ with $T-\bar\delta\in J$. We let $T_1=T-\bar\delta$, and  $B=B_\delta(x)$.
		
		Now, fix any $t\in (T_1,T)$, and $D$ let be one of the two connected component of $K_\ins(t)\cap \partial B$: Recall that $K_\ins(t)\cap B$ is a solid cylinder by Proposition \ref{prop_shape_of_breaking_neck}. By Proposition \ref{prop_shape_of_breaking_neck}, some element $\alpha\in a_0(t)$ has a non-zero intersection number with $D$. Now, we push $\partial D$ slightly into $K_\out(t) \cap B$ and call that loop $\beta$. Then the linking number $\link(\beta,\alpha)$ is non-zero, with $\alpha$ inside $M(t)$ and $\beta$ outside $M(t)$. Hence, $\genus(M(t))$ is non-zero, and thus has to be one, as desired.
	\end{proof}

	\begin{prop}\label{prop_cannot_terminate_same_time}
		$\ft(a_0)\ne \ft(b_0)$.
	\end{prop}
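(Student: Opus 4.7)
The plan is to argue by contradiction. Suppose $T:=\ft(a_0)=\ft(b_0)$; this common termination time is finite by Proposition \ref{prop_will_terminate}. Theorem \ref{thm_exist_break_point} then produces an inward neck singularity $(x,T)$ at which $a_0$ breaks and an outward neck singularity $(y,T)$ at which $b_0$ breaks. A single neck singularity cannot be both inward and outward (by the mean convex neighborhood theorem, as used in Lemma \ref{lem_S_in_S_out_compact}), so $x\ne y$, and I may apply Proposition \ref{prop_shape_of_breaking_neck} at both points to choose disjoint balls $B_x=B_{\delta_x}(x)$, $B_y=B_{\delta_y}(y)$ together with a dense subset $J\subset(T-\bar\delta,T)$ on which all conclusions of that proposition hold simultaneously at $x$ and $y$.

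Next, pick $t\in J$ close enough to $T$ that, by Proposition \ref{prop_T_g_geq_T}, $M(t)$ is a smooth torus. Proposition \ref{prop_if_exists_then_generator} then tells us that $a_0(t)$ and $b_0(t)$ are generators of $H_1(W_\ins(t))\cong\Z$ and $H_1(W_\out(t))\cong\Z$ respectively, and Alexander duality for the embedded torus $M(t)\subset S^3$ gives $\link(a_0(t),b_0(t))=\pm1$. Let $D_1^x$ be one of the two disks comprising $K_\ins(t)\cap\partial B_x$ from Proposition \ref{prop_shape_of_breaking_neck}(\ref{prop_item_is_cylinder}), and let $\beta_x\subset W_\out(t)\cap B_x$ be the loop obtained by pushing $\partial D_1^x$ slightly off $M(t)$ into the outside region. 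Attaching a thin collar in a neighborhood of $M(t)$ disjoint from $a_0(t)$ extends $D_1^x$ to a 2-chain with boundary $\beta_x$, so
\[
\link(\beta_x,\,a_0(t)) \;=\; a_0(t)\cdot D_1^x.
\]

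The key technical step is to refine Proposition \ref{prop_shape_of_breaking_neck}(\ref{prop_item_non_zero_intersection_number}) from ``nonzero'' to ``$\pm 1$''. Choose a meridian cross-section $E$ of the solid neck $K_\ins(t)\cap B_x$, together with an annulus $A\subset M(t)\cap B_x$ cobounding $\partial D_1^x$ and $\partial E$. Then $D_1^x-E-A$ is a 2-cycle inside the topological ball $\overline{K_\ins(t)\cap B_x}$ (Proposition \ref{prop_shape_of_breaking_neck}(\ref{prop_item_is_cylinder})), so it bounds a 3-chain in $\R^3$; this gives $a_0(t)\cdot D_1^x=a_0(t)\cdot E+a_0(t)\cdot A=a_0(t)\cdot E$, since $A\subset M(t)$ is disjoint from $a_0(t)\subset W_\ins(t)$. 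As $E$ is a meridian disk of the solid torus $K_\ins(t)$ and $a_0(t)$ is a generator of $H_1(K_\ins(t))\cong\Z$, we obtain $a_0(t)\cdot E=\pm1$, and therefore $\link(\beta_x,a_0(t))=\pm1$ as well.

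Since $\link(\,\cdot\,,a_0(t))\colon H_1(W_\out(t))\to\Z$ is an isomorphism sending $b_0(t)$ to $\pm1$, the equality $\link(\beta_x,a_0(t))=\pm1$ forces $[\beta_x]=\pm b_0(t)$. Thus $\pm\beta_x$ is a representative of $b_0(t)$ lying entirely in $B_x$, which is disjoint from $B_y$, contradicting the defining property of $b_0$ breaking at $(y,T)$ (which demands that every representative of $b_0(t)$ meet $B_y$ for $t<T$ close to $T$). I expect the main obstacle to be the intersection-number refinement $a_0(t)\cdot E=\pm1$: Proposition \ref{prop_shape_of_breaking_neck} on its own only gives non-vanishing, and one needs the local ball structure of $\overline{K_\ins(t)\cap B_x}$ together with the generator property of $a_0(t)$ to rule out nonzero multiples and thereby identify $[\beta_x]$ with $b_0(t)$ itself rather than a higher multiple.
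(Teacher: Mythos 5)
Your proposal is correct and proves the proposition, but it takes a genuinely different route from the paper's proof, and it is somewhat more elaborate than necessary.

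The paper does not argue by contradiction on the equality $\ft(a_0)=\ft(b_0)$. Instead, it assumes WLOG that $\ft(a_0)\leq\ft(b_0)$ and shows directly that $\ft(b_0)>\ft(a_0)$. To do this it applies Theorem \ref{thm_exist_break_point} and Proposition \ref{prop_shape_of_breaking_neck} \emph{only} to $a_0$, obtaining a single inward neck $(x,T)$ and the loop $\beta\subset W_\out(T_1)\cap B$ from the proof of Proposition \ref{prop_T_g_geq_T}. It then identifies $[\beta]=\pm b_0(T_1)$ exactly as you do, but closes the argument by a different mechanism: the mean convex neighborhood property (Theorem \ref{thm_topological_canonical}(\ref{thm_item_convex})) guarantees that $\beta$, lying outside a solid neck that is shrinking inward, stays in $W_\out(t)$ for all $t\in[T_1,T+\bar\delta]$, and in particular survives past $T$. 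Hence $\ft(b_0)>T=\ft(a_0)$. Your version instead invokes the breakage of $b_0$ at a second point $(y,T)$ and derives a contradiction from the disjointness of $B_x$ and $B_y$. This works, but it requires a second application of Theorem \ref{thm_exist_break_point} and Proposition \ref{prop_shape_of_breaking_neck} (and your sentence about a \emph{single} dense $J$ working simultaneously at $x$ and $y$ is slightly overclaimed and in fact unnecessary: you only ever use the conclusions of Proposition \ref{prop_shape_of_breaking_neck} at $x$, while at $y$ you only use the breakage condition with $U=B_y$ itself). The paper's route avoids this, and also avoids needing $x\neq y$ as a separate observation.

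What your write-up does add is a genuine clarification: you correctly flag that Proposition \ref{prop_shape_of_breaking_neck}(\ref{prop_item_non_zero_intersection_number}) only gives a nonzero intersection number, and you supply a solid-torus argument (a properly embedded disk in a solid torus is either boundary-parallel or a meridian disk, and only the latter can have nonzero algebraic intersection with a core generator) to upgrade this to $\pm 1$. The paper simply asserts ``from the construction of $\beta$ it is clear $\link(\beta,\alpha)=\pm 1$,'' which is implicitly the same observation but left without justification. Your intermediate curve $E$ is, however, a detour: the same argument applies directly to the disk $D_1^x$ itself, since $D_1^x$ is already a properly embedded disk in the solid torus $K_\ins(t)$ with nonzero intersection with $a_0(t)$, hence a meridian disk, hence intersection number $\pm 1$.
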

	\begin{proof}
		If $\ft(b_0)<\ft(a_0)$, we are done. So let us assume $\ft(a_0)\leq \ft(b_0)$ and aim to show $\ft(b_0)>\ft(a_0)$.
		
		Let us focus on the time $t=T_1$, with $T_1:=T-\bar\delta$, as defined in the proof of Proposition \ref{prop_T_g_geq_T}. We know $\genus(M(T_1))=1$ from before. Now, consider the loops $\alpha\in a_0(T_1)$ and $\beta\subset W_\out(T_1)\cap B$ defined in the previous proof. Then by Proposition \ref{prop_if_exists_then_generator}, $\alpha$ is a generator of $H_1(W_\ins(T_1))$, and from the construction of $\beta$ it is clear  $\link(\beta,\alpha)=\pm 1$. So $\beta$ actually generates $H_1(W_\out(T_1))$. Then by Proposition \ref{prop_if_exists_then_generator} again and the assumption $\ft(b_0)\geq \ft(a_0)$, we have $[\beta]=b_0(T_1)$, possibly after changing the orientation of $\beta$. 
		
		Finally, by the mean convex neighborhood property, $\beta\subset W_\out(T_1)\cap B$ will survive after time $T$. So $\ft(b_0)>\ft(a_0)$.	
	\end{proof}
	
	\begin{prop}\label{prop_only_one_terminate}
		If $\ft(a_0)< \ft(b_0)$, then $b_0(t)$ exists and is trivial for each $t> \ft(a_0)$. And if $\ft(b_0)< \ft(a_0)$, then $a_0(t)$ exists and is trivial for each $t> \ft(b_0)$.
	\end{prop}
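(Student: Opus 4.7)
The plan is to handle the case $\ft(a_0)<\ft(b_0)$; the reverse case is symmetric. Set $T_a:=\ft(a_0)$. By Theorem \ref{thm_exist_break_point}, $a_0$ breaks at an inward neck singularity $(x,T_a)$ with cylindrical tangent flow along an axis $\ell$. My overall strategy is to exhibit, for $t$ just past $T_a$, an explicit disk in $W_\out(t)\cap B$ whose boundary is homologous to a representative of $b_0(t)$; this forces $b_0(t)=0$ for $t\in(T_a,T_a+\bar\delta']$ with some $\bar\delta'>0$. Triviality for all $t>T_a$ then follows by transitivity of $\succ$ together with the vacuous descent $0_{t'}\succ 0_t$.

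I first set up the local picture by applying Proposition \ref{prop_shape_of_breaking_neck} at $(x,T_a)$ with $\delta_0=1$ and some large $R>R_0$, obtaining $\delta,\bar\delta>0$, a dense $J\subset[T_a-\bar\delta,T_a+\bar\delta]$, and $B=B_\delta(x)$. Fix $T_1\in J\cap[T_a-\bar\delta,T_a)$, so that by Proposition \ref{prop_shape_of_breaking_neck}(\ref{prop_item_is_cylinder}), $K_\ins(T_1)\cap B$ is a solid cylinder along $\ell$ with disk ends $D_1,D_2\subset\partial B$. I construct the loop $\beta\subset W_\out(T_1)\cap B$ exactly as in the proof of Proposition \ref{prop_cannot_terminate_same_time}, so that $[\beta]=\pm b_0(T_1)$. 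Let $\Sigma^\ast\subset B$ be the planar disk through $x$ perpendicular to $\ell$ of radius $\delta/2$. By choosing $R$ large enough, the solid cylinder $K_\ins(T_1)\cap B$ is much thinner than $\delta/2$, so $\partial\Sigma^\ast\subset W_\out(T_1)\cap B$; moreover $\partial\Sigma^\ast$ and $\beta$ both generate the $\Z$ fundamental group of the complement $B\setminus (K_\ins(T_1)\cap B)$, giving $[\partial\Sigma^\ast]=\pm[\beta]=\pm b_0(T_1)$ in $H_1(W_\out(T_1))$.

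The crucial geometric step is the clean-pinch claim: for some $\bar\delta'\in(0,\bar\delta]$, $\Sigma^\ast\subset W_\out(t)\cap B$ for every $t\in(T_a,T_a+\bar\delta']$. By the canonical neighborhood theorem (Theorem \ref{thm_canonical_nbd}) applied at $(x,T_a)$, after parabolic rescaling the flow there is arbitrarily close to the shrinking cylinder along $\ell$; consequently $K_\ins(T_a)\cap B$ is confined to an arbitrarily thin tubular neighborhood of the axis segment $\ell\cap B$, and its intersection with $\Sigma^\ast$ clusters around the single point $x$. The mean convex neighborhood property (Theorem \ref{thm_topological_canonical}(\ref{thm_item_convex})) then yields $K_\ins(t)\cap B\subsetneq K_\ins(T_a)\cap B\setminus M(T_a)$ for $t>T_a$, which in particular excludes $x$, so a compactness argument gives $\Sigma^\ast\cap K_\ins(t)=\emptyset$ for $t>T_a$ sufficiently close to $T_a$. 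Hence $[\partial\Sigma^\ast]=0$ in $H_1(W_\out(t)\cap B)$, and thus in $H_1(W_\out(t))$. Combined with the persistence of $[\partial\Sigma^\ast]=\pm[\beta]=\pm b_0(t)$ across $t\in[T_1,T_a+\bar\delta']$ (witnessed by the time-constant spacetime cylinders $\partial\Sigma^\ast\times[T_1,t]$ and $\beta\times[T_1,t]$, both lying in $W_\out[T_1,t]$ by mean convexity, together with Proposition \ref{prop_homology_unique}), we conclude $b_0(t)=0$ for every $t\in(T_a,T_a+\bar\delta']$.

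Finally, for arbitrary $t>T_a$, pick $t_2\in(T_a,\min(t,T_a+\bar\delta'))$; transitivity of $\succ$ combined with the vacuous $0_{t_2}\succ 0_t$ yields $b_0\succ 0_t$, so $b_0(t)=0_t$ exists and is trivial. The main obstacle is the clean-pinch step, namely showing that the fixed disk $\Sigma^\ast$ actually lies in $W_\out(t)$ just past $T_a$; this rests on combining the shrinking-cylinder structure of the tangent flow with the mean convex neighborhood theorem from \cite{ChoiHaslhoferHershkovitsWhite22_AncientMCF}.
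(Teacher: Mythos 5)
Your proof takes a genuinely different route from the paper's. The paper encloses the neck $M(T_1)\cap B$ by a small Angenent torus, runs MCF, and uses the avoidance principle: when the shrinking torus vanishes (which, by choosing $R$ large and hence $\bar\delta$ small, happens arbitrarily close to $T_a$), the neck has been cut, so $\beta$ becomes trivial. You instead work directly with a fixed cross-sectional disk $\Sigma^\ast$ perpendicular to $\ell$ through $x$ and argue via the mean convex neighborhood structure that $\Sigma^\ast$ lies entirely in $W_\out(t)$ just past $T_a$. This has the appeal of avoiding an external barrier, and the overall structure (identify $[\partial\Sigma^\ast]$ with $\pm b_0(T_1)$ via the fundamental class of the complement of the solid cylinder, propagate it by the time-constant spacetime cylinder, then kill it by filling with $\Sigma^\ast$, and finish by transitivity of $\succ$) is correct.

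However, there is a gap at the ``clean-pinch'' step. You argue that by the canonical neighborhood/tangent flow picture, $K_\ins(T_a)\cap B$ is confined to an arbitrarily thin tube around $\ell\cap B$, so that $K_\ins(T_a)\cap\Sigma^\ast$ ``clusters around the single point $x$,'' and then that the mean-convex inclusion $K_\ins(t)\cap B\subset K_\ins(T_a)\cap B\setminus M(T_a)$ (for $t>T_a$), which removes $x$, forces $\Sigma^\ast\cap K_\ins(t)=\emptyset$. But the thin-tube confinement only gives $K_\ins(T_a)\cap\Sigma^\ast\subset B_{\delta/R}(x)\cap\Sigma^\ast$, a disk of \emph{positive} radius $\delta/R$; removing the single point $x$ from this set leaves a punctured disk, so the conclusion $\Sigma^\ast\cap K_\ins(t)=\emptyset$ does not follow. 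What you actually need is the stronger statement $K_\ins(T_a)\cap\Sigma^\ast=\{x\}$, and your stated justification does not deliver it: the tangent-flow convergence is a statement at scales $\lambda^{-1}\to 0$ around $(x,T_a)$ and, on its own, does not control the slice $K_\ins(t)\cap\Sigma^\ast$ at the fixed scale $\delta$.

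The claim $K_\ins(T_a)\cap\Sigma^\ast=\{x\}$ is, I believe, true, but establishing it requires combining (i) Proposition \ref{prop_shape_of_breaking_neck}, which guarantees that for $t\in J\cap[T-\bar\delta,T_a)$ the slice $M(t)\cap\Sigma^\ast\cap B$ is a \emph{single} closed curve bounding the single convex disk $K_\ins(t)\cap\Sigma^\ast$, with (ii) the uniqueness of the cylindrical tangent flow at $(x,T_a)$ from \cite{CM15_Lojasiewicz}, which forces this one closed curve to lie inside $B_{R\sqrt{T_a-t}}(x)$ and to have diameter on the order of $\sqrt{T_a-t}$; then the nested convex disks $K_\ins(t)\cap\Sigma^\ast$ shrink to $\{x\}$, and $K_\ins(T_a)\cap\Sigma^\ast=\bigcap_{t<T_a}K_\ins(t)\cap\Sigma^\ast=\{x\}$ via the level-set continuity of Lemma \ref{lem_relate_Kin_func}. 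If you supply this argument, your proof goes through; without it, the central geometric step is unsupported. The paper's Angenent-torus barrier sidesteps exactly this issue, at the cost of a limiting argument in $R$ to push the torus's vanishing time down to $T_a$.
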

	\begin{proof}
		We prove the first statement and the second statement is similar. Let us retain the notation from the previous proof. By Proposition \ref{prop_shape_of_breaking_neck}, $M(T_1)\cap B$ (recall $T_1=T-\bar\delta$) is close to a round cylinder. Now, enclose this cylinder by an Angenent torus,  and run the MCF. Note that:
		\begin{itemize}
			\item Since the time interval around $T$ given by the mean convex neighborhood property is independent of $R$ (in Proposition \ref{prop_shape_of_breaking_neck}), we can, by making $R$ very large and thus the Angenent torus very small, assume that the mean convex neighborhood property still holds at the moment the Angenent torus vanishes.
			\item By the avoidance principle, the distance between the Angenent torus and $M(t)$ is non-decreasing.
		\end{itemize}
		Hence, when the Angenent torus vanishes, the neck $M(t)\cap B$ has already been ``cut into disconnected pieces.'' As a result, the loop $\beta$, which remains disjoint from the evolving surface, would have become trivial at the moment the Angenent torus disappears.
		
		Finally, note that as $R\to\infty$, $\bar\delta\to 0$ (see Theorem \ref{thm_topological_canonical}, item \ref{thm_item_is_colid_cylinder}). By the definition of cylindrical singularity, we know $T_1=T-\bar\delta\to T$ and $M(T-\bar\delta)\cap B$ tends to be an actual round cylinder after rescaling by the factor $R$. This shows that the moment when Angenent torus vanishes will tend to $T$. Therefore, $b_0(t)$ is trivial for each $t>T$. 
	\end{proof}

	Finally, since we have already proven $T_g\geq T$, to complete the proof of Theorem \ref{thm_terminate_time_change_genus}, it remains to show:
	\begin{prop}
		$T_g\leq T$.
	\end{prop}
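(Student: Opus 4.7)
The plan is to prove $T_g\leq T$ by showing that $H_1(\R^3\backslash M(t_0))=0$ for every smooth time $t_0>T$ with $M(t_0)\neq\emptyset$; since $M(t_0)$ is then a smooth compact surface, this forces every connected component to be a sphere, i.e.\ $\genus(M(t_0))=0$. Without loss of generality I assume $\ft(a_0)=T<\ft(b_0)$, so by Proposition~\ref{prop_only_one_terminate} the class $b_0(t)$ exists and is trivial for every $t>T$, and by Theorem~\ref{thm_exist_break_point} the class $a_0$ breaks at some inward neck singularity $(x,T)$.

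I fix such a $t_0$ and suppose for contradiction that some $\bar c\in H_1(W(t_0))$ is non-trivial. By Proposition~\ref{prop_at_least_one} there exists $c_0\in H_1(W(0))=\Z a_0\oplus\Z b_0$ with $c_0\succ\bar c$, and I write $c_0=m a_0+n b_0$. If $m=0$, then $c_0=n b_0$ and the uniqueness of descent (Proposition~\ref{prop_homology_unique}) together with $n b_0\succ 0$ at $t_0$ would force $\bar c=0$, a contradiction. Hence $m\neq 0$, and since $\succ$ is additive by definition, subtracting $n b_0\succ 0$ gives $m a_0\succ\bar c$ at time $t_0$.

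Next I would realize this descent by a polyhedral $2$-chain $C\subset W[0,t_0]$, using the slicing trick from the proof of Proposition~\ref{prop_unique_clss_inbetween}, so that the time-slice $\beta_s=\{x:(x,s)\in C\}$ is a $1$-cycle in $W(s)$ for each $s\in[0,t_0]$ and $[\beta_s]=(m a_0)(s)=m a_0(s)$ for every $s<T$. The slice $\beta_T$ is a compact $1$-chain in $W(T)=\R^3\backslash M(T)$, so it avoids the singular point $x\in M(T)$ by a positive distance $\delta'>0$; by continuity of polyhedral slices (after a generic choice of $C$), $\beta_s$ avoids $B_{\delta'/2}(x)$ for every $s$ in a small interval around $T$.

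The main obstacle, and the decisive geometric input, is to convert this spatial avoidance into a contradiction via Proposition~\ref{prop_shape_of_breaking_neck}. Applied to $a_0$ with $\delta_0:=\delta'/2$, that proposition yields $\delta\in(0,\delta_0)$, $\bar\delta>0$, and a dense set $J\subset(T-\bar\delta,T+\bar\delta)$ such that for each $s\in J\cap(T-\bar\delta,T)$, every representative of $a_0(s)$ has non-zero intersection number with a disk $D_1\subset K_\ins(s)\cap\partial B_\delta(x)$. Since intersection with $D_1$ is $\Z$-linear in the homology class of a $1$-cycle in $W(s)$, every representative of $m a_0(s)$ has intersection number equal to $m$ times a non-zero integer, hence non-zero. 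Picking any $s\in J$ sufficiently close to $T$, the slice $\beta_s$ would simultaneously represent $m a_0(s)$ and avoid $B_\delta(x)\supset D_1$, giving intersection number zero with $D_1$ --- the desired contradiction. The delicate point is the compatibility of the two smallness parameters: $\delta'$ is handed to me by the a priori $2$-chain $C$, while Proposition~\ref{prop_shape_of_breaking_neck} supplies the freedom, exercised only afterwards, to take $\delta<\delta'/2$.
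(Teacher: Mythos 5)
Your argument is correct, but it takes a noticeably longer route than the paper's proof, which is essentially a one-liner.

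The paper simply observes: if $T_g>T$, pick a smooth time $T_2\in(T,T_g)$ where $M(T_2)$ is a torus; then Proposition~\ref{prop_only_one_terminate} gives $b_0(T_2)$ exists and is trivial, while Proposition~\ref{prop_if_exists_then_generator} says $b_0(T_2)$ must \emph{generate} $H_1(W_\out(T_2))\cong\Z$ — a generator is never trivial, contradiction. You could have ended your own argument the same way: once you observe that $M(t_0)$ has positive genus (hence genus exactly one, by monotonicity) and that $b_0(t_0)$ exists and is trivial, Proposition~\ref{prop_if_exists_then_generator} gives the contradiction immediately, with no need to pass through Proposition~\ref{prop_at_least_one}, the decomposition $c_0=ma_0+nb_0$, the polyhedral slicing, or the intersection-number machinery of Proposition~\ref{prop_shape_of_breaking_neck}.

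What you instead do is rebuild the key geometric input by hand: any non-trivial class in $H_1(W(t_0))$ must descend from something with nonzero $a_0$-component, which you then forbid by slicing the bounding $2$-chain and comparing against the non-vanishing intersection number with the disk $D_1$ near the breaking neck. This is sound — the decomposition $W(s)=W_\ins(s)\sqcup W_\out(s)$ makes the intersection pairing with $D_1$ a $\Z$-linear functional killing the $W_\out$ component, the additivity of $\succ$ and uniqueness (Proposition~\ref{prop_homology_unique}) dispose of the $m=0$ case cleanly, and the order-of-quantifiers issue you flag at the end (obtain $\delta'$ from the chain $C$ first, then take $\delta<\delta'/2$ from Proposition~\ref{prop_shape_of_breaking_neck}) is handled correctly. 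The tradeoff is that your approach re-derives, inside the proof of $T_g\le T$, the same neck-pinch geometry that the paper only invokes in the proof of $T_g\ge T$; the paper's route keeps the two halves genuinely independent, with $T_g\le T$ being a purely soft consequence of Propositions~\ref{prop_if_exists_then_generator} and~\ref{prop_only_one_terminate}.
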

	\begin{proof}
		Suppose by contradiction $T_g>T$. Again, we can just consider the case $\ft(a_0)< \ft(b_0)$. By our Proposition \ref{prop_only_one_terminate}, we can pick a time $T_2\in (T,T_g)$ when $M(T_2)$ is a smooth torus and $b_0(T_2)$ exists and is trivial. This contradicts Proposition \ref{prop_if_exists_then_generator}, which says that $b_0(T_2)$ generates $H_1(W_\out(T_2))$.
	\end{proof}

	This completes the proof of Theorem \ref{thm_terminate_time_change_genus}.
	\subsection{Termination time of limit of MCF}\label{sect_termination_time}
	Finally, in \S \ref{sect_termination_time}, let us mention a proposition that describes a relationship between the termination time and a convergent sequence of initial conditions.
	\begin{prop}\label{prop_termination_time_liminf}
		The setting is as follows.
		\begin{itemize}
			\item Let  $\cM^i=\{M^i(t)\}_{t\geq 0}$, $i=1,2,...$, and $\cM=\{M(t)\}_{t\geq 0}$ all be  MCF through cylindrical and spherical singularities, such that each $M^i(0)$ and $M(0)$ are smooth, close hypersurfaces.
			\item For each $i$, assume $M^i(0)$ is sufficiently close in $C^\infty$ to $M(0)$ such that each $H_1(W^{\cM^i}(0))$ can be canonically identified with $H_1(W^{\cM}(0))$. Moreover, $M^i(0)\to M(0)$ in $C^\infty$.
			\item Let $c_0\in H_1(W^{\cM}(0) )$. Note that $c_0$ can be viewed as an element of $H_1(W^{\cM^i}(0))$ for each $i$ too.
		\end{itemize}
		Then $$\liminf_i\ft^{\cM^i}(c_0)\geq \ft^{\cM}(c_0).$$
	\end{prop}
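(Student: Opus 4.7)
\textbf{Proof proposal for Proposition \ref{prop_termination_time_liminf}.}
Set $T = \ft^{\cM}(c_0)$, and let $t$ be any time with $0 < t < T$ (if $T = 0$ the conclusion is trivial, and we handle $T = \infty$ by letting $t$ tend to infinity at the end). The plan is to show that for all sufficiently large $i$, $\ft^{\cM^i}(c_0) \geq t$, which immediately yields the claim upon taking $t \nearrow T$.

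By the a.e.~time smoothness of the $\cM^i$ and $\cM$ combined with Proposition \ref{lem_ae_time_smooth_conv}, I first restrict attention to $t$ at which $M^i(t) \to M(t)$ in $C^\infty$; such $t$ are dense in $[0, T)$, so this restriction is harmless for the limiting argument. Since $t < T = \ft^\cM(c_0)$, the class $c_0(t) \in H_1(W^\cM(t))$ exists, so we may choose representatives $\gamma_0 \in c_0$ and $\gamma_t \in c_0(t)$ together with a $2$-chain $\Gamma \subset W^\cM[0,t]$ satisfying $\partial \Gamma = \gamma_0 - \gamma_t$. Because $M^\cM(0)$ and $M^\cM(t)$ are smooth, $\gamma_0$ and $\gamma_t$ can be arranged to sit at positive distance from $M(0)$ and $M(t)$, respectively.

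The crucial step is to transfer $\Gamma$ to the flows $\cM^i$. Assuming non-fattening (which holds by \cite[Theorem 1.19]{ChoiHaslhoferHershkovitsWhite22_AncientMCF}), the spacetime track $\cM[0,t]$ is precisely the complement of $W^\cM[0,t]$ in $\R^3 \times [0,t]$, so the compactness of $\Gamma$ inside the open set $W^\cM[0,t]$ gives
\[
\epsilon := \dist\bigl(\Gamma, \cM[0,t]\bigr) > 0.
\]
By Proposition \ref{lem_ae_time_smooth_conv}(2), $\cM^i \to \cM$ in the Hausdorff sense, so for $i$ large $\cM^i[0,t]$ lies in the $\epsilon$-neighborhood of $\cM[0,t]$, and consequently $\Gamma \cap \cM^i[0,t] = \emptyset$, i.e.\ $\Gamma \subset W^{\cM^i}[0,t]$. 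Similarly, the $C^\infty$ convergence at times $0$ and $t$ ensures that $\gamma_0 \subset W^{\cM^i}(0)$ and $\gamma_t \subset W^{\cM^i}(t)$ for $i$ large, and under the canonical identification $H_1(W^{\cM^i}(0)) \cong H_1(W^{\cM}(0))$ the loop $\gamma_0$ still represents $c_0$.

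Therefore, for such $i$, the $2$-chain $\Gamma$ realizes the descent relation $c_0 \succ [\gamma_t]$ in the flow $\cM^i$, giving $\ft^{\cM^i}(c_0) \geq t$. Taking $\liminf_i$ and then $t \nearrow T$ (through the a.e.\ smooth times) delivers $\liminf_i \ft^{\cM^i}(c_0) \geq T$. The main technical obstacle is the justification that $\dist(\Gamma, \cM[0,t]) > 0$ together with the Hausdorff upgrade of the spacetime convergence; everything else is bookkeeping about canonical identifications and smooth convergence at individual time slices.
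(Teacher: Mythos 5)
Your argument is correct and is essentially the paper's own proof: both transfer a $2$-chain $\Gamma\subset W^{\cM}[0,t]$ witnessing the descent relation to $W^{\cM^i}[0,t]$ for large $i$ via the Hausdorff convergence of spacetime tracks from Proposition~\ref{lem_ae_time_smooth_conv}, the only cosmetic difference being that the paper argues by contradiction at a single intermediate time $\tfrac{T_1+T}{2}$ while you argue directly and send $t\nearrow T$ (and your restriction to a.e.\ smooth times is unnecessary, since only the Hausdorff convergence of spacetime tracks is used). One small imprecision: you should take $\epsilon:=\dist(\Gamma,\cM)$ rather than $\dist(\Gamma,\cM[0,t])$, since Hausdorff convergence of $\cM^i\to\cM$ controls the distance to $\cM$, not to its time-truncated track; this is harmless because $\Gamma\subset\R^3\times[0,t]$ is compact and disjoint from the closed set $\cM$, so $\dist(\Gamma,\cM)>0$ as well, and then $\Gamma\cap\cM^i=\emptyset$ for $i$ large gives $\Gamma\subset W^{\cM^i}[0,t]$ as you want.
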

	\begin{proof}
		Let $T=\ft^{\cM}(c_0)$.
		
		We first consider the case $T<\infty$. Suppose by contradiction that there exists a subsequence $\{i_k\}_{k=1}^\infty$ and some $T_1<T$ such that $\ft^{\cM^{i_k}}(c_0)\leq T_1$ for each $k$.  Pick some element $\gamma_0\subset W^{\cM}(0)$ with $[\gamma_0]=c_0$, and  $\gamma_1\subset W^{\cM}(\frac{T_1+T}2)$ with $[\gamma_1]=c_0(\frac{T_1+T}2)$. By definition, $\gamma_0$ and $\gamma_1$ together bound some $\Gamma\subset W^{\cM}[0,\frac{T_1+T}2]$. 
		
		Now, recall that $\cM^i\to\cM$ in the Hausdorff sense by Proposition \ref{lem_ae_time_smooth_conv}. Thus, since $\Gamma$ is compact, for all sufficiently large $i$, $\Gamma\subset W^{\cM^i}[0,\frac{T_1+T}2]$. Moreover, $\gamma_0$  represents $c_0\in H_1(W^{\cM^i}(0))$ for such large $i$. This contradicts that $\ft^{\cM^{i_k}}(c_0)\leq T_1$ for each $k$.
		
		Lastly,  the case $T=\infty$ can be done similarly using the fact that the flow $\cM$ vanishes in finite time.
	\end{proof}

	\section{Proof of main theorems}\label{sect_proof}
	\subsection{Proof of Theorem \ref{thm:main1}} Suppose by contradiction that for each $s\in [0,1]$, $\{M^s(t)\}_{t\geq 0}$ is a MCF through cylindrical and spherical singularities.
	For each $s\in [0,1]$, let $$T^s=\min\{\ft^{\cM^s}(a_0),\ft^{\cM^s}(b_0)\}.$$ Furthermore, Propositions \ref{prop_cannot_terminate_same_time} and \ref{prop_only_one_terminate} show that either $a_0$ or $b_0$ will terminate, but not both. Consequently, we can represent $[0,1]$ as a disjoint union $A\sqcup B$, where $A$ contains those $s$ for which $T^s=\ft^{\cM^s}(a_0)$, and $B$ contains those $s$ for which $T^s=\ft^{\cM^s}(b_0)$. Note that $0\in A$ and $1\in B$ by the assumption. Thus, the following lemma leads us directly to a contradiction.
	
	\begin{lem}
		The sets $A$ and $B$ are both closed.
	\end{lem}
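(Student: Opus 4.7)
\medskip

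\noindent\textbf{Plan.} By symmetry it suffices to prove that $A$ is closed; the argument for $B$ is obtained by interchanging the roles of $a_0$ and $b_0$. So let $(s_i)\subset A$ with $s_i\to s_\infty\in[0,1]$, and I aim to show $s_\infty\in A$.

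\noindent\textbf{Step 1: Reformulate membership in $A$.} Using Proposition \ref{prop_cannot_terminate_same_time} and Proposition \ref{prop_only_one_terminate}, for the flow $\cM^s$ exactly one of $a_0,b_0$ terminates, and the other one never terminates. Hence $s\in A$ is equivalent to
\[
\ft^{\cM^s}(a_0)<\infty \quad\text{and}\quad \ft^{\cM^s}(b_0)=\infty.
\]
In particular, for each $i$ we have $\ft^{\cM^{s_i}}(b_0)=\infty$.

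\noindent\textbf{Step 2: Pass to the limit using Proposition \ref{prop_termination_time_liminf}.} Since $\{M^{s_i}\}_{s_i\in[0,1]}$ is a smooth family and $s_i\to s_\infty$, the initial data $M^{s_i}(0)\to M^{s_\infty}(0)$ in $C^\infty$. Moreover, under our contradiction hypothesis, every $\cM^{s}$ is a MCF through cylindrical and spherical singularities, so $\cM^{s_\infty}$ is one too. Thus the hypotheses of Proposition \ref{prop_termination_time_liminf} are satisfied (after reindexing the family), and applied to the homology class $b_0$ we get
\[
\ft^{\cM^{s_\infty}}(b_0)\;\leq\;\liminf_{i\to\infty}\ft^{\cM^{s_i}}(b_0)\;=\;+\infty,
\]
so $b_0$ never terminates along $\cM^{s_\infty}$. (A brief justification: $a_0$ and $b_0$ are canonically defined for all nearby tori, since these homology groups of the interior/exterior regions remain $\Z$ under small $C^\infty$ perturbations.)

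\noindent\textbf{Step 3: Conclude $s_\infty\in A$.} By Proposition \ref{prop_will_terminate}, we have $T^{s_\infty}=\min\{\ft^{\cM^{s_\infty}}(a_0),\ft^{\cM^{s_\infty}}(b_0)\}<\infty$. Since $\ft^{\cM^{s_\infty}}(b_0)=\infty$ by Step 2, we must have $\ft^{\cM^{s_\infty}}(a_0)<\infty=\ft^{\cM^{s_\infty}}(b_0)$, which by the reformulation in Step 1 means $s_\infty\in A$. Thus $A$ is closed, and the symmetric argument (swap $a_0\leftrightarrow b_0$) gives that $B$ is closed. Since $A\sqcup B=[0,1]$ with $0\in A$ and $1\in B$, this contradicts the connectedness of $[0,1]$, completing the proof of Theorem \ref{thm:main1}.

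\noindent\textbf{Main obstacle.} The substantive input is not the closedness argument itself (which is a soft limiting argument once the right concepts are in place), but rather the already-established convergence Proposition \ref{prop_termination_time_liminf}, together with the mean-convex neighborhood and canonical neighborhood theory that ensures the contradiction hypothesis produces a genuine MCF through cylindrical and spherical singularities for \emph{each} $s$. The nontrivial lifting is that a homology class cannot be ``resurrected'' from the limit flow: this is exactly the content of Proposition \ref{prop_termination_time_liminf}, which uses the Hausdorff convergence of spacetime tracks so that any bounding chain $\Gamma$ for the limit class lives in $W^{\cM^i}$ for all large $i$.
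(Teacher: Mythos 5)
Your Step 2 contains a genuine logical gap. Proposition \ref{prop_termination_time_liminf} gives \emph{lower} semi-continuity of the termination time: $\ft^{\cM^{s_\infty}}(b_0)\leq\liminf_i\ft^{\cM^{s_i}}(b_0)$. Since the right-hand side is $+\infty$, this inequality is vacuous — it says nothing more than ``$\ft^{\cM^{s_\infty}}(b_0)\leq+\infty$,'' which is compatible with $\ft^{\cM^{s_\infty}}(b_0)$ being finite. You cannot conclude from it that $b_0$ never terminates for $\cM^{s_\infty}$. What you would need is the \emph{opposite} inequality (upper semi-continuity), which is false in general: a homology class that never terminates for $\cM^{s_i}$ can perfectly well terminate for the limit flow, just as a family of nearly-degenerate necks that barely open up can close in the limit.

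The missing ingredient is an upper bound on $T^{s_\infty}$ coming from the sequence. The paper supplies this via Theorem \ref{thm_terminate_time_change_genus}: the time $T^s=\min\{\ft^{\cM^s}(a_0),\ft^{\cM^s}(b_0)\}$ is exactly the time at which the genus of $M^s(t)$ drops from $1$ to $0$ (for a.e.\ $t$). Combined with Proposition \ref{lem_ae_time_smooth_conv}, which gives $M^{s_i}(t)\to M^{s_\infty}(t)$ in $C^\infty$ for a.e.\ $t$, this genus characterization forces $T^{s_i}\to T^{s_\infty}$. Only then can one write $T^{s_\infty}=\liminf_i T^{s_i}=\liminf_i\ft^{\cM^{s_i}}(a_0)\geq\ft^{\cM^{s_\infty}}(a_0)\geq T^{s_\infty}$, where the first inequality uses Proposition \ref{prop_termination_time_liminf} applied to $a_0$ (not $b_0$), and conclude equality, hence $s_\infty\in A$. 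Without the genus argument, lower semi-continuity alone cannot close the proof: it is a one-sided estimate and you applied it to the wrong homology class.
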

	\begin{proof}
		We will just prove that $A$ is closed.
		Let $s\in [0,1]$ be an accumulation point of $A$, and pick a sequence $s_i$ in $A$ with $s_i\to s$. Note that:
		\begin{itemize}
			\item For each $i$, by Theorem \ref{thm_terminate_time_change_genus}, $\genus(M^{s_i}(t))=1$ for a.e. $t<T^{s_i}$ and $\genus(M^{s_i}(t))=0$ for a.e. $t>T^{s_i}$. 
			\item Similarly, $\genus(M^s(t))=1$ for a.e. $t<T^{s}$ and $\genus(M^{s}(t))=0$ for a.e. $t>T^{s}$. 
		\end{itemize} 
		Thus, together with Proposition \ref{lem_ae_time_smooth_conv}, which says $M^s_i(t)\to M^s(t)$ in $C^\infty$ for a.e. $t\geq 0$, we know $T^{s_i}\to T^s$. Hence,
		$$T^s=\liminf_i  T^{s_i}=\liminf_i \ft^{\cM^{s_i}}(a_0)\geq \ft^{\cM^{s}}(a_0).$$
		Note that the second equality holds because $s_i\in A$, and the inequality holds by Proposition \ref{prop_termination_time_liminf}. Thus,  we know $T^s=\ft^{\cM^{s}}(a_0)$, which means for the flow $\cM^s$,  $a_0$ will terminate but  $b_0$ will not. So $s\in A$. This shows that $A$ is closed.
	\end{proof}
	This finishes the proof of Theorem \ref{thm:main1}.
	\begin{rmk}\label{rmk_main_thm_fail}
		Let us explain why Theorem \ref{thm:main1} would not hold if the initial conditions have genus greater than one. For example, consider the genus $2$ surface depicted in    
		Figure \ref{fig:genus_two_b}, where $a_0$ and $b_0$ are linked  as shown. Then, the MCF actually could develop both inward and outward cylindrical singularities simultaneously, with $a_0$ breaking at the inward one and $b_0$ breaking at the outward one. This phenomenon may prevent a genus one singularity from showing up in any intermediate flow between $\{M^0(t)\}_{t\geq 0}$ and $\{M^1(t)\}_{t\geq 0}$, in the setting of Theorem \ref{thm:main1}.
		
		One might think if we choose $a_0$ and $b_0$ better, like in Figure \ref{fig:genus_two_a}, then the conclusion of Theorem \ref{thm:main1} may hold. However, Figure \ref{fig:genus_two_b} and \ref{fig:genus_two_a} are actually homotopic to each other. In conclusion, in a genus two surface, we cannot force a genus one singularity to appear just by topology: The geometry of the initial conditions must play a role.
	\end{rmk}

	\begin{figure}[h]
		\centering
		\begin{minipage}{0.45\textwidth}
			\centering
			\includegraphics[width=1.2\textwidth]{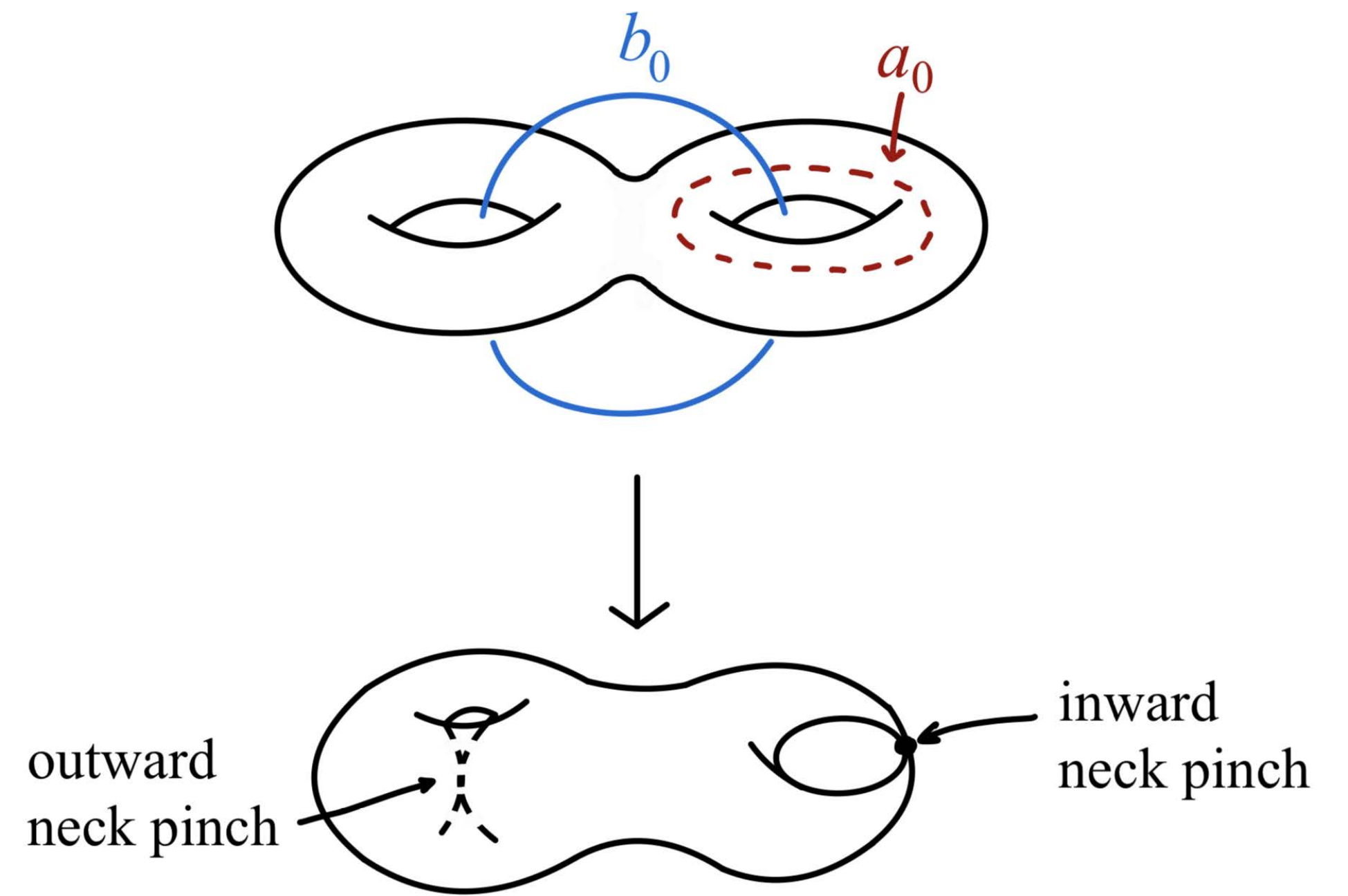}
			\caption{}\label{fig:genus_two_b}
		\end{minipage}
		\begin{minipage}{0.45\textwidth}
			\centering
			\includegraphics[width=0.7\textwidth]{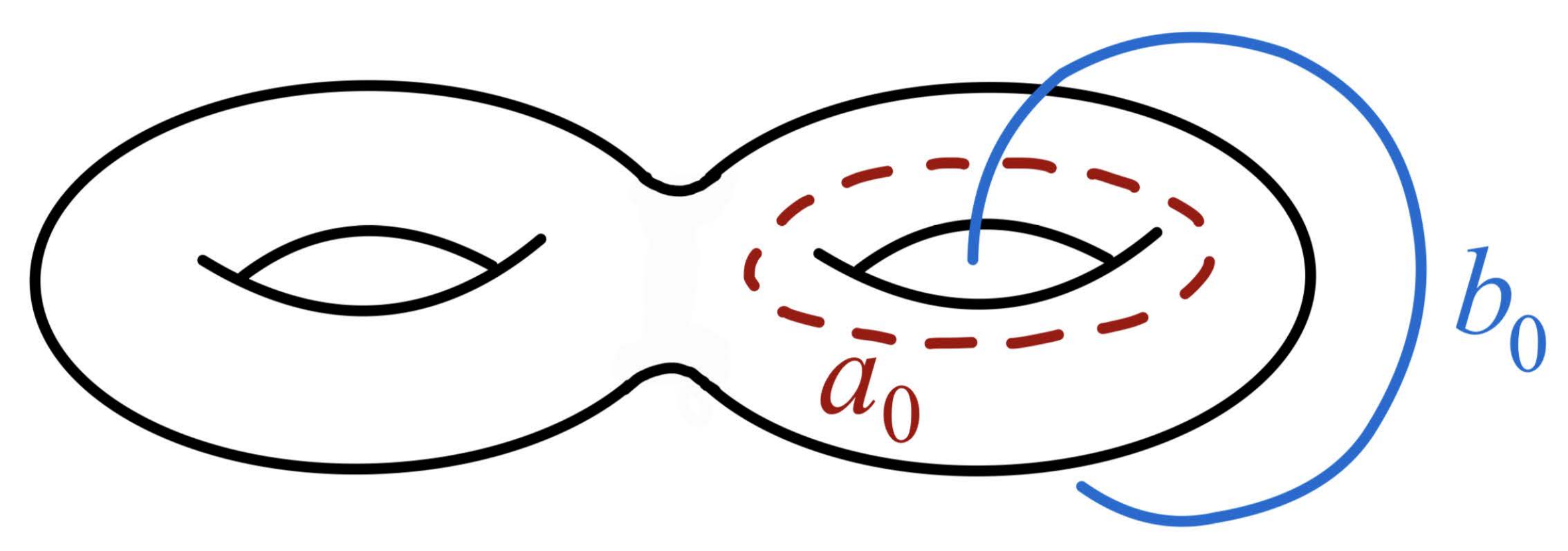} 
			\caption{}
			\label{fig:genus_two_a}
		\end{minipage}\hfill
	\end{figure}
	
	\subsection{Proof of Corollary \ref{cor_less_than_2}}
	Let $\cM^s:=\{M^s(t)\}_{t\geq 0}$ be the level set flow starting from $M^s(0):=M^s$. We can apply Theorem \ref{thm:main1} to the flows $\cM^s$, $s\in[0,1]$, which shows there exists $s_0\in [0,1]$ such that $\cM^{s_0}$ has a singularity $(x,T)$ that is not (multiplicity one) cylindrical or spherical. In other words, every tangent flow $\cM'$ at $(x,T)$ is not the shrinking cylinder or sphere of multiplicity one. Recall that by \cite{Ilmanen95_Sing2D}, $\cM'$ is a smooth, embedded, self-shrinking flow $\{\sqrt{-t}m\Sigma'\}_{t<0}$ with genus at most one and has multiplicity $m$. But the multiplicity can only be $1$ by the entropy bound $\Ent(M^{s_0})<2$ and the monotonicity formula. Thus, $\Sigma'$ has genus $1$.
	
	\subsection{Proof of Theorem \ref{thm:main2}}
	Note that we have $\Ent(M^s)<2$ for each $s$ as $M^s$ is close to $\mathbb T$, which has entropy less than $2$.
	To apply Corollary \ref{cor_less_than_2}, it suffices to show that for the MCF starting from $M^0$ (resp. $M^1$), the inward (resp. outward) torus neck will pinch. But this is given by Proposition \ref{prop_perturb_shrinker_which_terminate_first}.
	
	\subsection{Proof of Theorem \ref{thm_genus1_least_entropy}}
	Let $\Sigma_1$ be a genus one embedded shrinker in $\R^3$ with the least entropy. Recall that by \cite{ColdingMinicozzi12_generic} $\index(\Sigma_1)\geq 5$. Therefore, in order to prove Theorem \ref{thm_genus1_least_entropy}, let us suppose by contradiction that $\Sigma_1$ is compact with index at least $6$.
	
	We first need a family of initial conditions to run MCF. That will be provided by the following lemma.
	\begin{lem}\label{thm_family_initial_condition}
		Let $\Sigma^n$ by any smooth, embedded, compact, $n$-dimensional shrinker in $\R^{n+1}$ with index at least $6$. Let $\epsilon>0$ be sufficiently small.
		Then there exists a one-parameter  family of smooth, compact, embedded surfaces $\{M^s(0)\}_{s\in [0,1]}$ such that:
		\begin{enumerate}
			\item\label{thm_family_initial_condition_cts} The family varies continuous in the $C^\infty$-topology, and each $M^s(0)$ is $\epsilon$-close to $C^\infty$ to $\Sigma$.
			\item\label{thm_family_initial_condition_entropy} Each $M^s(0)$ has entropy less than that of $\Sigma$. 
			\item\label{thm_family_initial_condition_inside} $M^0(0), M^1(0),$ and $\Sigma$ are all disjoint, with $M^0(0)$  inside $\Sigma$ and $M^1(0)$  outside.
		\end{enumerate}
	\end{lem}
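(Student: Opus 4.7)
I plan to construct the family as normal graphs over $\Sigma$,
\[
M^s(0) := \{\,x + u_s(x)\,\nu(x) : x\in\Sigma\,\}, \qquad u_s := f(s)\phi_1 + g(s)\phi_2,
\]
where $\nu$ is the outward unit normal, $\phi_1,\phi_2$ are two unstable eigenfunctions of the Jacobi operator $L := \Delta_\Sigma - \tfrac12 x\cdot\nabla + |A|^2 + \tfrac12$ that are $L^2(e^{-|x|^2/4}\,d\cH^n)$-orthogonal to the symmetry eigenfunctions, and $(f(s),g(s)) := \delta(-\cos\pi s,\sin\pi s)$ traces a half-circle of radius $\delta$ in $\R^2$ from $(-\delta,0)$ to $(\delta,0)$ avoiding the origin. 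For $\delta$ sufficiently small, $M^s(0)$ is smooth and embedded, varies continuously in $s$, and is $\epsilon$-close in $C^\infty$ to $\Sigma$, giving (\ref{thm_family_initial_condition_cts}). Since $\phi_1 > 0$ on $\Sigma$, $M^0(0) = \Sigma - \delta\phi_1\nu$ lies strictly inside $\Sigma$ while $M^1(0) = \Sigma + \delta\phi_1\nu$ lies strictly outside, giving (\ref{thm_family_initial_condition_inside}).

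\textbf{Producing $\phi_1$ and $\phi_2$.} Let $V$ be the $L$-invariant subspace spanned by the translation eigenfunctions $v\cdot\nu$ ($v\in\R^{n+1}$, eigenvalue $\tfrac12$) and the dilation eigenfunction $H$ (eigenvalue $1$); for compact embedded $\Sigma$ these are linearly independent and $\dim V = n+2 = 4$ in our $n=2$ setting. Consequently $L|_{V^\perp}$ has at least $6-4 = 2$ positive eigenvalues. Let $\phi_1 > 0$ be the first eigenfunction of $L$ on $\Sigma$, with eigenvalue $\mu_1$. Since the round sphere has index $4$, our hypothesis rules out $\Sigma$ being a sphere; then by Huisken's classification of mean-convex shrinkers, $H$ changes sign on $\Sigma$, so $\phi_1$ is not proportional to $H$ and $\mu_1 \neq 1$. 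Similarly $\mu_1 \neq \tfrac12$, since each translation eigenfunction $v\cdot\nu$ integrates to zero over $\Sigma$ and hence changes sign. Thus $\mu_1 > 1$, so by self-adjointness $\phi_1 \in V^\perp$. Pick $\phi_2 \in V^\perp$ to be any further eigenfunction of $L|_{V^\perp}$ with positive eigenvalue $\mu_2 > 0$, chosen $L^2$-orthogonal to $\phi_1$.

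\textbf{Entropy decrease and main obstacle.} For (\ref{thm_family_initial_condition_entropy}), I invoke the Colding--Minicozzi framework \cite{ColdingMinicozzi12_generic, CIMW13_EntropyMinmzer}: since $\Sigma$ is a compact shrinker, $(x_0,t_0)=(0,1)$ is a non-degenerate maximum of $(x_0,t_0)\mapsto F_{x_0,t_0}(\Sigma)$. The implicit function theorem then locates the entropy-attaining pair of $M^s(0)$ smoothly near $(0,1)$, and because $u_s \in V^\perp$ the cross-terms with the translation and dilation variations vanish to leading order, yielding
\[
\Ent(M^s(0)) = \Ent(\Sigma) - \tfrac{1}{2}\int_\Sigma u_s\,L u_s\,e^{-|x|^2/4} + O(\|u_s\|_{C^2}^3).
\]
Using $L\phi_i = \mu_i\phi_i$ and the orthogonality of $\phi_1,\phi_2$, this integral equals $\delta^2\bigl(\mu_1\cos^2(\pi s)\|\phi_1\|^2 + \mu_2\sin^2(\pi s)\|\phi_2\|^2\bigr) \geq c\delta^2$ for some $c > 0$ independent of $s$, so taking $\delta$ small enough gives $\Ent(M^s(0)) < \Ent(\Sigma)$ uniformly in $s$. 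The principal obstacle is precisely this last implication: one must rule out the possibility that the maximizing pair $(x_0^*(s),t_0^*(s))$ shifts along the family in a way that cancels the second-order decrease of $F_{0,1}$. This is exactly what the orthogonality $u_s\perp V$ forbids, and it is why the hypothesis $\index(\Sigma) \geq n+4 = 6$---providing \emph{two} unstable directions past the $n+2$ symmetry directions, so that the path in $(f,g)$-space can avoid $0$---is essential.
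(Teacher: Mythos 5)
Your proposal is correct and takes essentially the same approach as the paper's proof: both perturb $\Sigma$ along a half-circle in the two-dimensional span of the first (one-signed, positive) unstable eigenfunction and a second unstable eigenfunction orthogonal to the four-dimensional translation/dilation subspace, so that item (\ref{thm_family_initial_condition_inside}) comes from positivity of $\phi_1$ and item (\ref{thm_family_initial_condition_entropy}) comes from orthogonality to the symmetry directions. The extra detail you supply --- the argument that $\mu_1>1$ so $\phi_1\perp V$, and the second-variation computation of $F$ with the implicit-function-theorem localization of the maximizing $(x_0,t_0)$ --- is precisely the content of Theorem~0.15 in \cite{ColdingMinicozzi12_generic}, which the paper simply cites at the corresponding step.
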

	\begin{proof}
		Fix an outward unit normal vector field $\bf n$ to $\Sigma$. Since $\index(\Sigma)\geq 6$, the eigenfunctions of its Jacobi operator, with respect to the Gaussian metric, that have negative eigenvalues include:
		\begin{itemize}
			\item three induced by translation in $\R^3$,
			\item one by scaling,
			\item the unique one-sided one which has the lowest eigenvalue, denoted $\phi_0$,
			\item and at least one more, denoted $\phi_1$,
		\end{itemize}
		all of which are orthonormal under the $L^2$-inner product.
		We will choose $\phi_0>0$.
		
		Let $\epsilon>0$, and define $M^s(0)$ to be the following perturbation of $\Sigma$: $$M^s(0):=\Sigma+\epsilon(-\cos(s\pi)\phi_0+\sin(s\pi)\phi_1)\bf n.$$
		Thus, if $\epsilon>0$ is sufficiently small, clearly the family $\{M^s(0)\}_{s\in [0,1]}$ is smooth. Item (\ref{thm_family_initial_condition_inside}) holds because $\phi_0>0$. Finally, (\ref{thm_family_initial_condition_entropy}) holds because $\phi_0,\phi_1$ are not induced by translation or scaling (see Theorem 0.15 in \cite{ColdingMinicozzi12_generic}).
	\end{proof}
	
	Applying the above lemma to $\Sigma_1$, we obtain a one-parameter family $\{M^s(0)\}_{s\in [0,1]}$ of tori. Then $$\Ent(M^s(0))<\Ent(\Sigma_1)\leq \Ent(\mathbb T)<2.$$ Thus, applying Corollary \ref{cor_less_than_2}, and by the monotonicity formula, we obtain another embedded genus one shrinker with entropy less than $\Sigma_1$, which contradicts the definition of $\Sigma_1$.

	\subsection{Proof of Theorem \ref{thm_eternal}}
	Since $\mathbb T$ is rotationally symmetric, by \cite{Liu2016_index_shrinker}, it has index at least $7$. Again, we need a family of MCF. We will apply \cite[Theorem 1.6]{ChoiMantoulidis22AncientGradientFlows} of Choi-Mantoulidis. Namely, since $\mathbb T$ is a minimal surface with index at least $6$  under the Gaussian metric, it has, as we saw in the proof of Lemma \ref{thm_family_initial_condition}, two orthonormal eigenfunctions $\phi_0,\phi_1$ to the Jacobi operator that
	\begin{itemize}
		\item have negative eigenvalues,
		\item and are both orthogonal to the other 4 eigenfunctions induced by translation and scaling.
	\end{itemize}
	Now, pick an $\epsilon>0$. Applying \cite[Theorem 1.6]{ChoiMantoulidis22AncientGradientFlows} to the 2-dimensional function space spanned by $\phi_0$ and $\phi_1$, we obtain a one-parameter family of smooth ancient {\it rescaled MCF} (i.e. MCF under the Gaussian metric) $\tilde\cM^s=\{\tilde M^s(\tau)\}_{\tau\leq 0}$, $s\in[0,1]$, such that:
	\begin{itemize}
		\item For each $s$, $\tilde M^s(t)\to \mathbb T$ in $C^\infty$ as $t\to-\infty$. 
		\item $\tilde M^0(0)$ lies inside $\mathbb T$ while $\tilde M^1(0)$ lies outside.
		\item $\{\tilde M^s(0)\}_{s\in[0,1]}$ is a smooth family of tori, each being $\epsilon$-close to $\mathbb T$ in $C^\infty$ (see \cite[Corollary 3.4]{ChoiMantoulidis22AncientGradientFlows}).
	\end{itemize}
	
	If $\epsilon$ is small enough, we can apply Theorem \ref{thm:main2}  to the family $\{\tilde M^s(0)\}_{s\in[0,1]}$ to obtain an $s_0\in [0,1]$ such that the level set flow $\{M(t)\}_{t\geq 0}$ with initial condition $M(0)=\tilde M^s(0)$ would develop a singularity at which every tangent flow is given by a multiplicity one, embedded, genus one self-shrinker. 
	
	Finally, we define an ancient smooth MCF $\{\hat M(t)\}_{t\leq -1}$ by rescaling the rescaled MCF $\{\tilde M^{s_0}(\tau)\}_{\tau\leq 0}$:
	$$\hat M(t)=\sqrt{-t}\tilde M(-\log(-t)),\;\;t\leq -1.$$
	Note that $\hat M(-1)=\tilde M(0)=M(0)$. Hence,
	combining the two flows $\{\hat M(t)\}_{t\leq -1}$ and $\{M(t)\}_{t\geq 0}$, we obtain an ancient MCF satisfying Theorem \ref{thm_eternal}.
	
	\subsection{Proof of Corollary \ref{cor_fourth}}
	Let $\Sigma$ be an embedded shrinker with the fourth least entropy in $\R^3$, whose existence was established in \S \ref{sect_intro} already.  
	Suppose by contradiction that $\Sigma$ is rotationally symmetric. Then by Kleene-M\o ller \cite{KM_rotational}, $\Sigma$ is closed with genus one. Moreover, $\Sigma$ has entropy less than $2$ since the shrinking doughnut $\mathbb T$ in \cite{DruganNguyenShrinkingDonut} does, and by  \cite{Liu2016_index_shrinker}, $\Sigma$ has index at least $7$. Therefore, Theorem \ref{thm_eternal}  still holds with $\mathbb T$ replaced by $\Sigma$: The exact same proof will work. As a result, we obtain a genus one shrinker with entropy strictly lower than $\Sigma$. However, the self-shrinkers with the three lowest entropy are the plane, the sphere, and the cylinder (\cite{CIMW13_EntropyMinmzer, BernsteinWang17_small_entropy}). Contradiction arises.

	\bibliographystyle{alpha}
	\bibliography{GMT}

\begin{thebibliography}{CIMIW13}

\bibitem[ADS19]{ADS19}
Sigurd Angenent, Panagiota Daskalopoulos, and Natasa Sesum.
\newblock Unique asymptotics of ancient convex mean curvature flow solutions.
\newblock {\em J. Differential Geom.}, 111(3):381--455, 2019.

\bibitem[ADS20]{ADS20}
Sigurd Angenent, Panagiota Daskalopoulos, and Natasa Sesum.
\newblock Uniqueness of two-convex closed ancient solutions to the mean
  curvature flow.
\newblock {\em Ann. of Math. (2)}, 192(2):353--436, 2020.

\bibitem[And12]{Andrews12_Noncollapsing}
Ben Andrews.
\newblock Noncollapsing in mean-convex mean curvature flow.
\newblock {\em Geom. Topol.}, 16(3):1413--1418, 2012.

\bibitem[Ang92]{Angenent92_Doughnut}
Sigurd~B Angenent.
\newblock Shrinking doughnuts.
\newblock In {\em Nonlinear diffusion equations and their equilibrium states,
  3}, pages 21--38. Springer, 1992.

\bibitem[BK21]{Berchenko-Kogan21_Angenenet_torus_entropy}
Yakov Berchenko-Kogan.
\newblock The entropy of the {A}ngenent torus is approximately 1.85122.
\newblock {\em Exp. Math.}, 30(4):587--594, 2021.

\bibitem[BNS25]{BuzanoNguyenSchulz+2025+35+52}
Reto Buzano, Huy~The Nguyen, and Mario~B. Schulz.
\newblock Noncompact self-shrinkers for mean curvature flow with arbitrary
  genus.
\newblock {\em Journal für die reine und angewandte Mathematik (Crelles
  Journal)}, 2025(818):35--52, 2025.

\bibitem[Bra78]{Brakke78}
Kenneth~A. Brakke.
\newblock {\em The Motion of a Surface by Its Mean Curvature. (MN-20)}.
\newblock Princeton University Press, Princeton, 1978.

\bibitem[Bre15]{Brendle15_inscribed}
Simon Brendle.
\newblock A sharp bound for the inscribed radius under mean curvature flow.
\newblock {\em Invent. Math.}, 202(1):217--237, 2015.

\bibitem[Bre16]{Brendle16_genus0}
Simon Brendle.
\newblock Embedded self-similar shrinkers of genus 0.
\newblock {\em Ann. of Math. (2)}, 183(2):715--728, 2016.

\bibitem[BW17]{BernsteinWang17_small_entropy}
Jacob Bernstein and Lu~Wang.
\newblock A topological property of asymptotically conical self-shrinkers of
  small entropy.
\newblock {\em Duke Math. J.}, 166(3):403--435, 2017.

\bibitem[CCMS20]{CCMS20_GenericMCF}
Otis Chodosh, Kyeongsu Choi, Christos Mantoulidis, and Felix Schulze.
\newblock Mean curvature flow with generic initial data.
\newblock {\em arXiv preprint arXiv:2003.14344}, 2020.

\bibitem[CCMS21]{CCMS21_GenericMCF_LowEntropy}
Otis Chodosh, Kyeongsu Choi, Christos Mantoulidis, and Felix Schulze.
\newblock Mean curvature flow with generic low-entropy initial data.
\newblock {\em arXiv preprint arXiv:2102.11978}, 2021.

\bibitem[CCS23]{chodoshchoischulze2023mean}
Otis Chodosh, Kyeongsu Choi, and Felix Schulze.
\newblock Mean curvature flow with generic initial data ii.
\newblock {\em arXiv preprint arXiv:2302.08409}, 2023.

\bibitem[CGG91]{ChenGigaGoto91_LSF}
Yun~Gang Chen, Yoshikazu Giga, and Shun'ichi Goto.
\newblock Uniqueness and existence of viscosity solutions of generalized mean
  curvature flow equations.
\newblock {\em Journal of differential geometry}, 33(3):749--786, 1991.

\bibitem[CHH22]{ChoiHaslhoferHershkovits18_MeanConvNeighb}
Kyeongsu Choi, Robert Haslhofer, and Or~Hershkovits.
\newblock Ancient low-entropy flows, mean-convex neighborhoods, and uniqueness.
\newblock {\em Acta Math.}, 228(2):217--301, 2022.

\bibitem[CHHW22]{ChoiHaslhoferHershkovitsWhite22_AncientMCF}
Kyeongsu Choi, Robert Haslhofer, Or~Hershkovits, and Brian White.
\newblock Ancient asymptotically cylindrical flows and applications.
\newblock {\em Invent. Math.}, 229(1):139--241, 2022.

\bibitem[CIM15]{ColdingIlmanenMinicozzi15}
Tobias~Holck Colding, Tom Ilmanen, and William~P Minicozzi.
\newblock Rigidity of generic singularities of mean curvature flow.
\newblock {\em Publications math{\'e}matiques de l'IH{\'E}S}, 121(1):363--382,
  2015.

\bibitem[CIMIW13]{CIMW13_EntropyMinmzer}
Tobias~Holck Colding, Tom Ilmanen, William~P Minicozzi~II, and Brian White.
\newblock The round sphere minimizes entropy among closed self-shrinkers.
\newblock {\em Journal of Differential Geometry}, 95(1):53--69, 2013.

\bibitem[CM12]{ColdingMinicozzi12_generic}
Tobias~H Colding and William~P Minicozzi.
\newblock Generic mean curvature flow i; generic singularities.
\newblock {\em Annals of mathematics}, pages 755--833, 2012.

\bibitem[CM15]{CM15_Lojasiewicz}
Tobias~Holck Colding and William~P. Minicozzi, II.
\newblock Uniqueness of blowups and \l ojasiewicz inequalities.
\newblock {\em Ann. of Math. (2)}, 182(1):221--285, 2015.

\bibitem[CM16]{ColdingMinicozzi16_SingularSet}
Tobias~Holck Colding and William~P Minicozzi.
\newblock The singular set of mean curvature flow with generic singularities.
\newblock {\em Inventiones mathematicae}, 204(2):443--471, 2016.

\bibitem[CM22]{ChoiMantoulidis22AncientGradientFlows}
Kyeongsu Choi and Christos Mantoulidis.
\newblock Ancient gradient flows of elliptic functionals and morse index.
\newblock {\em American Journal of Mathematics}, 144(2):541--573, 2022.

\bibitem[CS21]{ChodoshSchulze21_ConicalShrinkerUniqueness}
Otis Chodosh and Felix Schulze.
\newblock Uniqueness of asymptotically conical tangent flows.
\newblock {\em Duke Math. J.}, 170(16):3601--3657, 2021.

\bibitem[DN18]{DruganNguyenShrinkingDonut}
Gregory Drugan and Xuan~Hien Nguyen.
\newblock Shrinking doughnuts via variational methods.
\newblock {\em The Journal of Geometric Analysis}, 28:3725--3746, 2018.

\bibitem[ES91]{EvansSpruck91}
Lawrence~C Evans and Joel Spruck.
\newblock Motion of level sets by mean curvature. i.
\newblock {\em Journal of Differential Geometry}, 33(3):635--681, 1991.

\bibitem[HK17]{HaslhoferKleiner17}
Robert Haslhofer and Bruce Kleiner.
\newblock Mean curvature flow of mean convex hypersurfaces.
\newblock {\em Communications on Pure and Applied Mathematics}, 70(3):511--546,
  2017.

\bibitem[HS99a]{HuiskenSinestrari99_Acta}
Gerhard Huisken and Carlo Sinestrari.
\newblock Convexity estimates for mean curvature flow and singularities of mean
  convex surfaces.
\newblock {\em Acta mathematica}, 183(1):45--70, 1999.

\bibitem[HS99b]{HuiskenSinestrari99_CVPDE}
Gerhard Huisken and Carlo Sinestrari.
\newblock Mean curvature flow singularities for mean convex surfaces.
\newblock {\em Calculus of Variations and Partial Differential Equations},
  8(1):1--14, 1999.

\bibitem[Hui84]{Huisken86}
Gerhard Huisken.
\newblock Flow by mean curvature of convex surfaces into spheres.
\newblock {\em J. Differential Geom.}, 20(1):237--266, 1984.

\bibitem[Hui90]{Huisken90}
Gerhard Huisken.
\newblock Asymptotic behavior for singularities of the mean curvature flow.
\newblock {\em Journal of Differential Geometry}, 31(1):285--299, 1990.

\bibitem[HW20]{HershkovitsWhite20_Nonfattening}
Or~Hershkovits and Brian White.
\newblock Nonfattening of mean curvature flow at singularities of mean convex
  type.
\newblock {\em Communications on Pure and Applied Mathematics}, 73(3):558--580,
  2020.

\bibitem[Ilm92]{Ilmanen92_LSF}
Tom Ilmanen.
\newblock Generalized flow of sets by mean curvature on a manifold.
\newblock {\em Indiana University mathematics journal}, pages 671--705, 1992.

\bibitem[Ilm94]{Ilmanen94_EllipReg}
Tom Ilmanen.
\newblock {\em Elliptic regularization and partial regularity for motion by
  mean curvature}, volume 520.
\newblock American Mathematical Soc., 1994.

\bibitem[Ilm95]{Ilmanen95_Sing2D}
Tom Ilmanen.
\newblock Singularities of mean curvature flow of surfaces.
\newblock {\em preprint}, 1995.

\bibitem[INS19]{IlmanenNevesSchulze19_network}
Tom Ilmanen, Andr\'{e} Neves, and Felix Schulze.
\newblock On short time existence for the planar network flow.
\newblock {\em J. Differential Geom.}, 111(1):39--89, 2019.

\bibitem[KKM18]{KapouleasKleeneMoller18_DesingShrinker}
Nikolaos Kapouleas, Stephen~James Kleene, and Niels~Martin M{\o}ller.
\newblock Mean curvature self-shrinkers of high genus: non-compact examples.
\newblock {\em Journal f{\"u}r die reine und angewandte Mathematik (Crelles
  Journal)}, 2018(739):1--39, 2018.

\bibitem[KMl14]{KM_rotational}
Stephen Kleene and Niels~Martin M\o~ller.
\newblock Self-shrinkers with a rotational symmetry.
\newblock {\em Trans. Amer. Math. Soc.}, 366(8):3943--3963, 2014.

\bibitem[Lee21]{lee2021compactness}
Tang-Kai Lee.
\newblock Compactness and rigidity of self-shrinking surfaces.
\newblock {\em arXiv preprint arXiv:2108.03919}, 2021.

\bibitem[Liu16]{Liu2016_index_shrinker}
Zihan~Hans Liu.
\newblock The index of shrinkers of the mean curvature flow.
\newblock {\em arXiv preprint arXiv:1603.06539}, 2016.

\bibitem[LS22]{Lin-Sun2022_bifurcation}
Zhengjiang Lin and Ao~Sun.
\newblock Bifurcation of perturbations of non-generic closed self-shrinkers.
\newblock {\em Journal of Topology and Analysis}, 14(04):979--999, 2022.

\bibitem[MF10]{MorganFong10_ricci_flow_book}
John~W Morgan and Frederick Tsz-Ho Fong.
\newblock {\em Ricci flow and geometrization of 3-manifolds}, volume~53.
\newblock American Mathematical Soc., 2010.

\bibitem[M{\o}l11]{Moller11_ClosedShrinker}
Niels~Martin M{\o}ller.
\newblock Closed self-shrinking surfaces in $\mathbb{R}^3$ via the torus.
\newblock {\em arXiv preprint arXiv:1111.7318}, 2011.

\bibitem[Ngu14]{Nguyen14_Shrinker}
Xuan~Hien Nguyen.
\newblock Construction of complete embedded self-similar surfaces under mean
  curvature flow, part iii.
\newblock {\em Duke Mathematical Journal}, 163(11):2023--2056, 2014.

\bibitem[OS88]{OsherSethian88}
Stanley Osher and James~A. Sethian.
\newblock Fronts propagating with curvature-dependent speed: algorithms based
  on {H}amilton-{J}acobi formulations.
\newblock {\em J. Comput. Phys.}, 79(1):12--49, 1988.

\bibitem[Sun23]{sun-generic-multi-1}
Ao~Sun.
\newblock Local entropy and generic multiplicity one singularities of mean
  curvature flow of surfaces.
\newblock {\em J. Differential Geom.}, 124(1):169--198, 2023.

\bibitem[SW09]{ShengWang09_SingMCF}
Weimin Sheng and Xu-Jia Wang.
\newblock Singularity profile in the mean curvature flow.
\newblock {\em Methods Appl. Anal.}, 16(2):139--155, 2009.

\bibitem[SW20]{SunWang2020compactness}
Ao~Sun and Zhichao Wang.
\newblock Compactness of self-shrinkers in r3 with fixed genus.
\newblock {\em Advances in Mathematics}, 367:107110, 2020.

\bibitem[SWZ20]{SunWangZhou20_MinmaxShrinker}
Ao~Sun, Zhichao Wang, and Xin Zhou.
\newblock Multiplicity one for min-max theory in compact manifolds with
  boundary and its applications.
\newblock {\em arXiv preprint arXiv:2011.04136}, 2020.

\bibitem[SX21a]{SunXue2021_initial_conical}
Ao~Sun and Jinxin Xue.
\newblock Initial perturbation of the mean curvature flow for asymptotical
  conical limit shrinker.
\newblock {\em arXiv preprint arXiv:2107.05066}, 2021.

\bibitem[SX21b]{SunXue2021_initial_closed}
Ao~Sun and Jinxin Xue.
\newblock Initial perturbation of the mean curvature flow for closed limit
  shrinker.
\newblock {\em arXiv preprint arXiv:2104.03101}, 2021.

\bibitem[Wan11]{WangXJ11_ConvexMCF}
Xu-Jia Wang.
\newblock Convex solutions to the mean curvature flow.
\newblock {\em Annals of mathematics}, pages 1185--1239, 2011.

\bibitem[Whi95]{White95_WSF_Top}
Brian White.
\newblock The topology of hypersurfaces moving by mean curvature.
\newblock {\em Communications in analysis and geometry}, 3(2):317--333, 1995.

\bibitem[Whi97]{White97_Stratif}
Brian White.
\newblock Stratification of minimal surfaces, mean curvature flows, and
  harmonic maps.
\newblock {\em Journal für die reine und angewandte Mathematik}, 488:1--36,
  1997.

\bibitem[Whi00]{White00}
Brian White.
\newblock The size of the singular set in mean curvature flow of mean-convex
  sets.
\newblock {\em Journal of the American Mathematical Society}, 13(3):665--695,
  2000.

\bibitem[Whi03]{White03}
Brian White.
\newblock The nature of singularities in mean curvature flow of mean-convex
  sets.
\newblock {\em Journal of the American Mathematical Society}, 16(1):123--138,
  2003.

\bibitem[Whi05]{White05_MCFReg}
Brian White.
\newblock A local regularity theorem for mean curvature flow.
\newblock {\em Annals of mathematics}, pages 1487--1519, 2005.

\bibitem[Whi09]{White09_CurrentsVarifolds}
Brian White.
\newblock Currents and flat chains associated to varifolds, with an application
  to mean curvature flow.
\newblock {\em Duke Mathematical Journal}, 148(1):41--62, 2009.

\end{thebibliography}
\end{document}